\DeclarePairedDelimiter\floor{\lfloor}{\rfloor}
\DeclarePairedDelimiter\ceil{\lceil}{\rceil}
\newcommand{\R}{\mathbb R}
\newcommand{\A}{\mathcal A}
\newcommand{\I}{\mathcal I}
\newcommand{\B}{\mathcal B}
\DeclareMathOperator*{\argmin}{arg\,min}
\DeclareMathOperator*{\E}{\mathbb{E}}
\newcommand{\Q}{\mathcal Q}
\newcommand{\Rc}{\mathcal R}
\newcommand{\Pc}{\mathcal P}
\newcommand{\J}{\mathcal J}
\newtheorem{assumption}{Assumption}
\newcommand{\mbf}{\mathbf}
\newtheorem{lemma}{Lemma}
\newtheorem{theorem}{Theorem}
\newtheorem{corollary}{Corollary}
\newtheorem{proposition}{Proposition}
\newtheorem{example}{Example}
\newtheorem{definition}{Definition}
\theoremstyle{remark}
\newtheorem{remark}{Remark}
\newcommand{\nbf}{\noindent\textbf}
\newcommand{\nit}{\noindent\textit}
\newcommand{\black}[1]{\textcolor{black}{#1}}
\title{Online Optimal Control with Linear Dynamics and Predictions: Algorithms and Regret Analysis}
\author{%
  Yingying Li \\
  SEAS\\
  Harvard University\\
 Cambridge, MA, 02138 \\
  \texttt{yingyingli@g.harvard.edu} \\
  \And
   Xin Chen \\
   SEAS \\
  Harvard University\\
  Cambridge, MA, 02138 \\
  \texttt{chen\_xin@g.harvard.edu}\\
   \And
   Na Li \\
   SEAS \\
  Harvard University\\
  Cambridge, MA, 02138 \\
   \texttt{nali@seas.harvard.edu} \\
}
\begin{document}

\maketitle
\begin{abstract}
	This paper studies the online optimal control problem with time-varying convex stage costs for a time-invariant linear dynamical system, where a finite lookahead window of accurate predictions of the stage costs are available at each time. We design online algorithms, Receding Horizon Gradient-based Control (RHGC), that utilize the predictions through finite steps of gradient computations. We study the algorithm performance measured by \textit{dynamic regret}: the online performance minus the optimal performance in hindsight. It is shown that the dynamic regret of RHGC decays exponentially with the size of the lookahead window. In addition, we provide a fundamental limit of the dynamic regret for any online algorithms by considering linear quadratic tracking problems. The regret upper bound of one RHGC method almost reaches the fundamental limit, demonstrating the effectiveness of the algorithm. Finally, we numerically test our algorithms for both linear and nonlinear systems to show the effectiveness and generality of our RHGC.
	 
\end{abstract}

\section{Introduction}

In this paper, we consider a $N$-horizon discrete-time sequential decision-making problem. At each time $t=0, \dots,N-1$, the decision maker observes a state $x_t$ of a dynamical system, receives a $W$-step lookahead window of future cost functions of states and control actions, i.e. $f_t(x)+g_t(u), \dots, f_{t+W-1}(x)+g_{t+W-1}(u)$, then decides the control input $u_t$ which drives the system to a new state $x_{t+1}$ following some known dynamics. For simplicity, we consider a linear time-invariant (LTI) system $x_{t+1}=Ax_t+Bu_t$ with $(A,B)$ known in advance. The goal is to minimize the overall cost over the $N$ time steps. This problem enjoys many applications in, e.g. data center management \cite{lazic2018data,xu2006predictive},  robotics \cite{baca2018model}, autonomous driving \cite{rios2016survey,kim2014mpc}, energy systems \cite{kouro2008model},  manufacturing \cite{perea2003model,wang2007model}.
Hence, there has been a growing interest on the problem, from both control and online optimization communities.

In the control community, studies on the above problem focus on economic model predictive control (EMPC), which is a variant of model predictive control (MPC) with a primary goal on optimizing economic costs \cite{diehl2010lyapunov,muller2017economic,ellis2014tutorial,ferramosca2010economic,ellis2014economic,angeli2016theoretical,amrit2011economic,grune2013economic}. 
Recent years have seen a lot of attention on the optimality performance analysis of EMPC, under both time-invariant costs \cite{angeli2012average,grune2014asymptotic,grune2015non} and time-varying costs \cite{ferramosca2014economic,ferramosca2010economic,angeli2016theoretical,grune2017closed,grune2018economic}. However, most studies focus on asymptotic performance and there is still limited understanding on the non-asymptotic performance, especially under time-varying costs. Moreover, for computationally efficient algorithms, e.g. suboptimal MPC and inexact MPC  \cite{zeilinger2011real,wang2010fast,graichen2010stability,eichler2017optimal}, there is limited work on the optimality performance guarantee.

In online optimization, on the contrary, there are many papers on the non-asymptotic performance analysis, where the performance is usually measured by regret, e.g., static regrets\cite{hazan2016introduction,shalev2012online}, dynamic regrets\cite{jadbabaie2015online}, etc., but most work does not consider predictions and/or dynamical systems. Further, motivated by the applications with predictions, e.g. predictions of electricity prices in data center management problems \cite{lin2013dynamic,lin2012online}, there is a growing interest on  the effect of predictions on the online problems \cite{rakhlin2013online,chen2015online,lin2013dynamic,badiei2015online,lin2012online,chen2016using,li2018online}.
However, though some papers consider switching costs which can be viewed as a simple and special dynamical model \cite{goel2019online,li2018online}, there is a lack of study on the general dynamical systems and on how predictions affect the online problem with dynamical systems.

In this paper, we propose novel gradient-based online control algorithms, receding horizon gradient-based control (RHGC), and provide nonasymptotic optimality guarantees by dynamic regrets. RHGC can be based on many gradient methods, e.g.  vanilla gradient descent, Nesterov's accelerated gradient, triple momentum, etc., \cite{nesterov2013introductory,van2017fastest}. Due to the space limit, this paper only presents  receding horizon gradient descent (RHGD) and  receding horizon triple momentum (RHTM). For the theoretical  analysis, we assume strongly convex and smooth cost functions, whereas applying  RHGC does not require these conditions. Specifically, we show that  the regret bounds of RHGD and RHTM decay exponentially with the prediction window's size $W$, demonstrating that our algorithms efficiently utilize the prediction. Besides, our regret bounds  decrease when the system is more ``agile'' in the sense of a controllability index \cite{luenberger1967canonical}. Further, we provide a fundamental limit for any online control algorithms and show that the fundamental lower bound almost matches the regret upper bound of RHTM. This indicates that  RHTM achieves near-optimal performance at least in the worst case. We also provide some discussion on the classic linear quadratic tracking problems, a widely studied control problem in literature, to provide more insightful interpretations of our results. Finally, we numerically test our algorithms. In addition to linear systems, we also apply  RHGC to a nonlinear dynamical system: path tracking by a two-wheeled robot. Results show that  RHGC works effectively for nonlinear systems though RHGC is only presented and theoretical analyzed on LTI systems. 

Results in this paper are  built on a paper on online optimization with switching costs \cite{li2018online}. Compared with \cite{li2018online}, this paper studies online optimal control with \textit{general linear dynamics}, which includes  \cite{li2018online} as a special case; and studies how the system controllability index affects the regrets.


   There has been some recent work  on online  optimal  control problems with time-varying costs \cite{abbasi2014tracking,cohen2018online,goel2019online,agarwal2019online} and/or time-varying disturbances \cite{agarwal2019online},  but most papers  focus on the no-prediction cases. As we show later in this paper, these  algorithms can be used in our RHGC methods as initialization oracles. Moreover, our regret analysis shows that RHGC can reduce the regret of these no-prediction online algorithms by a factor exponentially decaying with the prediction window's size.

Finally, we would like to mention another related line of work:  learning-based control \cite{dean2017sample,dean2018regret,tu2017least,vamvoudakis2010online,ouyang2017learning}. In some sense, the results in this paper are orthogonal to that of the learning-based control, because the learning-based control usually considers a time-invariant environment but unknown dynamics,  and aims to learn system dynamics or optimal controllers by data; while this paper considers a time-varying scenario with known dynamics but changing objectives and studies decision making with limited predictions. It is an interesting future direction to combine the two lines of work for designing more applicable algorithms.

\nbf{Notations.}
Consider matrices $A$ and $B$, $A\geq B$ means $A-B$ is positive semidefinite and $[A,B]$ denotes a block matrix. The norm $\|\cdot\|$ refers to the $L_2$ norm for both vectors and matrices. Let $x^i$ denote the $i$th entry of the vector. Consider a set $\I=\{k_1, \dots, k_m \}$, then $x^\I=(x^{k_1},\dots, x^{k_m})^\top$, and $A(\I, :)$ denotes the $\I$ rows of matrix $A$ stacked together. Let $I_m$ be an identity matrix in $\R^{m \times m}$. 









\section{Problem formulation and preliminaries}







Consider a finite-horizon discrete-time optimal control problem with time-varying cost functions $f_t(x_t)+g_t(u_t)$ and a linear time-invariant (LTI) dynamical system:
{\small
\begin{equation}\label{equ: control problem}
\begin{aligned}
\min_{\mathbf x,\mathbf u}\quad& J(  \mathbf x,   \mathbf u)= \sum_{t=0}^{N-1} \left[ f_t(x_t)+   g_t(u_t) \right] + f_N(x_N)\\
\text{s.t. } \ & \ x_{t+1}=  A   x_t +   B   u_t, \quad t\geq 0
\end{aligned}
\end{equation}}
where  $x_t \in \R^n$, $u_t \in \R^m$, $\mathbf x=(x^\top_1, \dots, x^\top_N)^\top$, $\mathbf u=(u^\top_0,\dots, u^\top_{N-1})^\top$, $x_0$ is given,   $f_N(x_N)$ is the terminal cost.\footnote{The results in this paper can be extended to cost $c_t(x_t,u_t)$ with proper assumptions.}  To solve the optimal control problem  \eqref{equ: control problem},  all  cost functions from $t=0$ to $t=N$ are needed. However, at each time $t$, usually only a finite lookahead window of cost functions are available and the decision maker needs to make an online decision $u_t$ using the available information. 

In particular, we consider a simplified prediction model: at each time $t$, the decision maker obtains accurate predictions for the next $W$ time steps, $f_t,g_t, \dots, f_{t+W-1}, g_{t+W-1}$, but no further prediction beyond these $W$ steps, meaning that  $f_{t+W},g_{t+W},\dots$ can even be adversarially generated. Though this prediction model may be too optimistic in the short term and over pessimistic in the long term, this model i) captures a commonly observed phenomenon in predictions that short-term predictions are usually much more accurate than the long-term predictions; ii) allows researchers to derive insights for the role of predictions and possibly to extend to more complicated cases \cite{lin2012online,lin2013dynamic,lu2013online,borodin1992optimal}. 

The online optimal control problem is described as  follows: at each time step $t=0,1,\dots$, 
\begin{itemize}
		\item the agent observes state $x_t$ and receives prediction $f_t,\ g_t, \, \dots, \ f_{t+W-1},\ g_{t+W-1}$;
		\item the agent decides and implements a control $u_t$
		and  suffers the  cost $f_t(x_t)+   g_t(u_t) $;
		\item the system evolves to the next state $x_{t+1}=Ax_t + Bu_t$.\footnote{We assume known $A, B$, no process noises, state feedback, and leave relaxing assumptions as future work.}
\end{itemize}
	An online control algorithm, denoted as $\A$, can be defined as a mapping from the prediction information and the history information to the control action, denoted by $u_t(\mathcal{A})$:
	\begin{equation}\label{equ: online alg A for u}
	u_t(\A)=\A(   x_t(\A), \dots,   x_0(\A), \{  f_s,   g_s\}_{s=0}^{t+W-1}),\quad t\geq 0,
	\end{equation}
	where $x_t(\A)$ is the state generated by implementing $\A$ and $x_0(\A)=x_0$ is given.
	
This paper evaluates the performance of online control algorithms by comparing against the optimal control cost $J^*$ in hindsight, that is, $
J^* \coloneqq \min\{ J(  \mathbf x,  \mathbf u)\mid   x_{t+1}=Ax_t+Bu_t, \ \forall\, t \geq 0 \}$.

In this paper, the performance of an online algorithm $\A$    is measured by \footnote{The optimality gap depends on initial state $x_0$ and $\left\{f_t,g_t\right\}_{t=0}^{N}$, but we omit them for  simplicity of notation.}
\begin{equation}
\text{Regret}(\mathcal{A}):= J(\mathcal A)-J^* = J(\mathbf x(\A), \mathbf u(\A))-J^*,
\end{equation}
which is sometimes called as \textit{dynamic regret} \cite{jadbabaie2015online, mokhtari2016online} or \textit{competitive difference} \cite{andrew2013tale}. 
Another popular regret notion is the static regret, which compares the online performance with the optimal static controller/policy \cite{cohen2018online,abbasi2014tracking}. The benchmark in static regret is weaker than that in dynamic regret because the optimal controller may be far from being static, and it has been shown in literature that $o(N)$ static regret can be achieved even without predictions (i.e., $W=0$). Thus, we will focus on the dynamic regret analysis and study how predictions can improve the dynamic regret. 

\begin{example}[Linear quadratic (LQ) tracking]\label{example: LQT}
	Consider a discrete-time  tracking problem for a system
	$  x_{t+1}=  A   x_t +   B   u_t$.
	The goal is to minimize the quadratic tracking loss of a trajectory $\{\theta_t\}_{t=0}^N$ 
{\small\begin{align*}
	&  J(  \mathbf x,   \mathbf u)=\frac{1}{2} \sum_{t=0}^{N-1} \left[ (  x_t-  \theta_t)^\top   Q_t (  x_t-  \theta_t)+    u_t^\top   R_t u_t\right] + \frac{1}{2} (  x_{N}-  \theta_{N})^\top   Q_{N}(  x_{N}-  \theta_{N}).
	\end{align*} }
	In practice, it is usually difficult to know the complete trajectory $\{\theta_t\}_{t=0}^N$ a priori, what are revealed are usually the next few steps, making it an online control problem with predictions.


\end{example}

\textbf{Assumptions and  useful concepts.}
Firstly, we assume controllability, which is standard in control theory and roughly means that the system can be steered to any state by proper control inputs \cite{hespanha2018linear}.

\begin{assumption}\label{ass: controllable}
	The LTI system $x_{t+1}=Ax_t+Bu_t$ is controllable.
\end{assumption}
It is well-known that any controllable LTI system can be linearly transformed to a canonical form \cite{luenberger1967canonical} and the linear transformation can be computed efficiently a priori using $A$ and $B$, which can further be used to reformulate the cost functions $f_t, g_t$. Thus, without loss of generality, this paper only considers LTI systems in the canonical form, defined as follows.
\begin{definition}[Canonical form]\label{def: canonical form}
A system $x_{t+1}=Ax_t+Bu_t$ is said to be in the canonical form if 
{\small	\begin{align*}
	& A=\left[
	\begin{smallmatrix}
	0  					&	1 		& 	          & 0 &                      \\
	\vdots		 & \ddots & \ddots  & \\
	  		 & 		      & 	0		& 1		& \\
	*   & * & 		\cdots	 & * &  *& *& 	\dots		& 	*			&	& 		&\cdots	&& *\\
	&			  &				&  					& 0							 & 1        			&&0	& 		   	 	&	&			 						 & 					\\
	&			  &				&  						&  			\vdots	 & \ddots 				& 	\ddots	   	&&			&& 			&			  					\\
	&			  &				&  						&  				&   				         & 	           	   0	&			 1	&		&&		 & 	&	\\
	*  & * & 	\cdots		 &			*			&  *  & 		*				   & 	\cdots		& *  & 	\cdots	&*&\cdots&	&*\\
	\cdots     &			  &				&  						&&			 				&  				&		& \cdots    \\
	&           &&&&       &  &&& 0&1& \cdots&0\\
	&           &&&&       &  &&&\vdots &\ddots& \ddots&\\
	&           &&&&     &  &&& && 0&1\\
	*&   *        &\cdots&*&*&  *  &  \cdots &*&\cdots & *&*& \cdots& *
	\end{smallmatrix}
	\right], \quad B= \left[
	\begin{smallmatrix}
	0 			& 0 		&\dots \\
	\vdots  & \vdots & \vdots\\
	0 			& 		&\\
	1 			&  0 		& \cdots \\
	0 			&0& \\
	\vdots & \vdots& \cdots \\
	& & \\
	0 			& 1     & \cdots\\
	\cdots  &  &\cdots \\
	0 & \cdots &  \cdots\\
	\vdots & \ddots\\
	0 & 0 &  \cdots1
	\end{smallmatrix}
	\right],
	\end{align*}}
	where each * represents a (possibly) nonzero entry, and the rows of $B$ with $1$ are the same rows of $A$ with * and the indices of these rows are denoted as $\{k_1, \dots, k_m\}\eqqcolon \I$. 
	Moreover, let $p_i = k_i -k_{i-1}$ for  $ 1 \leq i \leq m$, where $k_0=0$. The \textup{controllability index} of a canonical-form $(A,B)$ is defined as $$	p = \max\{p_1, \dots, p_m \}.$$
 \end{definition}
Next, we introduce assumptions on the cost functions and their optimal solutions. 
\begin{assumption}\label{ass: general Qt, Rt factors}
Assume $f_t$ is $\mu_f$ strongly convex and $l_f$ Lipschitz smooth for  $0\leq t \leq N$, and $g_t$ is { convex} and $l_g$ Lipschitz smooth for $0 \leq t \leq N-1$ for some $\mu_f,   l_f, l_g>0$. 
	
	\end{assumption}
	
	\begin{assumption}\label{ass: theta xi bdd}Assume the minimizers to $f_t, g_t$, denoted as $\theta_t = \argmin_x f_t(x),\ \xi_t =  \argmin_u g_t(u)$, are uniformly bounded,  i.e. there exist $\bar \theta, \bar \xi$ such that $\| \theta_t \|\leq \bar \theta$, $\| \xi_t \| \leq \bar \xi, \ \forall\, t$.

\end{assumption}

These assumptions are commonly adopted in convex analysis. The uniform bounds  rule out extreme cases. Notice  that the LQ tracking problem in Example 1 satisfies  Assumption \ref{ass: general Qt, Rt factors} and \ref{ass: theta xi bdd} if $Q_t, R_t$ are positive definite with uniform bounds on the eigenvalues and if $\theta_t$ are uniformly bounded for all $t$. 


\section{Online control algorithms: receding horizon gradient-based control}
\label{sec: RHTM}


This section introduces our online control algorithms, receding horizon gradient-based control (RHGC). The design is by first converting the online control problem to an equivalent online optimization problem with \textit{finite temporal-coupling} costs, then designing gradient-based online optimization algorithms by utilizing this finite temporal-coupling property.

\subsection{Problem transformation}\label{sec: problem transform}

Firstly, we notice that the offline optimal control problem \eqref{equ: control problem} can be viewed as an optimization with equality constraints over $\mbf x$ and $\mbf u$. The individual stage cost $f_t(x_t)+g_t(u_t)$ only depends on the current $x_t$ and $u_t$ but the equality constraints couple $x_t$, $u_t$ with $x_{t+1}$ for each $t$. In the following, we will rewrite \eqref{equ: control problem} in an equivalent form of an \textit{unconstrained} optimization problem on some entries of $x_t$ for all $t$, but the new stage cost at each time $t$ will depend on these new entries across a few nearby time steps. We will harness this structure to design our online algorithm.  

In particular, the entries of $x_t$ adopted in the reformulation are: $x_t^{k_1},\dots, x_t^{k_m}$, where $\I=\{k_1, \dots, k_m\}$ is defined in Definition \ref{def: canonical form}. For ease of notation, we define 
\begin{equation} \label{equ: zt_def}
    z_t\coloneqq (x_t^{k_1},\dots, x_t^{k_m})^\top, \quad t\geq 0
\end{equation} 
and write $z_t^j=x_t^{k_j}$ where $j=1,\dots,m$. Let $\mbf z\coloneqq (z^\top_{1},\dots, \dots, z^\top_N)^\top$. By the  canonical-form equality constraint $x_{t}=Ax_{t-1}+Bu_{t-1}$, we have $x_{t}^i=x_{t-1}^{i+1}$ for $i\not \in \I$, so $x_t$ can be represented by $z_{t-p+1},\dots, z_t$ in the following way:
\begin{equation}\label{equ: xt's representation by z}
    x_t=(\underbrace{z_{t-p_1+1}^1, \dots, z_{t}^1}_{p_1}, \underbrace{z_{t-p_2+1}^2, \dots, z_t^2}_{p_2}, \dots, \underbrace{z_{t-p_m+1}^m, \dots, z_t^m}_{p_m})^\top, \quad t\geq0,
\end{equation}
where $z_t$ for $t\leq 0$ is determined by $x_0$ in a way to let~\eqref{equ: xt's representation by z} hold for $t=0$. For  ease of  exposition and without loss of generality,  we consider $x_0=0$ in this paper; then we have $z_t=0$ for $t\leq 0$.
Similarly, $u_t$ can be determined by $z_{t-p+1}, \dots, z_t, z_{t+1}$ by 
\begin{equation}\label{equ: ut= zt+1- mr Axt}
    u_t = z_{t+1}- A(\I,:) x_t= z_{t+1}-A(\I,:)(z_{t-p_1+1}^1, \dots, z_{t}^1,  \dots, z_{t-p_m+1}^m, \dots, z_t^m)^\top, \quad t\geq 0
\end{equation} 
where $A(\I,:)$ consists of $k_1, \dots, k_m$ rows of $A$. 

It is straightforward to verify that equations (\ref{equ: zt_def}, \ref{equ: xt's representation by z}, \ref{equ: ut= zt+1- mr Axt}) describe a bijective transformation between $\{(\mbf x, \mbf u)\mid x_{t+1}=Ax_t +Bu_t\}$  and $\mbf z \in \R^{mN}$, since the LTI constraint $x_{t+1}=Ax_t +Bu_t$ is naturally embedded in the relation (\ref{equ: xt's representation by z}, \ref{equ: ut= zt+1- mr Axt}). Therefore, based on the transformation, an optimization problem with respect to $\mbf z \in \R^{m N}$ can be designed to be equivalent with \eqref{equ: control problem}. Notice that the resulting optimization problem  has no constraint on $\mbf z$. Moreover, the cost functions on $\mbf z$ can be obtained by substituting  (\ref{equ: xt's representation by z}, \ref{equ: ut= zt+1- mr Axt}) into $f_t(x_t)$ and $g_t(u_t)$, i.e. $\tilde{f}_t(z_{t-p+1}, \dots, z_t)\coloneqq f_t(x_t)$ and $\tilde{g}_t(z_{t-p+1}, \dots, z_t, z_{t+1})\coloneqq g_t(u_t)$. 
Correspondingly, the objective function of the equivalent optimization with respect to $\mbf z$ is
\begin{align}\label{equ: def C(z)}
& C(\mathbf z)\coloneqq \sum_{t=0}^{N} \tilde f_t(z_{t-p+1},\dots, z_t)+\sum_{t=0}^{N-1} \tilde g_t(z_{t-p+1},\dots,z_{t+1})
\end{align}
 $C(\mbf z)$ has many nice properties, some of which are formally stated  below. 
 
\begin{lemma}\label{lem: C(z) properties}The function $C(\mbf z)$ has the following properties:
\vspace{-4pt}
\begin{enumerate}
\itemsep-1pt
    \item[i)] $C(\mbf z)$ is $\mu_c=\mu_f$ strongly convex and $l_c$ smooth for  $l_c= pl_f + (p+1)l_g \|[I_m,-A(\I,:)]\|^2$.
    \item[ii)] For any $ (\mbf x,\mbf u)$ s.t. {$x_{t+1}= Ax_t+B u_t$}, $C(\mbf z)=J(\mbf x,\mbf u)$ where $\mbf z$ is defined in (\ref{equ: zt_def}). Conversely,   $ \forall\,\mbf z$, the $(\mbf x, \mbf u)$ determined by (\ref{equ: xt's representation by z},\ref{equ: ut= zt+1- mr Axt}) satisfies   $x_{t+1}= A x_t+ B u_t$ and $J(\mbf x,\mbf u)=C(\mbf z)$;
    \item[iii)] Each stage cost $\tilde{f}_t+\tilde{g}_t$ in (\ref{equ: def C(z)}) only depends on $z_{t-p+1},\ldots,z_{t+1}$. 
\end{enumerate}
\end{lemma}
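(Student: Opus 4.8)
The plan is to dispatch the three claims in order of increasing difficulty, with (iii) immediate, (ii) a bijection/bookkeeping check, and (i) the substantive part. Claim (iii) follows straight from the definitions: the representation \eqref{equ: xt's representation by z} writes $x_t$ as a function of $z_{t-p_j+1},\dots,z_t$ over $j=1,\dots,m$, and since every $p_j\le p$ these indices all lie in $\{t-p+1,\dots,t\}$, so $\tilde f_t(z_{t-p+1},\dots,z_t)=f_t(x_t)$ is well defined on those $p$ blocks; then \eqref{equ: ut= zt+1- mr Axt} shows $u_t$ depends on $x_t$ and on $z_{t+1}$, so $\tilde g_t=g_t(u_t)$ depends on $z_{t-p+1},\dots,z_{t+1}$, and hence $\tilde f_t+\tilde g_t$ depends only on these $p+1$ blocks.

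For (ii) I would first verify that \eqref{equ: zt_def}, \eqref{equ: xt's representation by z} and \eqref{equ: ut= zt+1- mr Axt} set up a bijection, using the block structure in Definition~\ref{def: canonical form}. Given $(\mbf x,\mbf u)$ with $x_{t+1}=Ax_t+Bu_t$: for each row $i\notin\I$ the $i$-th row of $A$ has its single nonzero entry ($=1$) in column $i+1$ and the $i$-th row of $B$ vanishes, so $x_{t+1}^i=x_t^{i+1}$; iterating this shift identity downward from the rows $k_j\in\I$, where $x_t^{k_j}=z_t^j$ by definition of $z_t$, reconstructs exactly \eqref{equ: xt's representation by z} (the convention $x_0=0$, hence $z_s=0$ for $s\le 0$, handles the boundary), and for $k_j\in\I$ the relation $x_{t+1}^{k_j}=A(k_j,:)x_t+u_t^j$ rearranges to \eqref{equ: ut= zt+1- mr Axt}. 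Conversely, given any $\mbf z\in\R^{mN}$, define $x_t,u_t$ by \eqref{equ: xt's representation by z}--\eqref{equ: ut= zt+1- mr Axt} and check $x_{t+1}=Ax_t+Bu_t$ row by row: rows $i\notin\I$ reduce to the shift identity $x_{t+1}^i=x_t^{i+1}$, which holds by construction, and rows $k_j\in\I$ give $A(k_j,:)x_t+u_t^j=z_{t+1}^j=x_{t+1}^{k_j}$. In both directions, since $\tilde f_t,\tilde g_t$ were \emph{defined} as $f_t(x_t),g_t(u_t)$ under this substitution, summing over $t$ in \eqref{equ: def C(z)} gives $C(\mbf z)=J(\mbf x,\mbf u)$.

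For the strong convexity in (i), I would show $C(\mbf z)-\tfrac{\mu_f}{2}\|\mbf z\|^2$ is convex. Since $x_0=0$, the maps $\mbf z\mapsto x_t(\mbf z)$ and $\mbf z\mapsto u_t(\mbf z)$ from the transformation are linear, and $\|\mbf z\|^2=\sum_{t=1}^N\|z_t\|^2=\sum_{t=1}^N\|P x_t(\mbf z)\|^2$, where $P$ is the coordinate projection onto the entries indexed by $\I$. For each $t\ge 1$,
\[
f_t(x)-\tfrac{\mu_f}{2}\|Px\|^2=\Big(f_t(x)-\tfrac{\mu_f}{2}\|x\|^2\Big)+\tfrac{\mu_f}{2}\|(I-P)x\|^2
\]
is a sum of two convex functions of $x$ (the first by $\mu_f$-strong convexity of $f_t$), hence convex, and composing with the linear $x_t(\mbf z)$ keeps it convex; each $g_t(u_t(\mbf z))$ is convex for the same reason. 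Thus $C(\mbf z)-\tfrac{\mu_f}{2}\|\mbf z\|^2=f_0(0)+\sum_{t=1}^N\big(f_t(x_t(\mbf z))-\tfrac{\mu_f}{2}\|Px_t(\mbf z)\|^2\big)+\sum_{t=0}^{N-1}g_t(u_t(\mbf z))$ is convex, i.e.\ $C$ is $\mu_f$-strongly convex.

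For the smoothness constant I would first bound each stage term: writing $\zeta_t:=(z_{t-p+1}^\top,\dots,z_t^\top)^\top$, the representation \eqref{equ: xt's representation by z} reads $x_t=S_t\zeta_t$ with $S_t$ a $0$/$1$ selection matrix having orthonormal rows, so $\|S_t\|=1$ and $\tilde f_t$ is $l_f$-smooth in $\zeta_t$; and $u_t=[I_m,-A(\I,:)](z_{t+1}^\top,x_t^\top)^\top$ is a linear image of $(\zeta_t,z_{t+1})$ under a map of operator norm at most $\|[I_m,-A(\I,:)]\|$, so $\tilde g_t$ is $l_g\|[I_m,-A(\I,:)]\|^2$-smooth. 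Summing naively over $t$ gives a useless $O(N)$ bound; the key point is that $\tilde f_t$ only acts on the $p$ blocks $z_{t-p+1},\dots,z_t$, so indices congruent $\bmod\, p$ have disjoint blocks. Grouping $\sum_{t=0}^N\tilde f_t$ into the $p$ residue classes mod $p$, each class is a sum of disjoint-support $l_f$-smooth functions, hence itself $l_f$-smooth (the gradient increments are orthogonal, so their norms add by Pythagoras), whence $\sum_t\tilde f_t$ is $p\,l_f$-smooth; likewise $\tilde g_t$ spans the $p+1$ blocks $z_{t-p+1},\dots,z_{t+1}$, and grouping mod $p+1$ makes $\sum_t\tilde g_t$ be $(p+1)l_g\|[I_m,-A(\I,:)]\|^2$-smooth. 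Adding gives $l_c=p\,l_f+(p+1)l_g\|[I_m,-A(\I,:)]\|^2$. The main obstacle is exactly this last step: a crude Lipschitz bound on $\nabla C$ is trivial, but obtaining the sharp constants $p$ and $p+1$ requires noticing that each coordinate block $z_s$ is touched by precisely $p$ of the $\tilde f_t$'s and $p+1$ of the $\tilde g_t$'s (the residue-class decomposition); the canonical-form bookkeeping in (ii) is routine but must be done carefully row by row.
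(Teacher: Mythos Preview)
Your proposal is correct and follows essentially the same route as the paper: parts (ii)--(iii) are bookkeeping from the canonical form, strong convexity uses $\|z_t\|\le\|x_t\|$ (since $z_t$ is a subvector of $x_t$) to transfer $\mu_f$ from $f_t$ to $C$, and the smoothness constant comes from the fact that each block $z_s$ enters at most $p$ of the $\tilde f_t$'s and $p{+}1$ of the $\tilde g_t$'s. Your residue-class grouping is just a clean repackaging of the paper's direct overlap count in the summed descent-lemma inequalities (the paper bounds each remainder by $\tfrac{l_f}{2}\sum_{s=t-p+1}^t\|z'_s-z_s\|^2$ and then counts how many times each $s$ appears), so the underlying argument is identical.
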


Property ii) implies that any online algorithm for deciding $\mbf z$ can be translated to an online algorithm for $\mbf x$ and $\mbf u$ by (\ref{equ: xt's representation by z}, \ref{equ: ut= zt+1- mr Axt}) with the same costs.
Property iii) highlights one nice property, finite temporal-coupling, of $C(\mbf z)$, which serves as a foundation for our online algorithm design.  



\begin{example}\label{example: p=2 n=2} For illustration,  
consider the following dynamical system with $n=2,\ m=1$:
{\small	\begin{align}\label{equ: example n=2}
 \left[
	\begin{array}{c}
	x_{t+1}^1\\
    x_{t+1}^2
	\end{array}
	\right]= \left[
	\begin{array}{cc}
	0 & 1\\
	a_1 & a_2
	\end{array}
	\right] \left[
	\begin{array}{c}
	x_{t}^1\\
    x_{t}^2
	\end{array}
	\right]+ 
	\left[
	\begin{array}{c}
	0\\
	1
	\end{array}
	\right]u_{t}
	\end{align}}
	Here, $k_1=2$, $\mathcal{I}=\left\{2\right\}$, $A(\mathcal{I},:)=(a_1, a_2)$, and $z_t=x_t^2$.  By \eqref{equ: example n=2},
 $x_{t}^1= x_{t-1}^2$ and $x_t=(z_{t-1},z_t)^\top$. Similarly, $u_t=x^2_{t+1}- A(\mathcal{I},:) x_t=z_{t+1}- A(\mathcal{I},:)(z_{t-1},z_t)^\top$. Hence,  $\tilde{f}_t(z_{t-1}, z_t)= f_t(x_t)=f_t((z_{t-1},z_t)^\top)$, $\tilde {g}_t(z_{t-1},z_t, z_{t+1})= g_t(u_t)=g_t(z_{t+1}- A(\mathcal{I},:) (z_{t-1},z_t)^\top )$.  
\end{example}
\begin{remark} This paper considers a  reparameterization method with respect to states $\mbf x$ via the canonical form, and it might be interesting to compare it with the  more direct reparameterization with respect to control inputs $\mbf u$. The control-based reparameterization has been discussed in literature \cite{richter2011computational}. It has been observed in \cite{richter2011computational} that when $A$ is not stable, the condition number of the cost function derived from the control-based reparameterization goes to infinity as $W\to +\infty$, which may result in computation issues when $W$ is large. However, the state-based reparameterization considered in this paper can guarantee bounded condition number for all $W$ even for unstable $A$, as shown in Lemma \ref{lem: C(z) properties}. This is one major advantage of the state-based reparameterization method considered in this paper.
\end{remark}

\subsection{Online algorithm design: RHGC}\label{sec: rhgm}
This section introduces our RHGC  based on the reformulation (\ref{equ: def C(z)}) and inspired by  \cite{li2018online}. As mentioned earlier, any online algorithm for $z_t$ can be translated to  an online algorithm for $x_t,u_t$. Hence, we will focus on designing an online algorithm for $z_t$ in the following. By the finite temporal-coupling property of $C(\mbf z)$, the partial gradient of the \textit{total cost} $C(\mbf z)$ only depends on the finite neighboring stage costs $\{\tilde{f}_\tau, \tilde{g}_{\tau}\}_{\tau=t}^{t+p-1}$ and finite neighboring stage variables $(z_{t-p},\ldots,z_{t+p})=:z_{t-p:t+p}$. 
{\small\begin{align*}
&\frac{\partial C}{\partial z_t}(\mbf z)= \sum_{{\tau}=t}^{t+p-1} \frac{\partial \tilde f_{\tau}}{\partial z_t}(z_{{\tau}-p+1},\dots, z_{\tau}) + \sum_{{\tau}=t-1}^{t+p-1} \frac{\partial \tilde g_{\tau}}{\partial z_t}(z_{{\tau}-p+1},\dots, z_{{\tau}+1})
\end{align*}}
Without causing any confusion, we use $\frac{\partial C}{\partial z_{t}}(z_{t-p:t+p})$ to denote $\frac{\partial C}{\partial z_t}(\mbf z)$ for highlighting the local dependence. Thanks to the local dependence, despite  the fact that not all the future costs are available, it is still possible to compute the partial gradient of the total cost by using only a finite lookahead window of the cost functions. This observation motivates the design of our receding horizon gradient-based control (RHGC) methods, which are the online implementation of gradient methods, such as  vanilla gradient descent, Nesterov's accelerated gradient, triple momentum, etc., \cite{nesterov2013introductory,van2017fastest}. 


\begin{algorithm}
\caption{\black{Receding Horizon Gradient Descent (RHGD)}}
	\label{alg:RHGD-LQT}
	\begin{algorithmic}[1]
		\STATE \textbf{inputs:} Canonical form $(A,B)$, $W\geq 1$, $K= \floor{\frac{W-1}{p}}$, stepsize $\gamma_g$, initialization oracle $\varphi$.
		\FOR{$t=1-W:N-1$}
		\STATE  \textit{Step 1:} initialize $z_{t+W}(0)$ by  oracle $\varphi$.
		
		\FOR{$j=1,\dots, K$}
		\STATE \textit{Step 2:} update $z_{t+W-jp}(j)$ by gradient descent
$z_{t+W-jp}(j)=  z_{t+W-jp}(j-1)-\gamma_g \frac{\partial C}{\partial z_{t+W-jp}}(z_{t+W-(j+1)p:t+W-(j-1)p}(j-1))
$.
		\ENDFOR
		\STATE \textit{Step 3:} compute $u_t$ by $z_{t+1}(K)$ and the observed state $x_t$:
		$u_t = z_{t+1}(K) - A(\I, :) x_t$
		\ENDFOR
	\end{algorithmic}
\end{algorithm}

\black{Firstly, we illustrate the main idea of RHGC by receding horizon gradient descent (RHGD) based on vanilla gradient descent. In RHGD (Algorithm \ref{alg:RHGD-LQT}), index $j$ refers to the iteration number of the corresponding gradient update of $C(\mbf z)$. There are two major steps to decide $z_t$. Step 1 is initializing the decision variables $\mbf z(0)$. Here, we do not restrict the initialization algorithm $\varphi$ and allow any oracle/online algorithm without using lookahead information, i.e.  $z_{t+W}(0)$ is selected based only on the information up to $t+W-1$: $z_{t+W}(0)=\varphi(\{\tilde f_s, \tilde g_s \}_{s=0}^{t+W-1})$.
One example of $\varphi$ will be provided in Section 4. Step 2 is using the $W$-lookahead costs to conduct gradient updates. Notice that the gradient update from $z_{\tau}(j-1)$ to $z_{\tau}(j)$ is implemented in a backward order of $\tau$, i.e. from $\tau=t+W$ to $\tau=t$. Moreover, since the partial gradient  $\frac{\partial C}{\partial z_t}$ requires the local decision variables $z_{t-p:t+p}$, given $W$-lookahead information,  RHGD can only conduct $K=\floor{\frac{W-1}{p}}$ iterations of gradient descent for the total cost  $C(\mbf z)$. For more  discussion,  we  refer the reader to \cite{li2018online} for the $p=1$ case. }

In addition to RHGD, RHGC can also incorporate accelerated gradient methods in the same way, such as Nesterov's accelerated gradient and triple momentum. For the space limit, we only formally present receding horizon triple momentum (RHTM) in Algorithm \ref{alg:RHTM-LQT} based on triple momentum \cite{van2017fastest}. 
RHTM also consists of two major steps when determining $z_t$: initialization and gradient updates based on the lookahead window. The two major differences from RHGD are that the decision variables in RHTM include not only $ z(j)$ but also auxiliary variables $ \omega(j)$ and $  y(j)$, which are adopted in triple momentum to accelerate the convergence, and  that the gradient update  is by triple momentum instead of gradient descent. Nevertheless, RHTM can also conduct $K=\floor{\frac{W-1}{p}}$ iterations of triple momentum for  $C(\mbf z)$ since the triple momentum update requires the same neighboring cost functions.

Though it appears that RHTM does not fully exploit the lookahead information since only a few gradient updates are used, in Section~\ref{sec:LQT}, we show that RHTM  achieves near-optimal performance with respect to $W$, which means that RHTM successfully extracts and utilizes the prediction information. 


Finally, we briefly introduce MPC\cite{rawlings2012postface} and suboptimal MPC\cite{zeilinger2011real}, and compare them with our algorithms. MPC tries to solve a $W$-stage optimization at each  $t$ and implements the first control input. Suboptimal MPC, as a variant of MPC aiming at reducing computation, conducts an optimization method only for a few iterations without solving the optimization completely. Our algorithm's computation time is similar to that of suboptimal MPC with a few gradient iterations. 
However, the major difference between our algorithm and suboptimal MPC is that \black{suboptimal MPC conducts gradient updates for a truncated $W$-stage optimal control problem based on $W$-lookahead information, while our algorithm is able to conduct  gradient updates for the complete $N$-stage optimal control problem based on the same $W$-lookahead information by utilizing the reformulation  (\ref{equ: zt_def}, \ref{equ: xt's representation by z}, \ref{equ: ut= zt+1- mr Axt}, \ref{equ: def C(z)}).}

\section{Regret upper bounds}

Because our RHTM (RHGD) is designed to exactly implement the triple momentum (gradient descent) of $C(\mbf z)$ for $K$ iterations, it is straightforward to have the following regret guarantees  that connect  the regrets of RHTM and RHGD with the regret of the initialization oracle $\varphi$,

\begin{algorithm}
\caption{Receding Horizon Triple Momentum (RHTM)}
	\label{alg:RHTM-LQT}
	\begin{algorithmic}
		\STATE \textbf{inputs:} Canonical form $(A,B)$, $W\geq 1$, $K= \floor{\frac{W-1}{p}}$, stepsizes $\gamma_c, \gamma_z, \gamma_{\omega}, \gamma_y>0$,  oracle $\varphi$.
		\FOR{$t=1-W:N-1$}
		\STATE  \textit{Step 1:} initialize $z_{t+W}(0)$ by  oracle $\varphi$, then let $\omega_{t+W}(-1),\omega_{t+W}(0), y_{t+W}(0)$ be $z_{t+W}(0)$
		
		\FOR{$j=1,\dots, K$}
		\STATE \textit{Step 2:} update $\omega_{t+W-jp}(j), y_{t+W-jp}(j),z_{t+W-jp}(j)$ by triple momentum.
	\begin{align*}
		&\omega_{t+W-jp}(j)= (1+\gamma_{\omega}) \omega_{t+W-jp}(j-1)-\gamma_{\omega} \omega_{t+W-jp}(j-2)\\
		& \quad \qquad \qquad \qquad 
		-\gamma_c  \frac{\partial C}{\partial y_{t+W-jp}}(y_{t+W-(j+1)p:t+W-(j-1)p}(j-1))\\
		& y_{t+W-jp}(j) = (1+\gamma_y) \omega_{t+W-jp}(j) -\gamma_y \omega_{t+W-jp}(j-1)\\
		& z_{t+W-jp}(j)=(1+\gamma_z)\omega_{t+W-jp}(j) -\gamma_z \omega_{t+W-jp}(j-1)
		\end{align*}
		\ENDFOR
		\STATE \textit{Step 3:} compute $u_t$ by $z_{t+1}(K)$ and the observed state $x_t$:
		$u_t = z_{t+1}(K) - A(\I, :) x_t$
		\ENDFOR
	\end{algorithmic}
\end{algorithm}

\begin{theorem}\label{thm: RHGM regret bdd} 
	Consider $W\geq 1$ and stepsizes $\gamma_g=\frac{1}{l_c}$, $\gamma_c = \frac{1+\phi}{l_c}$, $ \gamma_{\omega} = \frac{\phi^2}{2-\phi}$, $ \gamma_y = \frac{\phi^2}{(1+\phi)(2-\phi)}$, $ \gamma_z = \frac{\phi^2}{1-\phi^2}$,  $\phi= 1-1/\sqrt{\zeta}$, 
	and let $\zeta=l_c/\mu_c$ denote $C(\mbf z)$'s condition number. For any oracle $\varphi$, 
	\begin{align*}
	&\black{\textup{Regret}(RHGD) \leq \zeta \left(\frac{{\zeta}-1}{ \zeta}\right)^{K}\textup{Regret}(\varphi)}, \quad \textup{Regret}(RHTM) \leq \zeta^2 \left(\frac{\sqrt{\zeta}-1}{\sqrt \zeta}\right)^{2K}\textup{Regret}(\varphi)
	\end{align*}
	where $K= \floor{\frac{W-1}{p}}$, $\textup{Regret}(\varphi)$ is the regret of  the initial controller: $u_t(0)=z_{t+1}(0)-A(\I, :)x_t(0)$.
\end{theorem}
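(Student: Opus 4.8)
\noindent\emph{Proposed approach.} The plan is to reduce the regret of each online controller to the linear convergence rate of the corresponding \emph{offline} first-order method applied to the reformulated objective $C(\mbf z)$, and then invoke textbook rates with the constants from Lemma~\ref{lem: C(z) properties}~i). As a preliminary I would fix $\mbf z^*=\argmin_{\mbf z}C(\mbf z)$: by Lemma~\ref{lem: C(z) properties}~ii) we have $C(\mbf z^*)=J^*$, and the controller induced by any $\mbf z$ through \eqref{equ: xt's representation by z}--\eqref{equ: ut= zt+1- mr Axt} has total cost exactly $C(\mbf z)$. Assembling the oracle outputs $z_{t+W}(0)$, $t=1-W,\dots,N-1$, into a vector $\mbf z(0)\in\R^{mN}$ and the final iterates $z_\tau(K)$ into $\mbf z(K)$, this gives $\textup{Regret}(\varphi)=C(\mbf z(0))-C(\mbf z^*)$ and $\textup{Regret}(RHGD)=C(\mbf z(K))-C(\mbf z^*)$, and the analogous identity for RHTM.

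The heart of the argument is to verify that RHGD's $\mbf z(K)$ is precisely the iterate produced by $K$ full gradient steps $\mbf z(j)=\mbf z(j-1)-\tfrac{1}{l_c}\nabla C(\mbf z(j-1))$ from $\mbf z(0)$, and that RHTM's $\mbf z(K)$ is precisely $K$ full triple-momentum steps from $\mbf z(0)$ with the standard initialization $\omega(-1)=\omega(0)=y(0)=\mbf z(0)$ --- the stepsizes in the theorem are exactly the triple-momentum parameters of \cite{van2017fastest}. This relies entirely on the finite temporal-coupling property, Lemma~\ref{lem: C(z) properties}~iii): because $\tfrac{\partial C}{\partial z_\tau}$ depends only on $z_{\tau-p:\tau+p}$, the level-$j$ update of coordinate $z_\tau$ needs only level-$(j-1)$ values of $z_{\tau-p},\dots,z_{\tau+p}$. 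I would make the index bookkeeping explicit: coordinate $z_\tau$ is initialized at outer step $t=\tau-W$ and updated to level $j$ at outer step $t=\tau-W+jp$, and since $K=\floor{\tfrac{W-1}{p}}$ implies $Kp\le W-1$, comparing with the loop range $t\in\{1-W,\dots,N-1\}$ shows every $z_\tau$ with $1\le\tau\le N$ receives all $K$ updates. Within a single outer step the inner loop sweeps $\tau$ downward while the level $j$ increases, so when $z_\tau(j)$ is formed its forward neighbor $z_{\tau+p}(j-1)$ has just been produced at the previous inner iteration (or, when $j=1$, by the initialization step of the same outer step), whereas $z_\tau(j-1)$ and the remaining neighbors at level $j-1$ were produced at strictly earlier outer steps; hence every update genuinely uses level-$(j-1)$ data. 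The only boundary subtlety --- indices $\le 0$, which are pinned to $0$ by $x_0=0$, and indices $>N$, which are absent and do not actually enter $\tfrac{\partial C}{\partial z_\tau}$ --- is exactly the convention already built into $C(\mbf z)$ in \eqref{equ: def C(z)}. Thus the online and offline trajectories coincide coordinate-by-coordinate at every level $j=0,\dots,K$.

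With that in hand, the last step is routine. Writing $\zeta=l_c/\mu_c$ for the condition number of $C$ (Lemma~\ref{lem: C(z) properties}~i)), gradient descent with step $1/l_c$ satisfies $\|\mbf z(K)-\mbf z^*\|^2\le(1-1/\zeta)^K\|\mbf z(0)-\mbf z^*\|^2$, and combining this with $C(\mbf z(K))-C(\mbf z^*)\le\tfrac{l_c}{2}\|\mbf z(K)-\mbf z^*\|^2$ and $\|\mbf z(0)-\mbf z^*\|^2\le\tfrac{2}{\mu_c}\bigl(C(\mbf z(0))-C(\mbf z^*)\bigr)$ yields $\textup{Regret}(RHGD)\le\zeta(1-1/\zeta)^K\textup{Regret}(\varphi)$. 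Likewise, the known triple-momentum guarantee $\|\mbf z(K)-\mbf z^*\|^2\le\zeta\,\rho^{2K}\|\mbf z(0)-\mbf z^*\|^2$ with $\rho=1-1/\sqrt\zeta$, combined with the same two inequalities, gives $\textup{Regret}(RHTM)\le\zeta^2\rho^{2K}\textup{Regret}(\varphi)$. I expect the genuinely delicate part to be the middle step --- nailing down the online/offline equivalence, i.e.\ the backward-sweep index accounting together with the boundary bookkeeping, so that no coordinate update ever reads a stale or missing neighbor; once that is settled the remaining two steps are immediate from Lemma~\ref{lem: C(z) properties} and standard first-order convergence theory.
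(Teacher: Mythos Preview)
Your proposal is correct and follows essentially the same approach as the paper: establish that RHGD/RHTM reproduce exactly $K$ iterations of the corresponding offline first-order method on $C(\mbf z)$, identify $\textup{Regret}(\cdot)=C(\mbf z(\cdot))-C(\mbf z^*)$ via Lemma~\ref{lem: C(z) properties}~ii), and then invoke the standard convergence rates. The paper's proof is in fact terser on the online/offline equivalence (it just says ``it can be verified''), and for RHTM it applies the function-value bound from \cite{van2017fastest} directly, $C(\mbf z(K))-C(\mbf z^*)\le\rho^{2K}\tfrac{l_c\zeta}{2}\|\mbf z(0)-\mbf z^*\|^2$, rather than passing through an iterate bound as you do --- but the end result and the substance of the argument are the same.
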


 Theorem~\ref{thm: RHGM regret bdd} suggests that  for any online algorithm $\varphi$ without predictions, RHGD and RHTM  can use predictions to lower the regret by a factor of $\zeta (\frac{\zeta-1}{\zeta})^K$ and $\zeta^2(\frac{\sqrt{\zeta}-1}{\sqrt \zeta})^{2K}$ respectively via additional $K=\floor{\frac{W-1}{p}}$ gradient updates.
Moreover, the factors decay exponentially with $K=\floor{\frac{W-1}{p}}$, and $K$ almost linearly increases with $W$. This indicates that  RHGD and RHTM   improve the performance exponentially fast with an increase in the prediction window $W$ for any initialization method. In addition, $K=\floor{\frac{W-1}{p}}$ decreases with  $p$, implying that the regrets increase with the controllability index $p$ (Definition~\ref{def: canonical form}). This is intuitive because $p$ roughly indicates how fast the controller can influence the system state effectively: the larger the $p$ is, the longer it takes. 
To see this, consider Example \ref{example: p=2 n=2}. Since $u_{t-1}$ does not directly affect $x_t^1$, it takes at least $p=2$ steps to change $x_t^1$ to a desirable value. Finally, RHTM's regret decays faster than RHGD's, which is intuitive because triple momentum converges faster than gradient descent. Thus, we will focus on  RHTM in the following.


\nbf{An initialization method: follow the optimal steady state (FOSS).}
To complete the regret analysis for RHTM, we provide a simple initialization method, FOSS, and its dynamic regret bound.
As
mentioned before, any online control algorithm without predictions, e.g.  \cite{cohen2018online,abbasi2014tracking}, can be applied as an initialization oracle $\varphi$. However, most literature  study  static regrets  rather than dynamic regrets.

\begin{definition}[Follow the optimal steady state (FOSS)]\label{def: follow the optimal steady state}
The optimal steady state for stage cost $f(x)+g(u)$ refers to $(x^e,u^e)\coloneqq\argmin_{x=Ax+Bu}(f(x)+g(u))$.

 Follow the optimal steady state algorithm (FOSS) first solves the optimal steady state $(x_t^e, u_t^e)$ for cost  $f_t(x)+g_t(u)$, then determines $z_{t+1}$ by $x_t^e$, i.e. 
 $z_{t+1}=(x_t^{e,k_1}, \dots, x_t^{e,k_m})^\top$ at each $t+1$.
\end{definition}
FOSS is motivated by the fact that the optimal steady state cost is the optimal infinite-horizon average cost for LTI systems with time-invariant cost functions \cite{angeli2009receding}, so FOSS should yield acceptable performance at least for slowly changing cost functions. Nevertheless, we admit that  FOSS is proposed mainly for analytical purposes and other online algorithms may outperform FOSS in various perspectives.
 The following is a regret bound for FOSS, relying on the solution to  Bellman equations.
\begin{definition}[Solution to the Bellman equations \cite{puterman2014markov}]
Consider optimal control problem: $\min \lim_{N\to +\infty}  \frac{1}{N}\sum_{t=0}^{N-1} (f(x_t)+g(u_t))$ where $x_{t+1}=Ax_t + Bu_t$.
    Let $\lambda^e$ be the optimal steady state cost $f(x^e)+g(u^e)$, which is also the optimal infinite-horizon average cost \cite{angeli2009receding}. The Bellman equations for the    problem is $h^e(x)+\lambda^e = \min_u (f(x)+g(u)+h^e(Ax+Bu))$.
    The solution to the Bellman equations, denoted by $h^e(x)$, is sometimes called as a \textit{bias function} \cite{puterman2014markov}. To ensure the uniqueness of the solution,  some extra conditions, e.g. $h^e(0)=0$, are usually  imposed.
\end{definition}




\begin{theorem}[Regret bound of FOSS]\label{thm: initialization}
Let $(x_t^e,u_t^e)$ and $h_t^e(x)$ denote the optimal steady state and the bias function with respect to cost $f_t(x)+g_t(u)$ respectively for $0\leq t \leq N-1$. Suppose $h_t^e(x)$ exists for $0\leq t \leq N-1$,\footnote{$h_t^e$ may not be unique, so extra conditions  can be imposed on $h_t^e$ for more interesting    regret bounds.} then the regret of FOSS can be bounded by
\begin{align*}
	& \textup{Regret}(\text{FOSS})= O\left(\sum_{t=0}^{N}(\| x_{t-1}^e- x_t^e \| +h^e_{t-1}(x_{t}^*)-h^e_{t}(x_{t}^*))\right),
	\end{align*}
	where
	  $\{x_t^*\}_{t=0}^{N}$ denotes the optimal state trajectory for \eqref{equ: control problem}, $x_{-1}^e=x_0^*=x_0=0$, $h^e_{-1}(x)=0, h^e_N(x)=f_N(x)$, $x_N^e=\theta_N$.
	  Consequently, by Theorem \ref{thm: RHGM regret bdd}, the regret bound of RHTM with initialization FOSS is
$ \textup{Regret}(\text{RHTM})= O\left(\zeta^2(\frac{\sqrt{\zeta}-1}{\sqrt \zeta})^{2K}\sum_{t=0}^{N}(\| x_{t-1}^e- x_t^e \| +h^e_{t-1}(x_{t}^*)-h^e_{t}(x_{t}^*))\right).
	$
\end{theorem}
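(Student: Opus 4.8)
The plan is a sandwich argument: upper-bound the FOSS cost stage by stage around the optimal steady states $(x_t^e,u_t^e)$, lower-bound $J^*$ through the Bellman equations of the bias functions $h_t^e$, and telescope. I will assume without loss of generality that $f_t(\theta_t)=g_t(\xi_t)=0$ for all $t$, since shifting $f_t,g_t$ by constants changes $J(\cdot)$ and $J^*$ by the same amount and leaves $\textup{Regret}(\text{FOSS})$, the $x_t^e$'s and the $h_t^e$'s unchanged. Let $(\hat{\mbf x},\hat{\mbf u})$ be the FOSS trajectory; by Definition~\ref{def: follow the optimal steady state} and the bijection in Lemma~\ref{lem: C(z) properties}(ii), its $z$-sequence is $\hat z_s=(x_{s-1}^{e,k_1},\dots,x_{s-1}^{e,k_m})^\top$ for $1\le s\le N$, with $\hat z_s=0$ (and set $x_s^e:=0$) for $s\le 0$, and $\hat x_t,\hat u_t$ are recovered from $\hat{\mbf z}$ via \eqref{equ: xt's representation by z}--\eqref{equ: ut= zt+1- mr Axt}.

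The first ingredient is boundedness. Testing the feasible steady state $(0,0)$ and using $l_f,l_g$-smoothness and the normalization, $\lambda_t^e=f_t(x_t^e)+g_t(u_t^e)\le f_t(0)+g_t(0)\le\tfrac12(l_f\bar\theta^2+l_g\bar\xi^2)$; then $\mu_f$-strong convexity gives $\tfrac{\mu_f}{2}\|x_t^e-\theta_t\|^2\le f_t(x_t^e)\le\lambda_t^e$, so $x_t^e$, $u_t^e=(x_t^e)^{\I}-A(\I,:)x_t^e$ (using $B(\I,:)=I_m$), the coordinates of $\hat x_t$ (each is some $x_s^{e,k_j}$ or $0$), $\hat u_t$, and $\|\nabla f_t(x_t^e)\|\le l_f\|x_t^e-\theta_t\|$, $\|\nabla g_t(u_t^e)\|\le l_g\|u_t^e-\xi_t\|$ are all uniformly bounded by problem-dependent constants. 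Since $\hat u_t-u_t^e=A(\I,:)(x_t^e-\hat x_t)$ from \eqref{equ: ut= zt+1- mr Axt} and $x_t^e=Ax_t^e+Bu_t^e$, expanding $f_t,g_t$ to second order around $x_t^e,u_t^e$ and absorbing the (bounded) quadratic remainders yields, for a constant $c_1$, the per-stage bound $f_t(\hat x_t)+g_t(\hat u_t)\le\lambda_t^e+c_1\|\hat x_t-x_t^e\|$ for $0\le t\le N-1$, and $f_N(\hat x_N)\le\tfrac{l_f}{2}\|\hat x_N-\theta_N\|^2\le c_1\|\hat x_N-x_N^e\|$ using $f_N(\theta_N)=0$ and $x_N^e=\theta_N$.

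Next I would relate $\|\hat x_t-x_t^e\|$ to the consecutive steady-state gaps. In canonical form every genuine steady state $x^e=Ax^e+Bu^e$ has constant coordinates within each block, because the rows $i\notin\I$ of $x=Ax+Bu$ read $x^i=x^{i+1}$; and the $(k_j-p_j+i+1)$-th coordinate of $\hat x_t$ equals $\hat z_{t-p_j+i+1}^{\,j}=x_{t-p_j+i}^{e,k_j}$ for $0\le i\le p_j-1$. Hence, for $t\ge1$, the block-$j$ coordinates of $\hat x_t-x_{t-1}^e$ are $x_{t-p_j+i}^{e,k_j}-x_{t-1}^{e,k_j}$, each of absolute value at most $\sum_{r=t-p}^{t-2}\|x_r^e-x_{r+1}^e\|$; combining with $\|\hat x_t-x_t^e\|\le\|\hat x_t-x_{t-1}^e\|+\|x_{t-1}^e-x_t^e\|$, with $\hat x_0=x_{-1}^e=0$, and summing over the $\le n$ coordinates and then over $t$ (each gap counted at most $p$ times) gives $\sum_{t=0}^{N}\|\hat x_t-x_t^e\|=O\!\big(\sum_{t=0}^{N}\|x_{t-1}^e-x_t^e\|\big)$.

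For the lower bound, $h_t^e$ solves $h_t^e(x)+\lambda_t^e=\min_u\bigl(f_t(x)+g_t(u)+h_t^e(Ax+Bu)\bigr)$, so evaluating at $(x_t^*,u_t^*)$ (where $Ax_t^*+Bu_t^*=x_{t+1}^*$) gives $\lambda_t^e-\bigl(f_t(x_t^*)+g_t(u_t^*)\bigr)\le h_t^e(x_{t+1}^*)-h_t^e(x_t^*)$. Writing $\textup{Regret}(\text{FOSS})=\sum_{t=0}^{N-1}\bigl(f_t(\hat x_t)+g_t(\hat u_t)-f_t(x_t^*)-g_t(u_t^*)\bigr)+f_N(\hat x_N)-f_N(x_N^*)$ and inserting the two previous paragraphs,
\begin{equation*}
\textup{Regret}(\text{FOSS})\le c_1\sum_{t=0}^{N}\|\hat x_t-x_t^e\|+\sum_{t=0}^{N-1}\bigl(h_t^e(x_{t+1}^*)-h_t^e(x_t^*)\bigr)-f_N(x_N^*).
\end{equation*}
A reindexing of the middle sum, using $h_{-1}^e\equiv0$, $x_0^*=0$ and $h_N^e=f_N$, shows $\sum_{t=0}^{N-1}\bigl(h_t^e(x_{t+1}^*)-h_t^e(x_t^*)\bigr)=f_N(x_N^*)+\sum_{t=0}^{N}\bigl(h_{t-1}^e(x_t^*)-h_t^e(x_t^*)\bigr)$, so the stray $f_N(x_N^*)$ cancels and, combined with the third paragraph, this is exactly the claimed $O\!\big(\sum_{t=0}^{N}(\|x_{t-1}^e-x_t^e\|+h_{t-1}^e(x_t^*)-h_t^e(x_t^*))\big)$; the RHTM corollary then follows by substituting into Theorem~\ref{thm: RHGM regret bdd}. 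The convexity/smoothness expansions and the Bellman inequality are routine; the main obstacle is the third paragraph --- carefully tracking how the reparameterization \eqref{equ: zt_def}--\eqref{equ: ut= zt+1- mr Axt} blends several consecutive steady states into $\hat x_t$, so that $\|\hat x_t-x_t^e\|$ collapses to one-step gaps --- together with getting the $t=0$ and $t=N$ boundary terms in the telescoping exactly right.
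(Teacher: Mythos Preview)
Your proposal is correct and follows essentially the same two-part decomposition as the paper: bound $J(\text{FOSS})-\sum_t\lambda_t^e$ stagewise using the canonical-form structure of $\hat x_t$ (the paper's Lemma~\ref{lem: J(varphi)-sum lambda e} and Lemma~\ref{lem: x e x(0) form}), and bound $\sum_t\lambda_t^e-J^*$ by telescoping the Bellman inequality along the optimal trajectory (the paper's Lemma~\ref{lem: sum lambda e -J*}). The only notable difference is cosmetic: for the lower bound you plug $(x_t^*,u_t^*)$ directly into the Bellman equation for $h_t^e$, whereas the paper packages the same inequality into a recursion for $S_k(x):=h_k^e(x)+\sum_{\tau\ge k}\lambda_\tau^e$ versus the cost-to-go $V_k$; your route is a bit shorter but yields the identical telescoping sum.
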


Theorem \ref{thm: initialization} bounds the regret by the variation of the optimal steady states $x_t^e$ and the bias functions $h_t^e$. 
If $f_t$ and $ g_t$ do not change, $x_t^e$ and  $h_t^e$ do not change, yielding a small $O(1)$ regret, i.e. $O(\|x_0^e\|+h_0^e(x_0))$,  matching our intuition. Though Theorem \ref{thm: initialization} requires  $h_t^e$ exists, the existence is guaranteed for many control problems, e.g. LQ tracking and control problems with turnpike properties \cite{damm2014exponential,grune2018economic}. 

\section{Linear quadratic tracking: regret upper bounds and a fundamental limit}\label{sec:LQT}

To provide more intuitive meaning for our regret analysis in Theorem~\ref{thm: RHGM regret bdd} and Theorem~\ref{thm: initialization}, we apply RHTM to the LQ tracking problem in Example~\ref{example: LQT}. Results for the time varying $Q_t, R_t, \theta_t$ are provided in  Appendix E; whereas here we focus on a special case which gives clean expressions for regret bounds: both an upper bound for RHTM with initialization FOSS and a lower bound for any online algorithm. Further, we show that the lower bound and the upper bound almost match each other, implying that our online algorithm RHTM uses the predictions in a nearly optimal way even though it only conducts a few gradient updates at each time step . 

The special case of LQ tracking problems is in the following form,
	\begin{equation}\label{equ: LQT Q R not changing}
	    \frac{1}{2} \sum_{t=0}^{N-1} \left[ (  x_t-  \theta_t)^\top   Q (  x_t-  \theta_t)+    u_t^\top   R u_t\right] + \frac{1}{2}   x_{N}^\top   P^e  x_{N},
	\end{equation}
	where $Q>0$, $R>0$, and $P^e$ is the solution to the algebraic Riccati equation with respect to $Q, R$ \cite{bertsekas3rddynamic}. Basically, in this special case, $Q_t=Q$, $R_t=R$ for $0\leq t \leq N-1$, $Q_N=P^e$, $\theta_N=0$, and only $\theta_t$ changes for $t=0, 1, \dots, N-1$. The  LQ  tracking problem \eqref{equ: LQT Q R not changing} aims to follow a time-varying trajectory $\{\theta_t\}$ with constant weights on the tracking cost and the control cost. 

\nbf{Regret upper bound.} Firstly, based on Theorem~\ref{thm: RHGM regret bdd} and Theorem~\ref{thm: initialization}, we have the following bound.
	\begin{corollary}\label{cor: LQT Q, R not change}
	Under the stepsizes in Theorem \ref{thm: RHGM regret bdd},  RHTM with FOSS as the initialization rule satisfies
	\begin{align*}
	& \textup{Regret}(RHTM) = O\left(\zeta^2 (\frac{\sqrt{\zeta}-1}{\sqrt \zeta})^{2K}\sum_{t=0}^{N}  \|   \theta_t-\theta_{t-1}\| \right)
	\end{align*}
	where  $K= \floor{(W-1)/p}$,   $\zeta$ is the condition number of the corresponding $C(\mbf z)$,  $\theta_{-1}=0$.	
\end{corollary}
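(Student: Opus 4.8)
# Proof proposal for Corollary~\ref{cor: LQT Q, R not change}

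The plan is to derive this as a direct specialization of Theorem~\ref{thm: initialization} to the structured LQ tracking problem~\eqref{equ: LQT Q R not changing}, so the whole task reduces to evaluating the two variation terms $\|x_{t-1}^e - x_t^e\|$ and $h_{t-1}^e(x_t^*) - h_t^e(x_t^*)$ in this special case and showing both are $O(\|\theta_t - \theta_{t-1}\|)$. First I would identify the ingredients of the general bound in the LQ-tracking language: for stage cost $\tfrac12(x-\theta_t)^\top Q(x-\theta_t) + \tfrac12 u^\top R u$, the optimal steady state $(x_t^e, u_t^e)$ solves a quadratic program with a linear constraint $x = Ax + Bu$, and by standard LQ algebra its solution is an affine (in fact linear) function of $\theta_t$, say $x_t^e = M\theta_t$ for a fixed matrix $M$ depending only on $A, B, Q, R$. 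Hence $\|x_{t-1}^e - x_t^e\| = \|M(\theta_{t-1} - \theta_t)\| \le \|M\|\,\|\theta_t - \theta_{t-1}\|$, which handles the first term with the constant absorbed into the $O(\cdot)$.

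For the second term I would use that the bias function $h_t^e$ for a quadratic stage cost with constant $Q, R$ is itself a quadratic whose Hessian is $\theta_t$-independent — it is governed by the same algebraic Riccati equation as appears in~\eqref{equ: LQT Q R not changing}, so $h_t^e(x) = \tfrac12(x - c_t)^\top P^e (x - c_t) + \text{const}_t$, with the center $c_t$ and the additive constant being affine functions of $\theta_t$. The key point is that the \emph{quadratic part} of $h_t^e$ does not depend on $t$, so the difference $h_{t-1}^e(x) - h_t^e(x)$ is at most affine in $x$, with coefficients that are affine in $\theta_{t-1} - \theta_t$ (after using $x_{t-1}^e - x_t^e = M(\theta_{t-1}-\theta_t)$ and the normalization $h_t^e(0)=0$ or whatever normalization makes the constants cancel). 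Evaluated at $x = x_t^*$, which is uniformly bounded by Assumption~\ref{ass: theta xi bdd} together with stability of the optimal closed loop, this yields $|h_{t-1}^e(x_t^*) - h_t^e(x_t^*)| \le C(1 + \|x_t^*\|)\|\theta_t - \theta_{t-1}\| = O(\|\theta_t - \theta_{t-1}\|)$. Summing over $t$ and invoking Theorem~\ref{thm: initialization} gives $\textup{Regret}(\text{FOSS}) = O(\sum_t \|\theta_t - \theta_{t-1}\|)$, and then Theorem~\ref{thm: RHGM regret bdd} multiplies this by the factor $\zeta^2(\tfrac{\sqrt\zeta-1}{\sqrt\zeta})^{2K}$ to produce the stated bound for RHTM.

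The main obstacle I anticipate is the bookkeeping around the bias functions $h_t^e$: verifying that they exist for this problem (the footnote in Theorem~\ref{thm: initialization} flags non-uniqueness), pinning down the right normalization so that the $t$-independent constants genuinely cancel in $h_{t-1}^e - h_t^e$, and confirming that the linear term in $x$ of this difference scales like $\|\theta_{t-1} - \theta_t\|$ rather than like $\|\theta_t\|$ or $\|\theta_{t-1}\|$ individually. This requires writing $h_t^e$ explicitly via the stabilizing solution $P^e$ of the Riccati equation and tracking how $\theta_t$ enters the linear coefficient. A secondary, more routine point is the uniform boundedness of the optimal trajectory $\{x_t^*\}$, needed to convert the affine-in-$x_t^*$ bound into a clean $O(\|\theta_t-\theta_{t-1}\|)$; this follows from Assumptions~\ref{ass: general Qt, Rt factors}--\ref{ass: theta xi bdd} and controllability (Assumption~\ref{ass: controllable}), since the optimal LQ tracking controller is exponentially stabilizing and the targets $\theta_t$ are uniformly bounded, but it should be stated carefully. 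Everything else is linear algebra that can be deferred to an appendix.
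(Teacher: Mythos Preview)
Your proposal is correct and follows essentially the same route as the paper: identify $x_t^e$ and the center $\beta_t^e$ of $h_t^e$ as fixed linear images of $\theta_t$ (the paper writes $x_t^e = F_1F_2\theta_t$ and $\beta_t^e = F(Q,R)\theta_t$), note that $P_t^e \equiv P^e$ so the quadratic part of $h_t^e$ is $t$-independent, and use uniform boundedness of $x_t^*$ to turn the affine difference into $O(\|\theta_t-\theta_{t-1}\|)$; then apply Theorem~\ref{thm: RHGM regret bdd}. The only presentational difference is that the paper retreats to the intermediate decomposition from Lemmas~\ref{lem: J(varphi)-sum lambda e}--\ref{lem: sum lambda e -J*} (rather than invoking Theorem~\ref{thm: initialization} as a black box) in order to handle the boundary term $f_N(x_N(0)) - h_0^e(x_0)$ explicitly, but for the specific problem~\eqref{equ: LQT Q R not changing} with $\theta_N=0$ and $Q_N=P^e$ your direct application of Theorem~\ref{thm: initialization} goes through without producing stray additive constants, so this is a matter of cleanliness rather than substance.
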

This corollary shows that the regret can be bounded by the total variation of $\theta_t$ for constant $Q, R$. 

\nbf{Fundamental limit.} For any online algorithm, we have the following lower bound.
\begin{theorem}[Lower Bound]\label{thm: lower bdd}
Consider \black{$1 \leq W\leq N/3$}, any condition number $\zeta >1$, any variation budget \black{$4 \bar \theta \leq L_N \leq  (2N+1) \bar \theta$}, and any controllability index $p\geq 1$. For any online algorithm $\A$, there exists an LQ tracking problem in form \eqref{equ: LQT Q R not changing} where i) the canonical-form system $(A, B)$ has controllability index $p$, ii) the sequence $\{\theta_t\}$ satisfies the variation budget $\sum_{t=0}^N \|   \theta_t-\theta_{t-1}\|\leq L_N$, and iii) the corresponding $C(\mbf z)$ has condition number $\zeta$, such that the following  lower bound holds
	\begin{align}
	\textup{Regret}(\mathcal{A}) = \Omega\left(  (\frac{\sqrt \zeta -1}{\sqrt \zeta +1})^{2 K}L_N \right)= \Omega\left(  (\frac{\sqrt \zeta -1}{\sqrt \zeta +1})^{2 K}\sum_{t=0}^N \|   \theta_t-\theta_{t-1}\|\right)
	\end{align}
	where   $K = \floor{(W-1)/p}$ and  $\theta_{-1}=0$.
\end{theorem}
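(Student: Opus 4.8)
The plan is to run the usual randomized-adversary argument for dynamic-regret lower bounds on a tailored family of LQ tracking instances, quantifying everything through the geometric decay of the inverse Hessian of $C(\mbf z)$. First I would fix the worst-case system: take the single-input ($m=1$) companion chain of length $n=p$ whose only feedback entry is a scalar, so its controllability index is exactly $p$; then $z_t\in\R$, $x_t=(z_{t-p+1},\dots,z_t)^\top$ and $u_t=z_{t+1}-A(\I,:)x_t$ by (\ref{equ: xt's representation by z},\ref{equ: ut= zt+1- mr Axt}). Use constant weights $Q,R$ as in \eqref{equ: LQT Q R not changing} with $P^e$ the associated Riccati solution, and let only $\{\theta_t\}$ vary. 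Substituting into \eqref{equ: def C(z)}, $C(\mbf z)$ is a convex quadratic whose Hessian $H$ is a banded, symmetric, positive-definite matrix of bandwidth at most $p+1$ that is Toeplitz in the interior ($x_0=0$ makes the left boundary consistent, and the terminal term $\tfrac12 x_N^\top P^e x_N$ is exactly what cancels the right-boundary defect). I would then choose the weights $Q,R$ (allowing $Q$ to be a general positive-definite $p\times p$ shaping matrix) so that $H$ has condition number precisely $\zeta$ and, correspondingly, so that its inverse decays geometrically, $|(H^{-1})_{s,t}|=\Theta\!\big(\rho^{\,|s-t|/p}\big)$ with $\rho=\frac{\sqrt\zeta-1}{\sqrt\zeta+1}$; this rate is read off from the factorization of $H$'s symbol, and the \emph{lower} bound on the decay is the part that matters (the Demko--Moss--Smith estimate only gives the easy upper direction).

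With the instance in hand, the effect of a target change is explicit: perturbing a single $\theta_\tau$ by $\pm\bar\theta$ shifts the linear term of $C$ at one index near $\tau$, hence moves the offline optimum $\mbf z^\ast$ at index $s$ by $\Theta\!\big(\bar\theta\,\rho^{\,|s-\tau|/p}\big)$. Now the information bottleneck: let the sign $\sigma$ of such a spike (zero target elsewhere) be a fair coin, so the two instances differ only in $\{\theta_\tau=\pm\bar\theta\}$ and each contributes variation exactly $2\bar\theta$. Any online $\A$ first learns $\theta_\tau$ at time $\tau-W+1$, so the coordinates $z_s$ it has committed to for $s\le\tau-W+1$ are independent of $\sigma$; among these the ones in the appropriate residue class mod $p$ closest to $\tau$ lie at index distance $\gtrsim W-p\ge (K-1)p$, where $K=\floor{(W-1)/p}$, so the two sign-dependent optima differ there by $\Omega(\bar\theta\rho^{K})$, giving
\begin{align*}
\E_\sigma\big[\|\mbf z(\A)-\mbf z^\ast\|^2\big]\ \ge\ \tfrac14\sum_{s}\big(z^\ast_s(+\bar\theta)-z^\ast_s(-\bar\theta)\big)^2\ =\ \Omega\!\big(\bar\theta^{\,2}\rho^{\,2K}\big).
\end{align*}
By strong convexity (Lemma~\ref{lem: C(z) properties}), $C(\mbf z(\A))-C^\ast\ge\tfrac{\mu_c}{2}\|\mbf z(\A)-\mbf z^\ast\|^2$, and Lemma~\ref{lem: C(z) properties}(ii) equates $C(\mbf z(\A))-C^\ast$ with $\textup{Regret}(\A)$, so each spike forces $\Omega(\rho^{2K})$ expected regret, with constant depending only on $\zeta,\bar\theta$ and the fixed weights.

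To finish I would pack $M=\Theta(L_N/\bar\theta)$ independent spikes into the horizon, spaced $\Theta(N/M)$ apart so that their windows of influence on $\mbf z^\ast$ barely interact (padding with a long constant stretch when $M$ is small, and switching to a denser i.i.d.\ sign pattern at the top of the variation range); since $4\bar\theta\le L_N\le(2N+1)\bar\theta$ and $W\le N/3$ there is room to do this so that the total variation is $\le L_N$ and every spike sits away from $t=0$ and $t=N$, where the clean interior structure of $H$ applies. Summing the per-spike bound over the $M$ independent signs, $\E[\textup{Regret}(\A)]=\Omega(M\rho^{2K})=\Omega(\rho^{2K}L_N)=\Omega\big(\rho^{2K}\sum_{t=0}^N\|\theta_t-\theta_{t-1}\|\big)$; since each realization is a deterministic LQ tracking instance of the required form, the probabilistic method produces one instance on which $\A$ incurs this regret.

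The main obstacle is the first stage. One must show that the Hessian of $C(\mbf z)$ for the constructed tracking problem is genuinely banded-Toeplitz with the left boundary automatically consistent and the right boundary cured by the Riccati terminal cost $P^e$; that its condition number can be tuned to an \emph{arbitrary} $\zeta>1$ for any horizon (this forces one to use a general shaping matrix $Q$ rather than a scalar weight, since the naive $Q=qI$ caps $\zeta$ at $O((N/p)^2)$); and --- most delicately --- that $H^{-1}$ decays no faster than $\rho^{|s-t|/p}$ with $\rho=\frac{\sqrt\zeta-1}{\sqrt\zeta+1}$. Nailing this exact base in the exponent, rather than a worse constant multiplying $K$, is precisely what makes the lower bound match the upper bound of Corollary~\ref{cor: LQT Q, R not change} up to the gap between $\frac{\sqrt\zeta-1}{\sqrt\zeta+1}$ and $1-\tfrac1{\sqrt\zeta}$; a secondary, more mechanical nuisance is calibrating the number and spacing of the spikes uniformly over the full admissible range of $L_N$.
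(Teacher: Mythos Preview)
Your plan is the same as the paper's: build a single-input canonical system with $n=p$, keep $Q,R$ constant, randomize the signs of $\theta_t$ on well-spaced epochs so that the total variation is at most $L_N$, observe that $z^*_{t+1}=H^{-1}\eta$ depends on $\theta_{t+W},\theta_{t+W+1},\dots$ through entries of $H^{-1}$ decaying like $\rho^{\tau}$ with $\rho=\frac{\sqrt\zeta-1}{\sqrt\zeta+1}$, while any online $\A$ must fix $z_{t+1}$ before seeing $\theta_{t+W}$; then $\E|z_{t+1}(\A)-z^*_{t+1}|^2=\Omega(\rho^{2K})$, strong convexity converts this to regret, and summing over $\Theta(L_N/\bar\theta)$ epochs plus the probabilistic method finishes.

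Where you diverge is in the anticipated obstacle. Your claim that scalar $Q=qI$ ``caps $\zeta$ at $O((N/p)^2)$'' and forces a general shaping matrix is unnecessary. The paper takes $A$ to be the \emph{cyclic} shift (last row $(1,0,\dots,0)$, not the nilpotent companion), so that $u_t=z_{t+1}-z_{t-n+1}$; with $Q=\delta I_n$, $R=1$, $\delta=\tfrac{4}{(\zeta-1)p}$, and $Q_N=P^e$ (which is diagonal here), the Hessian $H$ becomes a block matrix whose blocks are scalar multiples of $I_n$ --- i.e.\ $H=H_1\otimes I_n$ with $H_1$ scalar tridiagonal $(\delta n+2,-1)$ and a single modified boundary entry. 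Gershgorin gives condition number $\zeta$ for every $N$, and more importantly the explicit inverse formula for symmetric tridiagonal matrices (Concus--Golub--Meurant) yields the \emph{lower} bound $y_{t,t+\tau}\ge\frac{1-\rho}{\delta n+2}\rho^{\tau}$ directly. This is exactly the hard direction you flagged (Demko--Moss--Smith only gives the upper decay), and it comes for free once the construction reduces to a scalar tridiagonal inverse rather than a general banded one. So the main obstacle you identify is real for a generic companion $A$, but the paper's specific choice of $A$ dissolves it.
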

Firstly, the lower bound in Theorem \ref{thm: lower bdd} almost matches the upper bound in Corollary \ref{cor: LQT Q, R not change}, especially when $\zeta$ is large, demonstrating that RHTM utilizes the predictions  in a near-optimal way. The major conditions in Theorem \ref{thm: lower bdd} require that the prediction window is short compared with the horizon: $W\leq N/3$, and the variation of the cost functions should not be too small: $L_N\geq 4\bar \theta$, otherwise the online control problem is too easy and the regret can be very small. Moreover, the small gap between the regret bounds is conjectured to be nontrivial, because this gap coincides with the long lasting  gap in the convergence rate of the first-order algorithms for strongly convex and smooth optimization. In particular, the lower bound in Theorem \ref{thm: lower bdd} matches the fundamental convergence limit  in \cite{nesterov2013introductory}, and the upper bound is by triple momentum's convergence rate, which is the best one to our knowledge. 

\section{Numerical experiments}


\begin{figure}[htp]
    \centering
    \begin{minipage}[b]{.4\textwidth}
    \centering
     \includegraphics[width=1\textwidth]{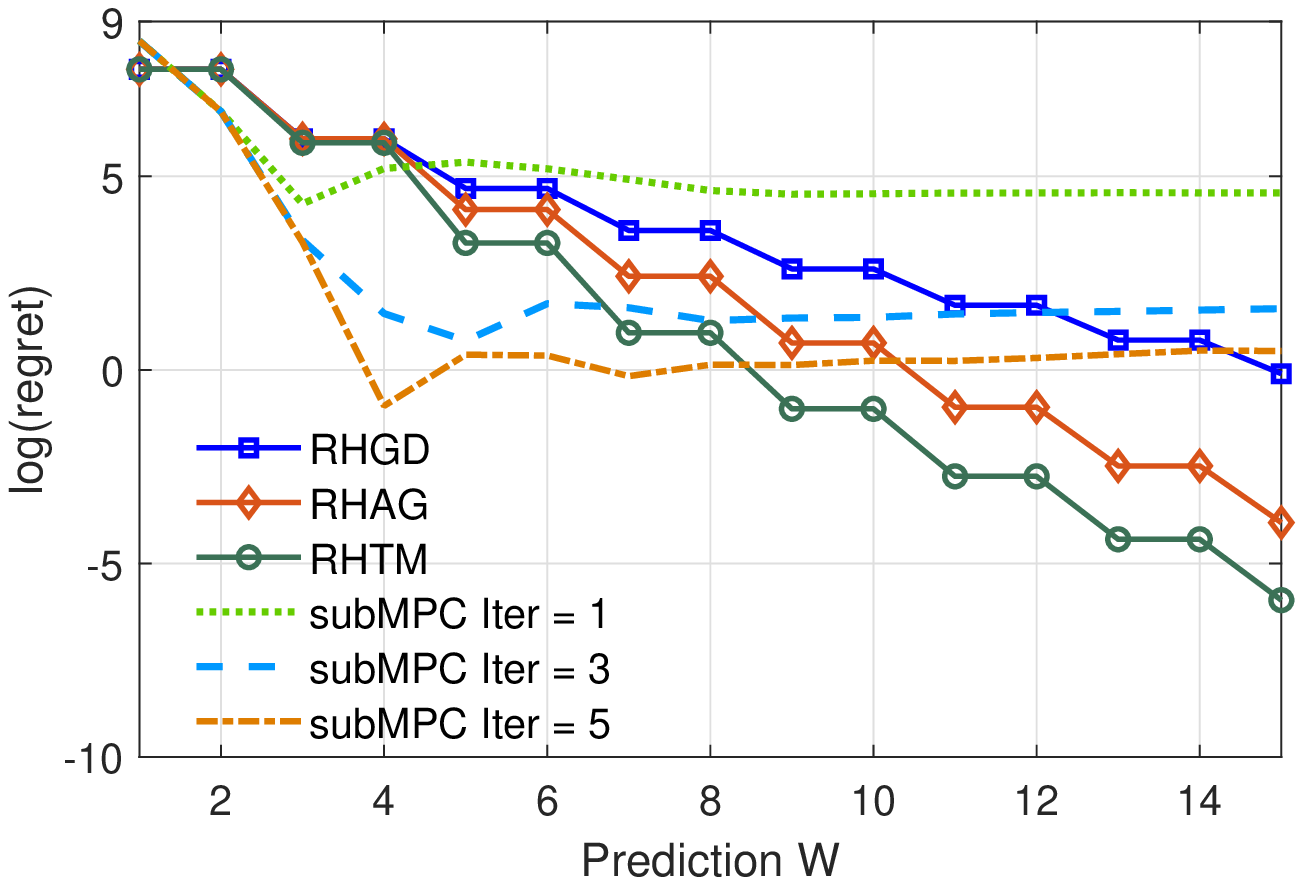}
    \begin{minipage}[t]{\textwidth}
    \caption{Regret for LQ tracking.}
    \label{fig:LQT}
    \end{minipage}
    \end{minipage}%
    \hfill
    \begin{minipage}[b]{.6\textwidth}
    \centering
   \includegraphics[width=.5\textwidth]{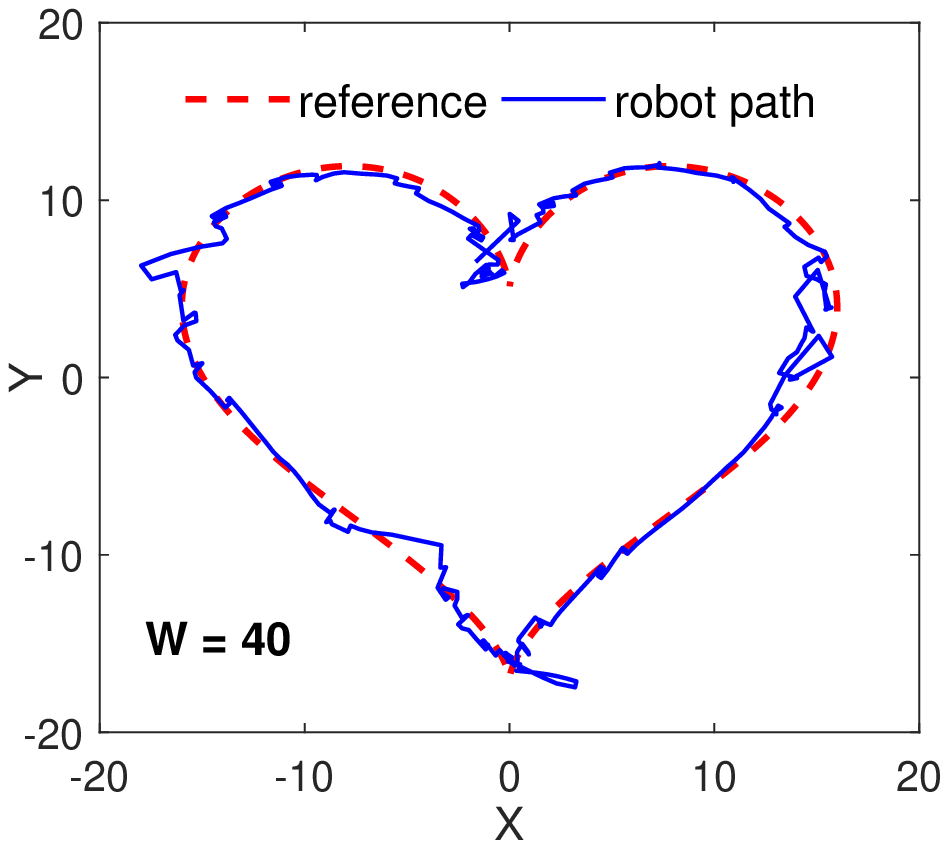}\includegraphics[width=.5\textwidth]{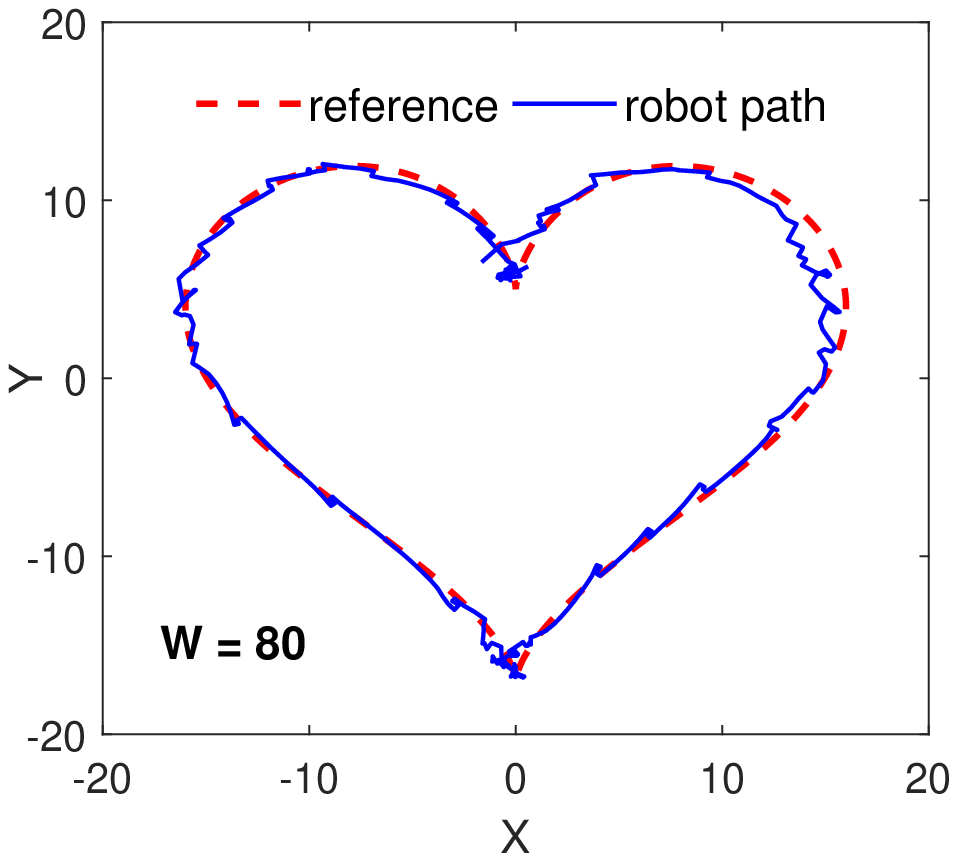}
    \begin{minipage}[t]{\textwidth}
    \caption{Two-wheel robot tracking with nonlinear dynamics.}
    \label{fig: robot}
    \end{minipage}
    \end{minipage}
    \vspace{-9pt}
\end{figure}

\textbf{LQ tracking problem in Example~\ref{example: LQT}.} The system considered here has $n=2$, $m=1$, and $p=2$. More details  of the experiment settings are provided in Appendix H. 
 We compare RHGC with a suboptimal MPC algorithm, fast gradient MPC (subMPC) \cite{zeilinger2011real}. Roughly speaking, subMPC solves the $W$-stage truncated optimal control from $t$ to $t+W-1$ by Nesterov's accelerated gradient \cite{nesterov2013introductory}, and one iteration of Nesterov's accelerated gradient requires $2W$ gradient evaluations of stage cost function since $W$ stages are considered and each stage has two costs $f_t$ and $g_t$.  This implies that, in terms of the number of gradient evaluations, subMPC with one iteration corresponds to our RHTM because RHTM also requires roughly $2W$ gradient evaluations per stage. Therefore, Figure \ref{fig:LQT} compares our RHGC algorithms with subMPC with one iteration. Figure \ref{fig:LQT} also plots subMPC with 3 and 5 iterations for more insights. Besides, Figure \ref{fig:LQT} plots not only RHGD and RHTM, but also RHAG, which is based on Nesterov's accelerated gradient.  
Figure \ref{fig:LQT} shows that all our  algorithms achieve exponential decaying regrets with respect to $W$, and the regrets are piecewise constant, matching Theorem \ref{thm: RHGM regret bdd}.  Further, it is observed that RHTM and RHAG perform better than RHGD, which is intuitive because triple momentum and Nesterov's accelerated gradient are accelerated versions of gradient descent. In addition, our algorithms are much better than   subMPC with 1 iteration, implying that our algorithms utilize the lookahead information more efficiently given  similar computational time. Finally,
 subMPC achieves better performance by increasing the iteration number but the improvement saturates as $W$ increases, in contrast to the steady improvement of RHGC.  

\textbf{Path tracking for a two-wheel mobile robot.} Though we presented our online algorithms on an LTI system, our RHGC methods are applicable to some nonlinear systems as well. Here we consider a two-wheel mobile robot with nonlinear kinematic dynamics  $\dot x = v\cos \delta, \dot y = v \sin \delta, \dot \delta = w$
where $(x,y)$ is the robot location, $v$ and  $w$ are the tangential and angular velocities respectively, $\delta$ denotes the tangent angle between $v$ and the $x$ axis \cite{klancar2005mobile}. The control is directly on the $v$ and $w$, e.g., via the pulse-width modulation (PWM) of the motor \cite{mpi}.
Given a reference path $(x^r_t, y^r_t)$, the objective is to balance the tracking performance and the control cost, i.e., $\min \ \  \sum_{t = 0}^N \left[ c_t\cdot\left((x_t - x^r_t)^2 + (y_t - y^r_t)^2 \right)+c^v_t\cdot (v_t)^2 + c^w_t\cdot (w_t)^2\right]$.
We discretize the dynamics with time interval $\Delta t=0.025\text{s}$; then  follow similar ideas in this paper to reformulate the optimal path tracking problem  to an unconstrained optimization with respect to $(x_t,y_t)$ and apply RHGC. See Appendix H for details. Figure \ref{fig: robot} plots the tracking results with window $W=40$ and $W=80$ corresponding to lookahead time $1$s and $2$s. A video showing the dynamic processes with different $W$ is provided at \url{https://youtu.be/fal56LTBD1s}. It is observed that the robot follows the reference trajectory well especially when the path is smooth but deviates a little more when the path has sharp turns, and a longer lookahead window leads to better tracking performance.  These results confirm that our RHGC works effectively on nonlinear systems.







\section{Conclusion}
This paper studies the role of predictions on dynamic regrets of online control problems with linear dynamics. We design RHGC algorithms and provide  regret upper bounds of two specific algorithms: RHGD and RHTM. We also provide a fundamental limit and show the fundamental limit almost matches RHTM's upper bound. \black{This paper leads to many interesting future directions, some of which are briefly discussed below. The first direction is to study more realistic prediction models which considers random prediction noises, e.g. \cite{chen2015online, chen2016using,bang2005doubly}.  The second direction is to consider unknown systems with process noises, possibly by applying learning-based control tools \cite{dean2017sample,tu2017least,ouyang2017learning}. Further, more studies could be conducted on general control problems including nonlinear control and control with input and state constraints. Besides, it is interesting to consider other performance metrics, such as competitive ratio, since the dynamic regret is non-vanishing. Finally, other future directions include closing the gap of the regret bounds and more discussion on the effect of the canonical-form transformation on the condition number.}




\section*{Acknowledgement}
This work was supported by NSF Career 1553407, ARPA-E NODES, AFOSR YIP and ONR YIP programs.

\bibliographystyle{unsrt}

\bibliography{citation4OC}

\begin{thebibliography}{10}

\bibitem{lazic2018data}
Nevena Lazic, Craig Boutilier, Tyler Lu, Eehern Wong, Binz Roy, MK~Ryu, and
  Greg Imwalle.
\newblock Data center cooling using model-predictive control.
\newblock In {\em Advances in Neural Information Processing Systems}, pages
  3814--3823, 2018.

\bibitem{xu2006predictive}
Wei Xu, Xiaoyun Zhu, Sharad Singhal, and Zhikui Wang.
\newblock Predictive control for dynamic resource allocation in enterprise data
  centers.
\newblock In {\em 2006 IEEE/IFIP Network Operations and Management Symposium
  NOMS 2006}, pages 115--126. IEEE, 2006.

\bibitem{baca2018model}
Tomas Baca, Daniel Hert, Giuseppe Loianno, Martin Saska, and Vijay Kumar.
\newblock Model predictive trajectory tracking and collision avoidance for
  reliable outdoor deployment of unmanned aerial vehicles.
\newblock In {\em 2018 IEEE/RSJ International Conference on Intelligent Robots
  and Systems (IROS)}, pages 6753--6760. IEEE, 2018.

\bibitem{rios2016survey}
Jackeline Rios-Torres and Andreas~A Malikopoulos.
\newblock A survey on the coordination of connected and automated vehicles at
  intersections and merging at highway on-ramps.
\newblock {\em IEEE Transactions on Intelligent Transportation Systems},
  18(5):1066--1077, 2016.

\bibitem{kim2014mpc}
Kyoung-Dae Kim and Panganamala~Ramana Kumar.
\newblock An mpc-based approach to provable system-wide safety and liveness of
  autonomous ground traffic.
\newblock {\em IEEE Transactions on Automatic Control}, 59(12):3341--3356,
  2014.

\bibitem{kouro2008model}
Samir Kouro, Patricio Cort{\'e}s, Ren{\'e} Vargas, Ulrich Ammann, and Jos{\'e}
  Rodr{\'\i}guez.
\newblock Model predictive control—a simple and powerful method to control
  power converters.
\newblock {\em IEEE Transactions on industrial electronics}, 56(6):1826--1838,
  2008.

\bibitem{perea2003model}
Edgar Perea-Lopez, B~Erik Ydstie, and Ignacio~E Grossmann.
\newblock A model predictive control strategy for supply chain optimization.
\newblock {\em Computers \& Chemical Engineering}, 27(8-9):1201--1218, 2003.

\bibitem{wang2007model}
Wenlin Wang, Daniel~E Rivera, and Karl~G Kempf.
\newblock Model predictive control strategies for supply chain management in
  semiconductor manufacturing.
\newblock {\em International Journal of Production Economics}, 107(1):56--77,
  2007.

\bibitem{diehl2010lyapunov}
Moritz Diehl, Rishi Amrit, and James~B Rawlings.
\newblock A lyapunov function for economic optimizing model predictive control.
\newblock {\em IEEE Transactions on Automatic Control}, 56(3):703--707, 2010.

\bibitem{muller2017economic}
Matthias~A M{\"u}ller and Frank Allg{\"o}wer.
\newblock Economic and distributed model predictive control: Recent
  developments in optimization-based control.
\newblock {\em SICE Journal of Control, Measurement, and System Integration},
  10(2):39--52, 2017.

\bibitem{ellis2014tutorial}
Matthew Ellis, Helen Durand, and Panagiotis~D Christofides.
\newblock A tutorial review of economic model predictive control methods.
\newblock {\em Journal of Process Control}, 24(8):1156--1178, 2014.

\bibitem{ferramosca2010economic}
Antonio Ferramosca, James~B Rawlings, Daniel Lim{\'o}n, and Eduardo~F Camacho.
\newblock Economic mpc for a changing economic criterion.
\newblock In {\em 49th IEEE Conference on Decision and Control (CDC)}, pages
  6131--6136. IEEE, 2010.

\bibitem{ellis2014economic}
Matthew Ellis and Panagiotis~D Christofides.
\newblock Economic model predictive control with time-varying objective
  function for nonlinear process systems.
\newblock {\em AIChE Journal}, 60(2):507--519, 2014.

\bibitem{angeli2016theoretical}
David Angeli, Alessandro Casavola, and Francesco Tedesco.
\newblock Theoretical advances on economic model predictive control with
  time-varying costs.
\newblock {\em Annual Reviews in Control}, 41:218--224, 2016.

\bibitem{amrit2011economic}
Rishi Amrit, James~B Rawlings, and David Angeli.
\newblock Economic optimization using model predictive control with a terminal
  cost.
\newblock {\em Annual Reviews in Control}, 35(2):178--186, 2011.

\bibitem{grune2013economic}
Lars Gr{\"u}ne.
\newblock Economic receding horizon control without terminal constraints.
\newblock {\em Automatica}, 49(3):725--734, 2013.

\bibitem{angeli2012average}
David Angeli, Rishi Amrit, and James~B Rawlings.
\newblock On average performance and stability of economic model predictive
  control.
\newblock {\em IEEE transactions on automatic control}, 57(7):1615--1626, 2012.

\bibitem{grune2014asymptotic}
Lars Gr{\"u}ne and Marleen Stieler.
\newblock Asymptotic stability and transient optimality of economic mpc without
  terminal conditions.
\newblock {\em Journal of Process Control}, 24(8):1187--1196, 2014.

\bibitem{grune2015non}
Lars Gr{\"u}ne and Anastasia Panin.
\newblock On non-averaged performance of economic mpc with terminal conditions.
\newblock In {\em 2015 54th IEEE Conference on Decision and Control (CDC)},
  pages 4332--4337. IEEE, 2015.

\bibitem{ferramosca2014economic}
Antonio Ferramosca, Daniel Limon, and Eduardo~F Camacho.
\newblock Economic mpc for a changing economic criterion for linear systems.
\newblock {\em IEEE Transactions on Automatic Control}, 59(10):2657--2667,
  2014.

\bibitem{grune2017closed}
Lars Gr{\"u}ne and Simon Pirkelmann.
\newblock Closed-loop performance analysis for economic model predictive
  control of time-varying systems.
\newblock In {\em 2017 IEEE 56th Annual Conference on Decision and Control
  (CDC)}, pages 5563--5569. IEEE, 2017.

\bibitem{grune2018economic}
Lars Gr{\"u}ne and Simon Pirkelmann.
\newblock Economic model predictive control for time-varying system:
  Performance and stability results.
\newblock {\em Optimal Control Applications and Methods}, 2018.

\bibitem{zeilinger2011real}
Melanie~Nicole Zeilinger, Colin~Neil Jones, and Manfred Morari.
\newblock Real-time suboptimal model predictive control using a combination of
  explicit mpc and online optimization.
\newblock {\em IEEE Transactions on Automatic Control}, 56(7):1524--1534, 2011.

\bibitem{wang2010fast}
Yang Wang and Stephen Boyd.
\newblock Fast model predictive control using online optimization.
\newblock {\em IEEE Transactions on Control Systems Technology},
  18(2):267--278, 2010.

\bibitem{graichen2010stability}
Knut Graichen and Andreas Kugi.
\newblock Stability and incremental improvement of suboptimal mpc without
  terminal constraints.
\newblock {\em IEEE Transactions on Automatic Control}, 55(11):2576--2580,
  2010.

\bibitem{eichler2017optimal}
Douglas~A Allan, Cuyler~N Bates, Michael~J Risbeck, and James~B Rawlings.
\newblock On the inherent robustness of optimal and suboptimal nonlinear mpc.
\newblock {\em Systems \& Control Letters}, 106:68--78, 2017.

\bibitem{hazan2016introduction}
E.~Hazan.
\newblock {\em Introduction to Online Convex Optimization}.
\newblock Foundations and Trends(r) in Optimization Series. Now Publishers,
  2016.

\bibitem{shalev2012online}
S.~Shalev-Shwartz.
\newblock {\em Online Learning and Online Convex Optimization}.
\newblock Foundations and Trends(r) in Machine Learning. Now Publishers, 2012.

\bibitem{jadbabaie2015online}
Ali Jadbabaie, Alexander Rakhlin, Shahin Shahrampour, and Karthik Sridharan.
\newblock Online optimization: Competing with dynamic comparators.
\newblock In {\em Artificial Intelligence and Statistics}, pages 398--406,
  2015.

\bibitem{lin2013dynamic}
Minghong Lin, Adam Wierman, Lachlan~LH Andrew, and Eno Thereska.
\newblock Dynamic right-sizing for power-proportional data centers.
\newblock {\em IEEE/ACM Transactions on Networking (TON)}, 21(5):1378--1391,
  2013.

\bibitem{lin2012online}
Minghong Lin, Zhenhua Liu, Adam Wierman, and Lachlan~LH Andrew.
\newblock Online algorithms for geographical load balancing.
\newblock In {\em Green Computing Conference (IGCC), 2012 International}, pages
  1--10. IEEE, 2012.

\bibitem{rakhlin2013online}
Alexander Rakhlin and Karthik Sridharan.
\newblock Online learning with predictable sequences.
\newblock In {\em Conference on Learning Theory}, pages 993--1019, 2013.

\bibitem{chen2015online}
Niangjun Chen, Anish Agarwal, Adam Wierman, Siddharth Barman, and Lachlan~LH
  Andrew.
\newblock Online convex optimization using predictions.
\newblock In {\em ACM SIGMETRICS Performance Evaluation Review}, volume~43,
  pages 191--204. ACM, 2015.

\bibitem{badiei2015online}
Masoud Badiei, Na~Li, and Adam Wierman.
\newblock Online convex optimization with ramp constraints.
\newblock In {\em Decision and Control (CDC), 2015 IEEE 54th Annual Conference
  on}, pages 6730--6736. IEEE, 2015.

\bibitem{chen2016using}
Niangjun Chen, Joshua Comden, Zhenhua Liu, Anshul Gandhi, and Adam Wierman.
\newblock Using predictions in online optimization: Looking forward with an eye
  on the past.
\newblock In {\em Proceedings of the 2016 ACM SIGMETRICS International
  Conference on Measurement and Modeling of Computer Science}, pages 193--206.
  ACM, 2016.

\bibitem{li2018online}
Yingying Li, Guannan Qu, and Na~Li.
\newblock Online optimization with predictions and switching costs: Fast
  algorithms and the fundamental limit.
\newblock {\em arXiv preprint arXiv:1801.07780}, 2018.

\bibitem{goel2019online}
Gautam Goel and Adam Wierman.
\newblock An online algorithm for smoothed regression and lqr control.
\newblock In {\em The 22nd International Conference on Artificial Intelligence
  and Statistics}, pages 2504--2513, 2019.

\bibitem{nesterov2013introductory}
Yurii Nesterov.
\newblock {\em Introductory lectures on convex optimization: A basic course},
  volume~87.
\newblock Springer Science \& Business Media, 2013.

\bibitem{van2017fastest}
Bryan Van~Scoy, Randy~A Freeman, and Kevin~M Lynch.
\newblock The fastest known globally convergent first-order method for
  minimizing strongly convex functions.
\newblock {\em IEEE Control Systems Letters}, 2(1):49--54, 2017.

\bibitem{luenberger1967canonical}
David Luenberger.
\newblock Canonical forms for linear multivariable systems.
\newblock {\em IEEE Transactions on Automatic Control}, 12(3):290--293, 1967.

\bibitem{abbasi2014tracking}
Yasin Abbasi-Yadkori, Peter Bartlett, and Varun Kanade.
\newblock Tracking adversarial targets.
\newblock In {\em International Conference on Machine Learning}, pages
  369--377, 2014.

\bibitem{cohen2018online}
Alon Cohen, Avinatan Hasidim, Tomer Koren, Nevena Lazic, Yishay Mansour, and
  Kunal Talwar.
\newblock Online linear quadratic control.
\newblock In {\em International Conference on Machine Learning}, pages
  1028--1037, 2018.

\bibitem{agarwal2019online}
Naman Agarwal, Brian Bullins, Elad Hazan, Sham Kakade, and Karan Singh.
\newblock Online control with adversarial disturbances.
\newblock In {\em International Conference on Machine Learning}, pages
  111--119, 2019.

\bibitem{dean2017sample}
Sarah Dean, Horia Mania, Nikolai Matni, Benjamin Recht, and Stephen Tu.
\newblock On the sample complexity of the linear quadratic regulator.
\newblock {\em arXiv preprint arXiv:1710.01688}, 2017.

\bibitem{dean2018regret}
Sarah Dean, Horia Mania, Nikolai Matni, Benjamin Recht, and Stephen Tu.
\newblock Regret bounds for robust adaptive control of the linear quadratic
  regulator.
\newblock In {\em Advances in Neural Information Processing Systems}, pages
  4188--4197, 2018.

\bibitem{tu2017least}
Stephen Tu and Benjamin Recht.
\newblock Least-squares temporal difference learning for the linear quadratic
  regulator.
\newblock {\em arXiv preprint arXiv:1712.08642}, 2017.

\bibitem{vamvoudakis2010online}
Kyriakos~G Vamvoudakis and Frank~L Lewis.
\newblock Online actor--critic algorithm to solve the continuous-time infinite
  horizon optimal control problem.
\newblock {\em Automatica}, 46(5):878--888, 2010.

\bibitem{ouyang2017learning}
Yi~Ouyang, Mukul Gagrani, and Rahul Jain.
\newblock Learning-based control of unknown linear systems with thompson
  sampling.
\newblock {\em arXiv preprint arXiv:1709.04047}, 2017.

\bibitem{lu2013online}
Lian Lu, Jinlong Tu, Chi-Kin Chau, Minghua Chen, and Xiaojun Lin.
\newblock {\em Online energy generation scheduling for microgrids with
  intermittent energy sources and co-generation}, volume~41.
\newblock ACM, 2013.

\bibitem{borodin1992optimal}
Allan Borodin, Nathan Linial, and Michael~E Saks.
\newblock An optimal on-line algorithm for metrical task system.
\newblock {\em Journal of the ACM (JACM)}, 39(4):745--763, 1992.

\bibitem{mokhtari2016online}
Aryan Mokhtari, Shahin Shahrampour, Ali Jadbabaie, and Alejandro Ribeiro.
\newblock Online optimization in dynamic environments: Improved regret rates
  for strongly convex problems.
\newblock In {\em 2016 IEEE 55th Conference on Decision and Control (CDC)},
  pages 7195--7201. IEEE, 2016.

\bibitem{andrew2013tale}
Lachlan Andrew, Siddharth Barman, Katrina Ligett, Minghong Lin, Adam Meyerson,
  Alan Roytman, and Adam Wierman.
\newblock A tale of two metrics: Simultaneous bounds on competitiveness and
  regret.
\newblock In {\em Conference on Learning Theory}, pages 741--763, 2013.

\bibitem{hespanha2018linear}
Joao~P Hespanha.
\newblock {\em Linear systems theory}.
\newblock Princeton university press, 2018.

\bibitem{richter2011computational}
Stefan Richter, Colin~Neil Jones, and Manfred Morari.
\newblock Computational complexity certification for real-time mpc with input
  constraints based on the fast gradient method.
\newblock {\em IEEE Transactions on Automatic Control}, 57(6):1391--1403, 2011.

\bibitem{rawlings2012postface}
JB~Rawlings and DQ~Mayne.
\newblock Postface to model predictive control: Theory and design.
\newblock {\em Nob Hill Pub}, pages 155--158, 2012.

\bibitem{angeli2009receding}
David Angeli, Rishi Amrit, and James~B Rawlings.
\newblock Receding horizon cost optimization for overly constrained nonlinear
  plants.
\newblock In {\em Proceedings of the 48h IEEE Conference on Decision and
  Control (CDC) held jointly with 2009 28th Chinese Control Conference}, pages
  7972--7977. IEEE, 2009.

\bibitem{puterman2014markov}
Martin~L Puterman.
\newblock {\em Markov decision processes: discrete stochastic dynamic
  programming}.
\newblock John Wiley \& Sons, 2014.

\bibitem{damm2014exponential}
Tobias Damm, Lars Gr{\"u}ne, Marleen Stieler, and Karl Worthmann.
\newblock An exponential turnpike theorem for dissipative discrete time optimal
  control problems.
\newblock {\em SIAM Journal on Control and Optimization}, 52(3):1935--1957,
  2014.

\bibitem{bertsekas3rddynamic}
Dimitri~P Bertsekas.
\newblock {\em Dynamic programming and optimal control}, volume~1.
\newblock 2011.

\bibitem{klancar2005mobile}
Gregor Klancar, Drago Matko, and Saso Blazic.
\newblock Mobile robot control on a reference path.
\newblock In {\em Proceedings of the 2005 IEEE International Symposium on,
  Mediterrean Conference on Control and Automation Intelligent Control, 2005.},
  pages 1343--1348. IEEE, 2005.

\bibitem{mpi}
Pololu Corporation.
\newblock {\em Pololu m3pi User's Guide}.
\newblock Available at \url{https://www.pololu.com/docs/pdf/0J48/m3pi.pdf}.

\bibitem{bang2005doubly}
Heejung Bang and James~M Robins.
\newblock Doubly robust estimation in missing data and causal inference models.
\newblock {\em Biometrics}, 61(4):962--973, 2005.

\bibitem{concus1985block}
Paul Concus, Gene~H Golub, and G{\'e}rard Meurant.
\newblock Block preconditioning for the conjugate gradient method.
\newblock {\em SIAM Journal on Scientific and Statistical Computing},
  6(1):220--252, 1985.

\bibitem{zedek1965continuity}
Mishael Zedek.
\newblock Continuity and location of zeros of linear combinations of
  polynomials.
\newblock {\em Proceedings of the American Mathematical Society}, 16(1):78--84,
  1965.

\end{thebibliography}
\newpage
\appendix
\section*{Appendices}

In Appendix \ref{append: canonical form}, we will discuss the canonical-form transformation. In Appendix \ref{append: triple momentum}, we will briefly introduce the  triple momentum algorithm proposed in \cite{van2017fastest} and provide the proof of Theorem \ref{thm: RHGM regret bdd}. In Appendix \ref{append: C(z) property}, we will provide the proof of Lemma \ref{lem: C(z) properties}. In Appendix \ref{append: initial}, we will present the proof of Theorem \ref{thm: initialization}.  Appendix \ref{append: LQT} provides a proof of Corollary \ref{cor: LQT Q, R not change} and regret analysis for more general linear quadratic tracking problems. Appendix \ref{append: lower bound} provides a proof of Theorem \ref{thm: lower bdd}. In Appendix \ref{append: LQT proof}, we will provide the proofs of the technical lemmas used in Appendix \ref{append: LQT}. In Appendix \ref{append: simulation}, we will provide  a more detailed description of our numerical experiments.

\section{Canonical form}\label{append: canonical form}
In this section, we introduce the linear transformation from a general LTI system to a canonical-form LTI system, and then discuss how to convert a general online optimal control problem to an online optimal control problem with a canonical-form system.

Firstly, consider a general LTI system: $x_{t+1}=Ax_t +Bu_t$ and two invertible matrices $S_x\in \R^n, S_u\in R^m$. Under the linear transformation on state and control: $\hat x_t= S_x x_t, \ \hat u_t = S_u u_t$, the equivalent LTI system with respect to the new state $\hat x_t$ and the new control $\hat u_t$ is
\begin{align*}
    \hat x_{t+1}= S_x A S_x^{-1}\hat x_t + S_x B S_u^{-1}\hat u_t
\end{align*}

By   \cite[Theorem 1]{luenberger1967canonical}, for any controllable $(A, B)$, there exist $S_x, S_u$ such that $\hat A = S_x A S_x^{-1}$ and $\hat B= S_x B S_u^{-1}$ are in the canonical form defined in Definition \ref{def: canonical form}. The computation method of $S_x, S_u$ is also provided in \cite{luenberger1967canonical}.

In an online optimal control problem, since $A,B$ are known as priors, $S_x, \ S_u$ can be computed offline. When stage cost functions $f_t(x_t), g_t(u_t)$ are received online, the new cost functions $\hat f_t(\hat x_t), \hat g_t(\hat u_t)$ for the canonical-form system can be computed online by applying $S_x,S_u$:
\begin{align*}
\hat f_t(\hat x_t)= f_t(x_t)=f_t(S_x^{-1}\hat x_t),\quad \hat g_t(\hat u_t)= g_t(u_t)=g_t(S_u^{-1}\hat u_t)
\end{align*}
{Moreover, it is straightforward to verify that $\hat f_t(\hat x_t)$ and $\hat g_t(\hat u_t)$ still satisfy Assumption \ref{ass: general Qt, Rt factors} and \ref{ass: theta xi bdd}, just with perhaps different parameters. For example, $\hat f_t(\hat x_t)$ is $\mu_f/\|S_x\|^2$ strongly convex and $l_f \|S_x^{-1}\|^2$ smooth and $\hat g_t(\hat u_t)$ is  convex  and $l_g \|S_u^{-1}\|^2$ smooth.  } Therefore, it is without loss of generality to only consider online optimal control with canonical-form systems.

\section{Triple momentum and a proof of Theorem \ref{thm: RHGM regret bdd}}\label{append: triple momentum}

Triple Momentum (TM) is an accelerated version of gradient descent proposed in \cite{van2017fastest}. When optimizing an unconstrained optimization $\min_{\mbf z}C(\mbf z)$, at each iteration $j\geq 0$, TM conducts
\begin{align*}
& \pmb\omega(j+1)= (1+\delta_{\omega}) \pmb\omega(j)-\delta_{\omega}\pmb\omega_(j-1) -\delta_c \nabla C(\mbf y(j))\\
& \mbf y(j+1) = (1+\delta_y) \pmb\omega(j+1) -\delta_y\pmb\omega(j)\\
& \mbf z(j+1)=(1+\delta_z)\pmb\omega(j+1)-\delta_z \pmb\omega(j)
\end{align*}
where $\pmb\omega(j), \mbf y(j)$ are auxiliary variables to accelerate the convergence, $\mbf z(j)$ is the decision variable, $\pmb \omega(0)=\pmb \omega(-1)=\mbf z(0)=\mbf y(0)$ are given initial values.

Suppose $\mbf z=(z_1^\top , \dots, z_N^\top )^\top $. Zooming in to each coordinate $z_t$, the update of $z_t(j)$ by TM is provided below
\begin{align*}
&\omega_{t}(j+1)= (1+\delta_{\omega}) \omega_t(j)-\delta_{\omega}\omega_{t}(j-1) -\delta_c  \frac{\partial C}{\partial y_t}(\mbf y(j))\\
& y_t(j+1) = (1+\delta_y) \mbf \omega_t(j+1) -\delta_y \omega_t(j)\\
& z_t(j+1)=(1+\delta_z)\omega_t(j+1)-\delta_z \omega_{t}(j)
\end{align*}
By Section \ref{sec: RHTM},  $\frac{\partial C}{\partial y_t}(\mbf y(j))$ only depends on stage cost functions and  stage variables  across a finite neighboring stages, 
allowing the online implementation in Algorithm \ref{alg:RHTM-LQT} based on the finite-lookahead window.

TM enjoys a faster convergence rate than the gradient descent for $\mu_c$ strongly convex and $l_c$ smooth functions under proper step sizes. In particular, when $\gamma_c = \frac{1+\phi}{l_c}, \ \gamma_w = \frac{\phi^2}{2-\phi}, \ \gamma_y = \frac{\phi^2}{(1+\phi)(2-\phi)}, \ \gamma_z = \frac{\phi^2}{1-\phi^2}$, and $\phi= 1-1/\sqrt{\zeta}$, $\zeta= l_c/\mu_c$, by \cite[Theorem 1]{van2017fastest}, the convergence of TM satisfies:
\begin{align}\label{equ: TM convergence}
    C(\mbf z(j))-C(\mbf z^*) \leq (\frac{\sqrt{\zeta}-1}{\sqrt \zeta})^{2j} \frac{l_c \zeta}{2} \| \mbf z(0)-\mbf z^* \|^2 \leq \zeta^2 (\frac{\sqrt{\zeta}-1}{\sqrt \zeta})^{2j}(C(\mathbf z(0))-C(\mathbf z^*))
\end{align}
In the following, we will apply this result to prove Theorem \ref{thm: RHGM regret bdd}.

\subsection{Proof of Theorem \ref{thm: RHGM regret bdd}}

By comparing TM with RHTM, it can be verified that $z_{t+1}(K)$ computed by RHTM is the same as  $z_{t+1}(K)$ computed by the triple momentum after  $K$ iterations. Moreover,
	according to  the equivalence between  the optimization  $\min_{\mbf z} C(\mbf z)$ and the optimal control $J(\mbf x, \mbf u)$ in Lemma \ref{lem: C(z) properties},  $$J(RHTM)=C(\mbf z(K)), \ J(\varphi)=C(\mbf z(0)), \ J^*=C(\mbf z^*)$$
	Finally, by utiltizing \eqref{equ: TM convergence}, the bound on $\text{Regret}(RHTM)$ is straightforward.

{
The regret of RHGD can be proved in the same way.
}

\section{Proof of Lemma \ref{lem: C(z) properties}}\label{append: C(z) property}
Property ii) and iii) can be directly verified by definition. Thus, it suffices to prove i): the  strong convexity and smoothness of $C(\mbf z)$.

Notice that $x_t, \ u_t$ are linear with respect to $\mbf z$ by \eqref{equ: xt's representation by z} \eqref{equ: ut= zt+1- mr Axt}. For ease of reference, we define matrix $M^{x_t}, M^{u_t}$ to represent the relation between $x_t,u_t$ and $\mbf z$, i.e,  $x_t = M^{x_t} \mbf z$ and $u_t = M^{u_t} \mbf z$. Similarly, we write $\tilde f_t(z_{t-p+1}, \dots, z_t)$ and $\tilde g_t(z_{t-p+1}, \dots, z_{t+1})$ in terms of $\mbf z$ for simplicity of notation:
\begin{align*}
    \tilde f_t(z_{t-p+1}, \dots, z_t)&= \tilde f_t(\mbf z) = f_t(M^{x_t}\mbf z)\\
    \tilde g_t(z_{t-p+1}, \dots, z_{t+1})&= \tilde g_t(\mbf z) = g_t(M^{u_t}\mbf z)
\end{align*}

A direct consequence of the linear relations is that $\tilde f_t(\mbf z)$ and $\tilde g_t(\mbf z)$ are convex with respect to $\mbf z$ because $f_t(x_t), g_t(u_t)$ are convex and the linear transformation preserves convexity. 

In the following, we will focus on the proof of strong convexity and smoothness. For simplicity, in the following, 	 we  only consider cost function $f_t, g_t$ with minimum values  zero: $f_t(\theta_t)=0$, and $g_t(\xi_t)=0$ for all $t$. This is  without loss of generality because by strong convexity and smoothness, $f_t$, $g_t$ have minimum values, and by subtracting the minimum value, we can let $f_t, g_t$ have minimum value 0.

\paragraph{Strong convexity.} Since $\tilde g_t$ is convex, we only need to prove that $\sum_t \tilde f_t(\mbf z)$ is strongly convex then the sum $C(\mbf z)$ is strongly convex because the sum of convex functions and a strongly convex function is strongly convex.

In particular, by the strong convexity of $f_t(x_t)$, we have the following result: for any $\mbf z, \mbf z'\in \R^{Nm}$ and $x_t=M^{x_t}\mbf z$, $x_t'=M^{x_t}\mbf z'$:
	\begin{align*}
	\tilde f_t(\mbf z')&- \tilde f_t(\mbf z) - \langle \nabla \tilde f_t(\mbf z), \mbf z' -\mbf z \rangle - \frac{\mu_f}{2}\| z'_t - z_t \|^2\\
	& = \tilde f_t(\mbf z')- \tilde f_t(\mbf z) - \langle (M^{x_t})^\top  \nabla f_t(x_t), \mbf z' -\mbf z \rangle - \frac{\mu_f}{2}\| z'_t - z_t \|^2\\
	& =  \tilde f_t(\mbf z')- \tilde f_t(\mbf z) - \langle  \nabla f_t(x_t), M^{x_t}(\mbf z' -\mbf z) \rangle - \frac{\mu_f}{2}\|z'_t - z_t \|^2\\
	&= \tilde f_t(\mbf z')- \tilde f_t(\mbf z) - \langle  \nabla f_t(x_t),  x'_t -x_t \rangle - \frac{\mu_f}{2}\| z'_t - z_t \|^2   \\
	& \geq f_t(x'_t)-f_t(x_t )- \langle  \nabla f_t(x_t),  x'_t -x_t\rangle - \frac{\mu_f}{2}\| x'_t - x_t \|^2  \geq 0 
	\end{align*}
where the first equality is by the chain rule, the second equality is by the definition of inner product, the third equality is by the definition of $x_t, x_t'$, the first inequality is by $\tilde f_t(z)=f_t(x)$ and $z_t=(x_t^{k_1},\dots, x_t^{k_m})^\top $, and the last inequality is because
$f_t(x_t) $ is $\mu_f$ strongly convex.

Summing over $t$ on both sides of the inequality results in the strong convexity of $\sum_t \tilde f_t(\mbf z)$:
\begin{align*}
	&\sum_{t=1}^{N}\left[\tilde f_t(\mbf z')- \tilde f_t(\mbf z) - \langle \nabla \tilde f_t(\mbf z), \mbf z' -\mbf z \rangle - \frac{\mu_f}{2}\| z'_t - z_t \|^2 \right]  \\
	=& \sum_{t=1}^{N}\tilde f_t(\mbf z')- \sum_{t=1}^{N}\tilde f_t(\mbf z) - \langle \nabla \sum_{t=1}^{N}\tilde f_t(\mbf z), \mbf z' -\mbf z \rangle - \frac{\mu_f}{2}\| \mbf z' - \mbf z \|^2 \geq 0	 
	\end{align*}
	Consequently, $C(\mbf z)$ is  strongly convex with parameter at least $\mu_f$  by the convexity of $\tilde g_t$.

\paragraph{Smoothness.} 
We will prove the smoothness by considering  $ \tilde f_t(\mbf z)$ and $ \tilde g_t(\mbf z)$ respectively. 
	
	Firstly, let's consider $\tilde f_t(\mbf z)$. Similar to the proof for strong convexity, we use the smoothness of $f_t(x_t)$. For any $\mbf z, \mbf z'$, and $x_t=M^{x_t}\mbf z$, $x_t'=M^{x_t}\mbf z'$,  we can show that
	\begin{align*}
	\tilde f_t(\mbf z')& = f_t( x'_t) \leq f_t(x_t)+ \langle \nabla f_t(x_t),  x'_t-x_t\rangle + \frac{l_f}{2}\| x'_t-x_t\|^2\\
& \leq \tilde f_t(\mbf z) + \langle \nabla \tilde f_t(\mbf z), \mbf z'-\mbf z \rangle + \frac{l_f}{2}(\| z'_{t-p+1}-z_{t-p+1}\|^2 + \dots + \| z'_t-z_t\|^2 )
	\end{align*}
where the second inequality is by $x_t=M^{x_t}\mbf z$ and the chain rule and \eqref{equ: xt's representation by z}. 
	
	Secondly, we consider $\tilde g_t(z)$ in a similar way. For any $\mbf z, \mbf z'$, and $u_t=M^{u_t}\mbf z$, $u_t'=M^{u_t}\mbf z'$, we have
	\begin{align*}
 \tilde g_t(\mbf z')	&= g_t( u'_t)\leq g_t(u_t)+  \langle \nabla g_t(u_t),  u'_t-u_t\rangle + \frac{l_g}{2}\|u'_t- u_t\|^2 \\
	& =\tilde g_t(\mbf z)+  \langle  (M^{u_t})^\top  \nabla g_t(u_t), \mbf z'-\mbf z\rangle + \frac{l_g}{2}\|u'_t- u_t\|^2  \\
	& = \tilde g_t(\mbf z)+  \langle \nabla \tilde g_t(\mbf z), \mbf z-\mbf z\rangle + \frac{l_g}{2}\|u'_t- u_t\|^2  
		\end{align*}
		Since $u_t= z_{t+1}-A(\I, :) x_t=[I_m,-A(\I, :)](z_{t+1}^\top ,x_t^\top )^\top $, we have that 
		\begin{align*}
 \frac{l_g}{2}\|u'_t- u_t\|^2  	& \leq  \frac{l_g}{2}\|[I_m,-A(\I, :)]\left[ ( (z_{t+1}')^\top , (x'_t)^\top )^\top  - ( z_{t+1}^\top ,  x^\top _t)^\top  \right] \|^2 \\
	& \leq  \frac{l_g}{2}\|[I_m,-A(\I, :)]\|^2 (\|z_{t+1}- z'_{t+1}\|^2+ \|x_t -  x'_t\|^2)\\
	& \leq  \frac{l_g}{2}\|[I_m,-A(\I, :)]\|^2 (\|z_{t+1}-z'_{t+1}\|^2+ \dots + \|z_{t-p+1}-z'_{t-p+1}\|^2)
	\end{align*}
	Finally, by summing $\tilde f_t(\mbf z'), \tilde g_t(\mbf z')$'s inequalities above over all $t$, we have
		\begin{align*}
	&C(\mbf z') \leq C(\mbf z) + \langle \nabla C(\mbf z), \mbf z'-\mbf z\rangle + (p l_f + (p+1)l_g \|[I_m,-A(\I, :)]\|^2)/2 \|\mbf z'-\mbf z\|^2
	\end{align*}
 Thus, we have proved the smoothness of $C(\mbf z)$.

\section{Proof of Theorem \ref{thm: initialization}}\label{append: initial}
Remember that $\text{Regret}(FOSS) =  J(FOSS)-J^*$. To  bound the regret, we let the sum of the optimal steady state costs, $\sum_{t=0}^{N-1} \lambda^e_t$, be a middle ground and bound $J(FOSS)-\sum_{t=0}^{N-1}\lambda^e_t$ and 
$\sum_{t=0}^{N-1}\lambda^e_t-J^*$ in Lemma \ref{lem: J(varphi)-sum lambda e} and Lemma \ref{lem: sum lambda e -J*} respectively. Then, the regret bound can be obtained by combining the two bounds.
\begin{lemma}[Bound on $J(FOSS)-\sum_{t=0}^{N-1}\lambda_t^e$]\label{lem: J(varphi)-sum lambda e}
 Let  $x_t(0)$ denote the state determined by FOSS. 
\begin{align*}
     J(FOSS)-\sum_{t=0}^{N-1}\lambda^e_t \leq c_1 \sum_{t=0}^{N-1}\|x_t^e -x_{t-1}^e\| + f_N(x_N(0))=O\left(  \sum_{t=0}^{N}\|x_t^e -x_{t-1}^e\|\right)
\end{align*}
   where we define $x_N^e\coloneqq \delta_N$, $x_{-1}^e\coloneqq x_0=0$ for simplicity of  notation, $c_1$ is a constant that does not depend on $N, W$ and big $O$ hides a constant that does not depend on $N, W$.
\end{lemma}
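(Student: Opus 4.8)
\textbf{Proof proposal for Lemma~\ref{lem: J(varphi)-sum lambda e}.}

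The plan is to compare the cost incurred by FOSS stage-by-stage against the optimal steady-state costs $\lambda_t^e = f_t(x_t^e)+g_t(u_t^e)$, using the bias functions $h_t^e$ as a telescoping potential. First I would recall that FOSS sets $z_{t+1}=(x_t^{e,k_1},\dots,x_t^{e,k_m})^\top$, so the state $x_{t+1}(0)$ generated by FOSS agrees with $x_{t+1}^e$ in the coordinates indexed by $\I$, but in the other coordinates it inherits older values $x_{t+1}^i = x_t^{i+1}(0)$, i.e. $x_{t+1}(0)$ is built (via~\eqref{equ: xt's representation by z}) from the $m$ vectors $z_{t-p_j+2}^j,\dots,z_{t+1}^j$, which correspond to $x^e$ at times ranging over the last $p$ steps. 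Hence $\|x_{t}(0)-x_t^e\|$ is controlled by $\sum_{s=t-p+1}^{t}\|x_s^e - x_{s-1}^e\|$ (a finite sum of consecutive differences, with the convention $x_{-1}^e=0$). Similarly the control $u_t(0) = z_{t+1}(0)-A(\I,:)x_t(0)$ differs from $u_t^e = x_{t+1}^{e,\I}-A(\I,:)x_t^e$ only through $x_t(0)-x_t^e$, so $\|u_t(0)-u_t^e\|$ is bounded by the same local variation quantity times $\|[I_m,-A(\I,:)]\|$.

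Next, I would bound the per-stage cost gap. By Lipschitz smoothness of $f_t$ (Assumption~\ref{ass: general Qt, Rt factors}) together with the uniform bound on $\|x_t(0)\|$ and $\|x_t^e\|$ — the latter following from Assumption~\ref{ass: theta xi bdd} since $x_t^e$ depends continuously on $\theta_t,\xi_t$ which are uniformly bounded — we have $|f_t(x_t(0))-f_t(x_t^e)| \le C\|x_t(0)-x_t^e\|$ for a constant $C$ independent of $N,W$; likewise for $g_t$. Summing over $t=0,\dots,N-1$ and using the local-variation bound above, the double sum $\sum_{t=0}^{N-1}\sum_{s=t-p+1}^{t}\|x_s^e-x_{s-1}^e\|$ collapses to at most $p\sum_{t=0}^{N-1}\|x_t^e-x_{t-1}^e\|$, which gives
\begin{align*}
\sum_{t=0}^{N-1}\bigl(f_t(x_t(0))+g_t(u_t(0))\bigr) - \sum_{t=0}^{N-1}\lambda_t^e \;\le\; c_1 \sum_{t=0}^{N-1}\|x_t^e - x_{t-1}^e\|,
\end{align*}
with $c_1$ absorbing $C$, $p$, and $\|[I_m,-A(\I,:)]\|$. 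Adding the terminal cost $f_N(x_N(0))$ — which by smoothness and the uniform bounds is $O(\|x_N(0)-x_N^e\|) + f_N(x_N^e)$, and with the convention $x_N^e=\theta_N$ one can fold $f_N(x_N(0))$ into the stated right-hand side as an $O(\sum_{t=0}^N\|x_t^e-x_{t-1}^e\|)$ term — yields the claim. (I may not even need the bias functions $h_t^e$ here; they enter in the companion Lemma~\ref{lem: sum lambda e -J*}.)

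The main obstacle I anticipate is the bookkeeping in the first step: carefully tracking how each coordinate of $x_t(0)$ is a value of $x^e$ at some earlier time (because of the $p$-step delay encoded in the canonical form) and thereby expressing $\|x_t(0)-x_t^e\|$ as a sum of consecutive $x^e$-increments with the correct index ranges and boundary conditions ($x_{-1}^e=x_0=0$, $z_t=0$ for $t\le 0$). Once that telescoping structure is pinned down, the smoothness/boundedness estimates are routine. A secondary point requiring care is justifying the uniform bound $\|x_t^e\|\le \text{const}$ from Assumption~\ref{ass: theta xi bdd}: this uses that $(x^e,u^e)$ solves a strongly convex problem $\min_{x=Ax+Bu}(f_t(x)+g_t(u))$ whose minimizer moves Lipschitz-continuously (in fact boundedly) with the data $\theta_t,\xi_t$, so uniform bounds on $\theta_t,\xi_t$ transfer to uniform bounds on $x_t^e,u_t^e$.
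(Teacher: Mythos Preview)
Your proposal is correct and follows essentially the same route as the paper: both decompose $J(FOSS)-\sum_t\lambda_t^e$ stage by stage, use the canonical-form structure to show $\|x_t(0)-x_t^e\|$ is controlled by a short sum of consecutive increments $\|x_s^e-x_{s-1}^e\|$, invoke uniform bounds on $x_t^e,u_t^e,x_t(0),u_t(0)$ to turn smoothness into a Lipschitz bound on the stage-cost differences, and then collapse the double sum; the terminal term and the $\|x_t^e\|$ uniform bound are handled just as you describe. The only cosmetic difference is that the paper bounds $f_t(x_t(0))-f_t(x_t^e)$ via convexity plus a gradient bound rather than directly via smoothness on a bounded set, and your factor $\|[I_m,-A(\I,:)]\|$ for $u_t(0)-u_t^e$ can in fact be tightened to $\|A(\I,:)\|$ since $z_{t+1}(0)=z_t^e$ exactly.
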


\begin{lemma}[Bound on $\sum_{t=0}^{N-1}\lambda^e_t-J^*$]\label{lem: sum lambda e -J*}
Let $h_t^e(x)$ denote a solution to the  Bellman equations under cost $f_t(x)+g_t(u)$. Let $\{ x_t^*\}$ denote the optimal state trajectory to the offline optimal control \eqref{equ: control problem}. 
\begin{align*}
     \sum_{t=0}^{N-1}\lambda^e_t-J^*\leq \sum_{t=1}^{N}(h^e_{t-1}(x_{t}^*)-h^e_{t}(x_{t}^*))-h_0^e(x_0) = \sum_{t=0}^{N}(h^e_{t-1}(x_{t}^*)-h^e_{t}(x_{t}^*))
\end{align*}
   where we define $h_N^e(x)\coloneqq f_N(x)$, $h_{-1}^e(x)\coloneqq 0$ and $x_0^*\coloneqq x_0$ for simplicity of  notation. 
\end{lemma}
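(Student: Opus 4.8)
The plan is to use the Bellman equation only as an inequality, evaluated along the offline optimal trajectory, and then telescope. For each $t$ with $0\le t\le N-1$, the Bellman equation $h_t^e(x)+\lambda_t^e=\min_u(f_t(x)+g_t(u)+h_t^e(Ax+Bu))$ implies, for \emph{every} pair $(x,u)$, the one-sided bound $h_t^e(x)+\lambda_t^e\le f_t(x)+g_t(u)+h_t^e(Ax+Bu)$. First I would instantiate this at the optimal state--control pair $(x,u)=(x_t^*,u_t^*)$; since $x_{t+1}^*=Ax_t^*+Bu_t^*$, this yields $\lambda_t^e\le f_t(x_t^*)+g_t(u_t^*)+h_t^e(x_{t+1}^*)-h_t^e(x_t^*)$, i.e.\ an upper bound on the steady-state cost in terms of the true stage cost plus a one-step difference of the bias function.

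Next I would sum this bound over $t=0,\dots,N-1$. The stage-cost terms assemble into $\sum_{t=0}^{N-1}[f_t(x_t^*)+g_t(u_t^*)]=J^*-f_N(x_N^*)$, using the definition $J^*=\sum_{t=0}^{N-1}[f_t(x_t^*)+g_t(u_t^*)]+f_N(x_N^*)$ together with the optimality of $\{x_t^*,u_t^*\}$. Rearranging gives $\sum_{t=0}^{N-1}\lambda_t^e-J^*\le -f_N(x_N^*)+\sum_{t=0}^{N-1}[h_t^e(x_{t+1}^*)-h_t^e(x_t^*)]$, so the entire statement reduces to rewriting the remaining telescoping-type sum over the bias functions.

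The last step is purely bookkeeping on indices, and is the only place requiring care. Shifting the index in $\sum_{t=0}^{N-1}h_t^e(x_{t+1}^*)=\sum_{t=1}^{N}h_{t-1}^e(x_t^*)$ and splitting off the boundary terms, I would fold the leftover $f_N(x_N^*)$ into the sum via the convention $h_N^e(x):=f_N(x)$ (so the $t=N$ summand becomes $h_{N-1}^e(x_N^*)-h_N^e(x_N^*)$) and peel off $h_0^e(x_0^*)=h_0^e(x_0)$ (using $x_0^*=x_0$) as the trailing term. This produces exactly $\sum_{t=1}^{N}(h_{t-1}^e(x_t^*)-h_t^e(x_t^*))-h_0^e(x_0)$, the first claimed bound. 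The final equality in the statement is then immediate: the convention $h_{-1}^e(x):=0$ makes the $t=0$ summand of $\sum_{t=0}^{N}(h_{t-1}^e(x_t^*)-h_t^e(x_t^*))$ equal to $-h_0^e(x_0)$, which absorbs the trailing term. The main obstacle is not conceptual but in keeping the index shift and the boundary conventions ($h_N^e=f_N$, $h_{-1}^e=0$, $x_0^*=x_0$) mutually consistent; a single displayed computation tracking the two sums $\sum h_t^e(x_{t+1}^*)$ and $\sum h_t^e(x_t^*)$ side by side will make the cancellation transparent.
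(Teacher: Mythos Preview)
Your proof is correct and, at its core, rests on the same inequality the paper uses: the Bellman equation $h_t^e(x)+\lambda_t^e=\min_u\big(f_t(x)+g_t(u)+h_t^e(Ax+Bu)\big)$ evaluated as a one-sided bound along the offline optimal trajectory, followed by summation and index bookkeeping. Your handling of the boundary conventions $h_N^e=f_N$, $h_{-1}^e=0$, $x_0^*=x_0$ is accurate.

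The paper's presentation is organized differently: it introduces auxiliary functions $S_k(x)=h_k^e(x)+\sum_{t=k}^{N-1}\lambda_t^e$ and the optimal cost-to-go $V_k(x)$, establishes recursive equations for each via the Bellman operator, and then derives a recursive inequality for $S_k(x_k^*)-V_k(x_k^*)$ by plugging the optimal policy $\pi_k^*$ (which is optimal for $V_k$ but only feasible for $S_k$) into the $S_k$ recursion. Telescoping that recursion yields the bound. If you unwind the paper's inequality by substituting the definition of $S_k$, it collapses to exactly your per-step inequality $h_t^e(x_t^*)+\lambda_t^e\le f_t(x_t^*)+g_t(u_t^*)+h_t^e(x_{t+1}^*)$. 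So the two arguments are equivalent in substance; yours is the more elementary route, bypassing the $S_k$/$V_k$ scaffolding entirely. The paper's framing has the mild advantage of making the comparison ``shifted bias functions versus true value function'' explicit, which can generalize more readily, but for this lemma your direct approach is cleaner and loses nothing.
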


Then, we can complete the proof by applying 
 Lemma \ref{lem: J(varphi)-sum lambda e} and \ref{lem: sum lambda e -J*}:
\begin{align*}
J(FOSS)-J^*& =J(FOSS)-\sum_{t=0}^{N-1}\lambda^e_t + \sum_{t=0}^{N-1}\lambda^e_t-J^*\\
&= O\left(\sum_{t=0}^{N}(\| x_{t-1}^e- x_t^e \|  + h^e_{t-1}(x_{t}^*)-h^e_{t}(x_{t}^*))\right)
\end{align*}

In the following, we will prove Lemma \ref{lem: J(varphi)-sum lambda e} and \ref{lem: sum lambda e -J*} respectively. For simplicity, 	 we  only consider cost function $f_t, g_t$ with minimum values zero: $f_t(\theta_t)=0$, and $g_t(\xi_t)=0$ for all $t$. There is no loss of generality because by strong convexity and smoothness, $f_t$, $g_t$ have minimum values, and by subtracting the minimum value, we can let $f_t, g_t$ have minimum value 0.

\subsection{Proof of Lemma \ref{lem: J(varphi)-sum lambda e}.}

Notice that $J(FOSS)= \sum_{t=0}^{N-1}(f_t(x_t(0))+g_t(u_t(0)) ) + f_N(x_N(0))$ and $\sum_{t=0}^{N-1}\lambda_t^e =\sum_{t=0}^{N-1}(f_t(x_t^e)+g_t(u_t^e) ) $. Thus, it suffices to bound $f_t(x_t(0))-f_t(x_t^e)$ and $g_t(u_t(0))-g_t(u_t^e)$ for $0\leq t \leq N-1$. We will first focus on $f_t(x_t(0))-f_t(x_t^e)$, then bound $g_t(u_t(0))-g_t(u_t^e)$ in the same way.

For $0\leq t\leq N-1$, by the convexity of $f_t$, and the property of $L_2$ norm,
\begin{align}\label{equ: bdd ft - ft by convexty}
    f_t(x_t(0))-f_t(x_t^e)& \leq \langle \nabla f_t(x_t(0)), x_t(0)-x_t^e\rangle \leq \| \nabla f_t(x_t(0)) \| \| x_t(0)-x_t^e \|
\end{align}
In the following, we will bound $\| \nabla f_t(x_t(0)) \|$ and $ \| x_t(0)-x_t^e \|$ respectively. 

Firstly, we provide a bound on $\| \nabla f_t(x_t(0)) \|$:
\begin{align}\label{equ: bdd grad f}
    \| \nabla f_t(x_t(0)) \|= \| \nabla f_t(x_t(0)) -\nabla f_t(\theta_t)\| \leq l_f \| x_t(0)-\theta_t\| \leq l_f(\sqrt n \bar x^e + \bar \theta) 
\end{align}
where the first equality is because $\theta_t$ is the global minimizer of $f_t$, and first inequality is by Lipschitz smoothness, the second inequality is by $\|\theta_t \|\leq \bar \theta$ according to Assumption \ref{ass: theta xi bdd} and by $\| x_t(0)\|\leq \sqrt n \bar x^e\|$ proved in the following lemma. 

\begin{lemma}[Uniform upper bounds on $x_t^e, u_t^e, x_t(0), u_t(0)$]\label{lem: bdd xte, ute, xt(0), ut(0)}
	There exist $\bar x^e$ and $\bar u^e$ that are independent of $N, W$, such that $\|x_t^e\| \leq \bar x^e$ and $\|u_t^e \|\leq \bar u^e$ for all $0\leq t\leq N-1$. Moreover, $\|x_t(0)\| \leq \sqrt{n}\bar x^e$ for $0\leq t \leq N$ and  $\|u_t(0)\| \leq \sqrt n \bar u^e$ for $0\leq t \leq N-1$, where  $x_t(0), u_t(0)$ denote the state and control at $t$ determined by FOSS.
\end{lemma}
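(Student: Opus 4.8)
The plan is to establish, in order, a uniform bound $\bar x^e$ on $\|x_t^e\|$; a uniform bound $\bar u^e$ on $\|u_t^e\|$ obtained from the steady-state relation; and then to propagate both through the FOSS construction to $x_t(0)$ and $u_t(0)$. As before I normalize $f_t(\theta_t)=g_t(\xi_t)=0$. For the first bound, the key observation is that $(x,u)=(0,0)$ is always an admissible steady state, so the optimal steady-state cost obeys $\lambda_t^e=f_t(x_t^e)+g_t(u_t^e)\le f_t(0)+g_t(0)$. Since $\nabla f_t(\theta_t)=0$ and $\nabla g_t(\xi_t)=0$, Lipschitz smoothness gives $f_t(0)\le\frac{l_f}{2}\|\theta_t\|^2\le\frac{l_f}{2}\bar\theta^2$ and $g_t(0)\le\frac{l_g}{2}\bar\xi^2$ by Assumption~\ref{ass: theta xi bdd}, hence $\lambda_t^e\le\frac12(l_f\bar\theta^2+l_g\bar\xi^2)=:\bar\lambda$, a constant independent of $N,W,t$. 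Using $g_t\ge 0$ and $\mu_f$-strong convexity of $f_t$ about $\theta_t$, $\frac{\mu_f}{2}\|x_t^e-\theta_t\|^2\le f_t(x_t^e)\le\lambda_t^e\le\bar\lambda$, so $\|x_t^e\|\le\bar\theta+\sqrt{2\bar\lambda/\mu_f}=:\bar x^e$.

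For $\|u_t^e\|$ I use the canonical coordinates. At a steady state, $x^e=Ax^e+Bu^e$, and expressing this through \eqref{equ: zt_def} and \eqref{equ: ut= zt+1- mr Axt} with $z_{t+1}=z_t=(x^e)^{\I}$ yields $u_t^e=(x_t^e)^{\I}-A(\I,:)x_t^e$, a fixed linear function of $x_t^e$. Therefore $\|u_t^e\|\le(1+\|A(\I,:)\|)\|x_t^e\|\le(1+\|A(\I,:)\|)\bar x^e=:\bar u^e$, again independent of $N,W$.

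For the FOSS iterates, recall that FOSS sets $z_s(0)=(x_{s-1}^e)^{\I}$ for $s\ge 1$ and $z_s(0)=0$ for $s\le 0$, so every coordinate of every $z_s(0)$ has absolute value at most $\bar x^e$. Substituting into the representation \eqref{equ: xt's representation by z}, each of the $n$ coordinates of $x_t(0)$ equals some coordinate $z_s^j(0)$ with $s\le t$, whence $\|x_t(0)\|\le\sqrt n\,\bar x^e$ for $0\le t\le N$. Finally, by Step~3 of Algorithm~\ref{alg:RHGD-LQT} (equivalently \eqref{equ: ut= zt+1- mr Axt}), $u_t(0)=z_{t+1}(0)-A(\I,:)x_t(0)$, so $\|u_t(0)\|\le\|z_{t+1}(0)\|+\|A(\I,:)\|\,\|x_t(0)\|\le\bar x^e+\sqrt n\,\|A(\I,:)\|\,\bar x^e\le\sqrt n\,\bar u^e$, using $\bar u^e=(1+\|A(\I,:)\|)\bar x^e$ and $\bar x^e\le\sqrt n\,\bar x^e$.

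The only genuine obstacle is the first step: obtaining a bound on $\lambda_t^e$ that is uniform in $t$ (and hence in $N$). This is resolved cleanly by noting that $(0,0)$ is always an admissible steady state, whose cost is controlled purely by the smoothness of $f_t,g_t$ and the uniform bounds $\bar\theta,\bar\xi$ on their minimizers; everything afterward is routine bookkeeping with the canonical-form coordinate relabeling, during which one checks that every constant depends only on $\mu_f,l_f,l_g,\bar\theta,\bar\xi,n$ and the matrix $A$, never on $N$ or $W$.
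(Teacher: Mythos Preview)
Your proof is correct and follows essentially the same route as the paper's: bound $\lambda_t^e$ by the cost of an admissible competitor steady state, extract $\|x_t^e-\theta_t\|$ via strong convexity, and then propagate through the canonical-form coordinates to $u_t^e$, $x_t(0)$, $u_t(0)$. The only difference is cosmetic: the paper uses the competitor $\hat x_t=(\theta_t^1,\dots,\theta_t^1)^\top$ with $\hat u_t=\hat x_t^{\I}-A(\I,:)\hat x_t$, whereas your choice of $(0,0)$ is simpler and gives a slightly cleaner constant.
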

The proof is technical and is deferred to Appendix \ref{append: proof of Lem xte bdd}.

Secondly, we provide a bound on $ \| x_t(0)-x_t^e \|$. The proof relies on the expressions of the steady state $x_t^e$ and the initialized state $x_t(0)$ of a canonical-form system.

\begin{lemma}[The steady state and the initialized state of canonical-form systems]\label{lem: x e x(0) form}
    Consider a canonical-form system: $x_{t+1}=Ax_t+B u_t$. 
    \begin{enumerate}
        \item[(a)] Any steady state $(x,u)$ is in the form of 
        \begin{align*}
x&=(\underbrace{z^1, \dots, z^1}_{p_1}, \underbrace{z^2, \dots, z^2}_{p_2}, \dots, \underbrace{z^m, \dots, z^m}_{p_m})^\top \\
u& = (z^1, \dots, z^m)^\top -A(\I, :) x  
\end{align*}
for some $z^1, \dots, z^m \in \R$.
Let $z=(z^1, \dots, z^m)^\top $. 
For the optimal steady state with respect to cost $f_t+g_t$, we denote the corresponding $z$ as $z_t^e$, and the optimal steady state can be represented as
 $x_t^e=(z_t^{e,1}, \dots, z_t^{e,1}, z_t^{e,2}, \dots, z_t^{e,2}, \dots, z_t^{e,m}, \dots, z_t^{e,m})^\top $ and $u_t^e= z_t^e - A(\I, :) x_t^e$ for $0\leq t \leq N-1$. 
\item[(b)] By FOSS initialization, $z_{t+1}(0)=z_t^e$, and $x_t(0)$, $u_t(0)$ satisfy
\begin{align*}
    x_t(0)&=(\underbrace{z_{t-p_1}^{e,1}, \dots, z_{t-1}^{e,1}}_{p_1}, \underbrace{z_{t-p_2}^{e,2},\dots, z_{t-1}^{e,2}}_{p_2}, \dots, \underbrace{z_{t-p_m}^{e,m},\dots, z_{t-1}^{e,m}}_{p_m}), & 0 \leq t \leq N\\
    u_t(0) & = z_t^e -A(\I, :) x_t(0)& 0 \leq t \leq N-1
\end{align*}
where $z_t^e=0$ for $t\leq -1$.
\end{enumerate}
\end{lemma}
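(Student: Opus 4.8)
\textbf{Proof proposal for Lemma \ref{lem: x e x(0) form}.}

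The plan is to prove both parts by directly unpacking the canonical-form structure of $(A,B)$ from Definition \ref{def: canonical form} together with the reparameterization identities \eqref{equ: xt's representation by z} and \eqref{equ: ut= zt+1- mr Axt}. For part (a), I would start from the fixed-point equation $x = Ax + Bu$. Because the canonical form of $A$ has rows indexed outside $\I$ acting as shift rows — i.e. $(Ax)^i = x^{i+1}$ for $i \notin \I$ — the equality $x^i = (Ax+Bu)^i = x^{i+1}$ holds for every $i\notin\I$ (the $B$ entries in those rows are zero). Chaining these shift identities within each of the $m$ blocks forces $x$ to be piecewise constant: the $j$-th block of length $p_j$ must have all entries equal to a common scalar, which I name $z^j$. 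This gives exactly the stated form of $x$. For the control, I would simply read off the $\I$-rows of the fixed-point equation: $x^{k_j} = (Ax)^{k_j} + u^{k_j}$ (since the $\I$-rows of $B$ are the unit rows), so $u = (x^{k_1},\dots,x^{k_m})^\top - A(\I,:)x = z - A(\I,:)x$ with $z = (z^1,\dots,z^m)^\top$. Specializing to the minimizer of $f_t + g_t$ over steady states then just names the optimizing $z$ as $z_t^e$ and substitutes, yielding the claimed expressions for $x_t^e$ and $u_t^e$.

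For part (b), recall that FOSS sets $z_{t+1}(0) = (x_t^{e,k_1},\dots,x_t^{e,k_m})^\top$, and by part (a) this vector is precisely $z_t^e$; hence $z_{t+1}(0) = z_t^e$, equivalently $z_s(0) = z_{s-1}^e$ for all $s$, with the convention $z_t^e = 0$ for $t\le -1$ coming from $x_0 = 0$ (so $z_s(0) = 0$ for $s\le 0$, consistent with \eqref{equ: xt's representation by z}). Now I would just substitute this into the general state representation \eqref{equ: xt's representation by z}: $x_t(0)$ has $j$-th block $(z_{t-p_j+1}^j(0),\dots,z_t^j(0))$, and replacing each $z_s^j(0)$ by $z_{s-1}^{e,j}$ turns this block into $(z_{t-p_j}^{e,j},\dots,z_{t-1}^{e,j})$, which is exactly the stated formula. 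The control formula follows immediately from \eqref{equ: ut= zt+1- mr Axt}: $u_t(0) = z_{t+1}(0) - A(\I,:)x_t(0) = z_t^e - A(\I,:)x_t(0)$.

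I do not anticipate a genuine obstacle here — the lemma is essentially a bookkeeping consequence of the canonical form — but the one place to be careful is the block-shifting argument in part (a): one must check that the shift relation $x^i = x^{i+1}$ really does hold for \emph{every} index $i$ strictly inside a block and that the block boundaries are exactly the indices in $\I$, so that the constancy propagates within blocks but not across them. This is where the precise placement of the $1$'s and $*$'s in the Definition \ref{def: canonical form} matrices matters, and I would write that step out carefully (indexing the $j$-th block as $\{k_{j-1}+1,\dots,k_j\}$ and noting $p_j = k_j - k_{j-1}$) rather than wave at it. Everything else is direct substitution into \eqref{equ: xt's representation by z} and \eqref{equ: ut= zt+1- mr Axt}.
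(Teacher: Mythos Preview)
Your proposal is correct and follows essentially the same approach as the paper: part (a) is derived from the canonical-form shift structure applied to the fixed-point equation $x=Ax+Bu$, and part (b) is obtained by substituting the FOSS rule $z_{t+1}(0)=x_t^{e,\I}=z_t^e$ into the representation \eqref{equ: xt's representation by z}--\eqref{equ: ut= zt+1- mr Axt}. The paper's own proof is extremely terse (it dispatches (a) in one sentence ``by the definition of the canonical form and the definition of the steady state''), so your version simply makes explicit the block-by-block chaining $x^i=x^{i+1}$ for $i\notin\I$ that the paper leaves implicit; there is no substantive difference in method.
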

\begin{proof}
    \begin{enumerate}
        \item[(a)] This is by the definition of the canonical form and the definition of the steady state.
        \item[(b)] By the initialization, $z_t(0)=x^{e,\I}_{t-1}= z_{t-1}^e$. By the relation between $z_t(0)$ and $x_t(0)$, $u_t(0)$, we have $x_t^{\I}(0)= z_t(0)=z_{t-1}^e$, and $x_t^{\I-1}(0)=z_{t-1}(0)=z_{t-2}^e$, so on and so forth. This proves the structure of $x_t(0)$. The structure of $u_t(0)$ is  because $u_t(0)=z_{t+1}(0)-A(\I, :) x_t(0)= z_t^e -A(\I, :) x_t(0)$
    \end{enumerate}
\end{proof}


By Lemma \ref{lem: x e x(0) form}, we can bound $ \| x_t(0)-x_t^e \|$ for $0\leq t \leq N-1$ by 
\begin{align}
     \| x_t(0)-x_t^e \|& \leq  \sqrt{ \|z_{t-1}^e-z_t^e\|^2 + \dots + \| z_{t-p}^e - z_t^e\|^2} \notag\\
     &\leq \sqrt{\|x_{t-1}^e-x_t^e\|^2 + \dots + \| x_{t-p}^e - x_t^e\|^2}\notag\\
     & \leq \|x_{t-1}^e-x_t^e\| + \dots + \| x_{t-p}^e - x_t^e\| \notag\\
     &\leq p(\|x_{t-1}^e-x_t^e\| + \dots + \| x_{t-p}^e - x_{t-p+1}^e\|) \label{equ: bdd xt(0)-xte}
\end{align}

Combining \eqref{equ: bdd ft - ft by convexty} \eqref{equ: bdd grad f} and \eqref{equ: bdd xt(0)-xte} yields
\begin{align}
   \sum_{t=0}^{N-1} f_t(x_t(0))-f_t(x_t^e)& \leq \sum_{t=0}^{N-1} \| \nabla f_t(x_t(0)) \| \| x_t(0)-x_t^e \|\notag \\
   &\leq \sum_{t=0}^{N-1}l_f(\sqrt n \bar x^e+\bar \theta ) p(\|x_{t-1}^e-x_t^e\| + \dots + \| x_{t-p}^e - x_{t-p+1}^e\|) \notag\\
   & \leq p^2 l_f(\sqrt n \bar x^e+\bar \theta ) \sum_{t=0}^{N-1}\|x_{t-1}^e-x_t^e\| \label{equ: sum of ft -ft's bdd}
\end{align}
Notice that the constant term $p^2 l_f(\sqrt n \bar x^e+\bar \theta )$ does not depend on $N, W$.

Similarly, we can provide a bound on $g_t(u_t(0))-g_t(u_t^e)$.
\begin{align}
    \sum_{t=0}^{N-1} g_t(u_t(0))-g_t(u_t^e)& \leq \sum_{t=0}^{N-1} \| \nabla g_t(u_t(0))\| \| u_t(0)-u_t^e\|  \notag\\
    & \leq \sum_{t=0}^{N-1} l_g \| u_t(0)-\xi_t\| \| u_t(0)-u_t^e\| \notag \\
    & \leq \sum_{t=0}^{N-1} l_g(\sqrt n \bar u^e+\bar \xi) \| A(\I, :) x_t(0)-A(\I, :) x_t^e\|\notag\\
    & \leq \sum_{t=0}^{N-1} l_g(\sqrt n \bar u^e+\bar \xi)\|A(\I, :)\| \|x_t(0)-x_t^e \| \notag\\
    & \leq p^2l_g(\sqrt n \bar u^e+\bar \xi)\|A(\I, :)\|\sum_{t=0}^{N-1}\|x_{t-1}^e-x_t^e\| \label{equ: sum of gt -gt's bdd}
\end{align}
where the first inequality is by the convexity, the second inequality is because $\xi_t$ is the global minimizer of $g_t$ and $g_t$ is $l_g$-smooth, the third inequality is by Assumption \ref{ass: theta xi bdd}, Lemma \ref{lem: bdd xte, ute, xt(0), ut(0)} and  Lemma \ref{lem: x e x(0) form}, the fifth inequality is by  \eqref{equ: bdd xt(0)-xte}. Notice that the constant term $p^2l_g(\sqrt n \bar u^e+\bar \xi)\|A(\I, :)\|$ does not depend on $N, W$.

By \eqref{equ: sum of ft -ft's bdd} and \eqref{equ: sum of gt -gt's bdd}, we complete the proof of the first inequality in the statement of Lemma \ref{lem: J(varphi)-sum lambda e}:
\begin{align*}
    J(FOSS)-\sum_{t=0}^{N-1}\lambda_t^e & \leq c_1 \sum_{t=0}^{N-1}\|x_{t-1}^e-x_t^e\| + f_N(x_N(0))
\end{align*}
where $c_1$ does not depend on $N$, $W$.

By defining $x_N^e = \theta_N$, we can bound $f_N(x_N(0))$ by $\| x_N(0)-x_N^e\|$ up to some constants because $f_N(x_N(0))=f_N(x_N(0))-f_N(\theta_N)  \leq \frac{l_f}{2}(\sqrt n \bar x^e+ \bar \theta) \| x_N(0)-x_N^e\|$.  By the same argument as in \eqref{equ: bdd xt(0)-xte}, we have $\| x_N(0)-x_N^e\| = O(\sum_{t=0}^{N}\| x_{t-1}^e- x_t^e \|)$, where the big $O$ hides some constant that does not depend on $N$, $W$. Consequently, 
 $$J(FOSS)-\sum_{t=0}^{N-1}\lambda_t^e =O\left(  \sum_{t=0}^{N}\|x_{t-1}^e-x_t^e\| \right)$$
 
 \qed

 \subsection{Proof of Lemma \ref{lem: sum lambda e -J*}.}
 The proof heavily relies on dynamic programming and the Bellman equations. For simplicity, we introduce a Bellman  operator  $\B(f+g, h)$ defined by $\B(f+g, h)(x)= \min_u (f(x)+g(u)+h(Ax+Bu))$. Now the Bellman equations can be written as $\B(f+g, h^e)(x)=h^e(x)+\lambda^e$ for any $x$.

 We define a sequence of auxiliary functions $S_k$:  $S_k(x)= h_k^e(x)+\sum_{t=k}^{N-1} \lambda_t^e$ for $k=0, \dots, N$, where  $h_N^e(x)=f_N(x)$. 
 
 We first provide a recursive equation for $S_k$. By Bellman equations, we have $h_k^e(x)+\lambda_k^e=\B(f_k+g_k, h_k^e)(x)$ for $0\leq k \leq N-1$. Let $\pi^e_k$ be the corresponding optimal control policy that solves the Bellman equations.  We have the following recursive relation for $S_k$  when $0\leq k \leq N-1$:
 \begin{align*}
     S_k&(x) = \B(f_k+g_k, S_{k+1}-h^e_{k+1}+h_k^e)(x) 
 \end{align*}
 where $S_N(x)=f_N(x)$.
 
 Further, 	let $V_k(x)$ denote the optimal cost-to-go function from $k$ to $N$, then we  obtain a recursive equation for $V_k$ by dynamic programming:
 \begin{align*}
     V_k&(x) = \B(f_k+g_k, V_{k+1})(x)= f_k(x)+g_k(\pi^*_k(x)) + V_{k+1}(Ax + B \pi^*_k(x))
 \end{align*}
	where $0\leq k \leq N-1$, and $\pi^*_k$ denotes the optimal control policy and $V_N(x)=f_N(x)$.
	
	Now, we are ready for a recursive inequality for $S_k(x_k^*)-V_k(x_k^*)$. Let $\{x_k^*\}$ denote the optimal trajectory, then $x_{k+1}^* = Ax_k^*+B\pi^*_k(x_k^*)$. For any $k=0,\dots, N-1$,  
	\begin{align*}
	S_k(x_k^*)-V_k(x_k^*)&= \B(f_k+g_k, S_{k+1}-h^e_{k+1}+h^e_k)(x_k^*)-\B(f_k+g_k, V_{k+1})(x_k^*)\\
	& \leq  f_k(x_k^*)+g_k(\pi^*_k(x_k^*)) + S_{k+1}(x_{k+1}^*)-h^e_{k+1}(x_{k+1}^*)+h_k^e(x_{k+1}^*)\\
	&  - (f_k(x_k^*)+g_k(\pi^*_k(x_k^*)) + V_{k+1}(x_{k+1}^*))\\
	 &=  S_{k+1}(x_{k+1}^*)-h^e_{k+1}(x_{k+1}^*)+h_k^e(x_{k+1}^*)-V_{k+1}(x_{k+1}^*)
	\end{align*}
	where the first inequality is because $\pi_k^*$ is not optimal for the Bellman operator $\B(f_k+g_k, S_{k+1}-h^e_{k+1}+h^e_k)(x_k^*)$.
	
	Summing over $k=0, \dots, N-1$ the recursive inequality for $S_k(x_k^*)-V_k(x_k^*)$ yields
	\begin{align*}
	&S_0(x_0)-V_0(x_0)\leq \sum_{k=0}^{N-1}(h^e_k(x_{k+1}^*)-h^e_{k+1}(x_{k+1}^*))
	\end{align*}
	By subtracting $h_0^e(x_0)$ on both sides,
	\begin{align*}
	&\sum_{t=0}^{N-1}\lambda^e_t-J^*\leq \sum_{k=0}^{N-1}(h^e_k(x_{k+1}^*)-h^e_{k+1}(x_{k+1}^*))-h^e_0(x_0)
	\end{align*}
	For the simplicity of notation, we define $h^e_{-1}(x_0)=0$ and $x_0^*=x_0$, then the bound can be written as
	\begin{align*}
	&\sum_{t=0}^{N-1}\lambda^e_t-J^*\leq  \sum_{k=0}^{N}(h^e_{k-1}(x_{k}^*)-h^e_{k}(x_{k}^*))
	\end{align*}
 
\qed

\subsection{Proof of Lemma \ref{lem: bdd xte, ute, xt(0), ut(0)}}\label{append: proof of Lem xte bdd}
The proof relies on the (strong) convexity and smoothness of the cost functions and the uniform upper bounds on $\theta_t, \xi_t$.

	First of all, suppose there exists $\bar x^e$ such that $\|x_t^e\|_2 \leq \bar x^e$ for all $0\leq t \leq N-1$. We will bound $u_t^e, x_t(0),u_t(0)$ by using $\bar x^e$. Notice that the optimal steady state and the corresponding steady control satisfy: $u_t^e= x_t^{e,\I}- A(\I, :)  x_t^e$. If we can bound $x_t^e$ by $\| x_t^e \| \leq \bar x^e$ for all $t$, $u_t^e$ can be bounded accordingly:
	\begin{align*}
	\| u_t^e\|& \leq \| x_t^{e,\I}\| + \| A(\I, :) x_t^e\|\leq \| x_t^e\| + \| A(\I, :)\| \|x_t^e \|\leq (1+ \|A(\I, :)\|)\bar x^e \eqqcolon \bar u^e
	\end{align*} 
	Moreover, $x_t(0)$ can also be bounded by $\bar x^e$ multiplied by some factors, because by Lemma \ref{lem: x e x(0) form}, $x_t(0)$'s each entry is determined by some entry of $x_s^e$ for $s< t$.  As  a result, for $0\leq t \leq N$
	\begin{align*}
	\| x_t(0)\|_2 \leq \sqrt n \| x_t(0)\|_\infty \leq \sqrt n \max_{s< t}\|x_s^e \|_\infty \leq \sqrt n\max_{s< t}\|x_s^e \|_2\leq  \sqrt n \bar x^e
	\end{align*}
	We can bound $u_t(0)$ by  noticing that $u_t(0)= x_{t+1}^{\I}(0)- A(\I, :) x_t(0)$ and 
	\begin{align*}
\| u_t(0)\|& \leq \| x_{t+1}^{\I}(0)\| + \| A(\I, :) x_t(0)\|\leq \| x_{t+1}(0)\| + \| A(\I, :)\| \|x_t(0)\|\\
&\leq (1+ \|A(\I, :)\|)\sqrt n\bar x^e= \sqrt n\bar u^e
\end{align*}

Next, it suffices to  prove $\|x_t^e\| \leq \bar x^e$ for all $t$ for some $\bar x^e$. To prove this bound, we construct another (suboptimal) steady state: $\hat x_t= (\theta^1_t, \dots, \theta^1_t)$. Let $\hat u_t= \hat x_t^{\I}-A(\I, :) \hat x_t$. It can be easily verified that $(\hat x_t, \hat u_t)$ is indeed a steady state of the canonical-form system. Moreover, $\hat x_t$ and $\hat u_t$ can be bounded similarly as follows.
\begin{align*}
\| \hat x_t \| &\leq \sqrt n | \theta_t^1| \leq \sqrt n \| \theta_t \|_\infty \leq \sqrt n \| \theta_t \| \leq \sqrt n \bar \theta \\
\| \hat u_t\|_2& \leq (1+ \|A(\I, :)\|) \| \hat x_t \| \leq (1+ \|A(\I, :)\|)\sqrt n \bar \theta  
\end{align*}

Now, we can bound $\|x_t^e-\theta_t\|$.
\begin{align*}
\frac{\mu}{2}\| x_t^e - \theta_t \|^2 & \leq f_t(x_t^e)- f_t(\theta_t) + g_t(u_t^e)-g_t(\xi_t)  \\
& \leq f_t(\hat x_t)- f_t(\theta_t) + g_t(\hat u_t)-g_t(\xi_t)  \\
& \leq \frac{l_f}{2} \| \hat x_t -\theta_t\|^2 + \frac{l_g}{2} \|\hat u_t- \xi_t\|^2 \\
& \leq l_f (\| \hat x_t \|^2 + \| \theta_t \|^2) + l_g (\| \hat u_t \|^2 + \|\xi_t\|^2) \\
& \leq l_f (n \bar \theta^2 + \bar \theta^2) + l_g(( (1+ \|A(\I, :)\|)\sqrt n \bar \theta  )^2 + \bar \xi)  \eqqcolon c_5
\end{align*}
where the first inequality is by $f_t$'s strong convexity and $g_t$'s convexity, the second inequality is because $(x_t^e, u_t^e)$ is an optimal steady state, the third inequality is by the smoothness and $\nabla f_t(\theta_t)=\nabla g_t(\xi_t)=0$, the last inequality is by the bounds of $\|\hat x_t\|, \|\hat u_t\|$, $\theta_t,$ and $\xi_t$.

As a result, we have $\| x_t^e - \theta_t\|\leq \sqrt{ 2c_5/\mu}$. Then,  we can bound $x_t^e$ by $\| x_t^e\| \leq \| \theta_t \| + \sqrt{ 2c_5/\mu} \leq  \bar \theta + \sqrt{ 2c_5/\mu}\eqqcolon \bar x^e$ for all $t$. It can be verified that $\bar x^e$ does not depend on $N, W$.

\qed

\section{Linear quadratic tracking}\label{append: LQT}
In this section, we will provide a regret bound in Corollary \ref{cor: LQT regret upper bdd} for the general LQT  defined in Example \ref{example: LQT}. Based on this, we prove Corollary \ref{cor: LQT Q, R not change}, which is a special case when $Q_t, R_t$ are not changing.

\subsection{Regret bound on the general online LQT problems}
Before the regret bound, we provide an important lemma to characterize the solution to the Bellman equations of the LQT problem.
\begin{lemma}\label{lem: he formula}
One solution to the Bellman equations with stage cost $\frac{1}{2}(x-\theta)^\top Q(x-\theta)+\frac{1}{2}u^\top Ru$ can be represented by
\begin{equation}\label{equ: he form}
    h^e(x)= \frac{1}{2}(x-\beta^e)^\top P^e(x-\beta^e)
\end{equation}
where $P^e$ denotes the solution to the discrete-time algebraic Riccati equation (DARE) with respect to $Q, R, A, B$
\begin{equation}\label{equ: DARE}
     P^e=Q+A^\top (P^e-P^eB(B^\top  P^eB+R)^{-1}B^\top  P^e)A 
\end{equation}
and $\beta^e= F\theta$ where $F$ is a matrix determined by $A, B, Q, R$. 
\end{lemma}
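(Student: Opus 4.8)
## Proof Plan for Lemma \ref{lem: he formula}

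The plan is to verify directly that the quadratic ansatz \eqref{equ: he form} satisfies the Bellman equation $\B(f+g,h^e)(x) = h^e(x) + \lambda^e$ and then read off the form of $\beta^e$. First I would write out the Bellman operator applied to a candidate quadratic function $h(x) = \frac12(x-\beta)^\top P(x-\beta)$: expanding the right-hand side $\min_u \left[\frac12(x-\theta)^\top Q(x-\theta) + \frac12 u^\top R u + \frac12(Ax+Bu-\beta)^\top P(Ax+Bu-\beta)\right]$ gives an unconstrained convex quadratic in $u$, so the minimizer is obtained in closed form by setting the gradient to zero: $u^*(x) = -(B^\top P B + R)^{-1} B^\top P (Ax - \beta)$. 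Substituting $u^*$ back collapses the expression into a quadratic in $x$ plus a constant.

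The second step is to match coefficients. The quadratic-in-$x$ part of $\B(f+g,h)(x)$ must equal $\frac12 x^\top P x$ (the quadratic part of $h(x)+\lambda^e$), which forces $P = P^e$ to solve the DARE \eqref{equ: DARE} — this is the standard completion-of-squares computation for the infinite-horizon LQR cost-to-go. Then the linear-in-$x$ terms must match: collecting the terms linear in $x$ coming from the cross terms $-x^\top Q\theta$, $-(Ax)^\top P^e(\text{stuff involving }\beta\text{ and }\theta)$, etc., yields a linear equation of the form $P^e\beta = (\text{matrix depending on }A,B,Q,R)\,\theta + (\text{matrix})\,P^e\beta$, i.e. a fixed-point relation whose unique solution (using invertibility of $P^e$, which holds since $Q>0$) is $\beta^e = F\theta$ for an explicit matrix $F$ built from $A,B,Q,R$. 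Finally the constant terms determine $\lambda^e$, and one checks this $\lambda^e$ is consistent with $f(x^e)+g(u^e)$; alternatively one invokes uniqueness of the bias function to conclude. I would also remark that the normalization $h^e(0)=0$ is generally \emph{not} satisfied by this $h^e$, but any solution to the Bellman equations is unique only up to an additive constant, so \eqref{equ: he form} is a legitimate representative, which is all that Theorem \ref{thm: initialization} needs.

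The main obstacle is the bookkeeping in the linear-term matching: after substituting $u^*(x)$, the terms linear in $x$ involve both $\theta$ and $\beta$ entangled through the closed-loop matrix $A_{cl} = A - B(B^\top P^e B + R)^{-1}B^\top P^e A$, and one needs to recognize that the resulting identity $(I - A_{cl}^\top)P^e\beta^e = Q\theta + (\text{correction terms from }u^*)$ has a clean solution. The key algebraic fact that makes $F$ well-defined is that $A_{cl}$ is stable (spectral radius $<1$) under controllability and $Q,R>0$, so $I - A_{cl}^\top$ is invertible; combined with invertibility of $P^e$ this gives $F = P^{e,-1}(I-A_{cl}^\top)^{-1}(\text{the matrix multiplying }\theta)$. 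I expect the derivation of the exact entries of $F$ to be somewhat tedious but entirely routine, so in the write-up I would state $F$ implicitly via this fixed-point equation rather than grinding out a closed form, and defer the full expansion to the verification that it indeed reproduces \eqref{equ: he form}.
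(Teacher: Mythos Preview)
Your proposal is correct and essentially matches the paper's argument: the paper proves an enhanced version (Proposition~\ref{prop: opt avg cost LQT}) whose Part~ii) is precisely the direct substitution-and-coefficient-matching you outline, arriving at $F=(P^e)^{-1}(I-(A-BK^e)^\top)^{-1}Q$ via the same fixed-point identity $(I-A_{cl}^\top)P^e\beta^e=Q\theta$ (your ``correction terms'' in fact vanish). The paper additionally takes a limit route (Part~i), deriving $\beta^e,P^e,\lambda^e$ as limits of the finite-horizon recursions in Theorem~\ref{thm: DP solution to LQT}) to simultaneously establish that $\lambda^e$ is the optimal average cost and that the associated controller is optimal, but for Lemma~\ref{lem: he formula} as stated your direct verification suffices.
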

The proof is in Appendix \ref{append: LQT proof}.

For simplicity of notation,  let $P^e(Q, R)$ denote the solution to the DARE under the parameters $Q, R, A, B$ and $F(Q, R)$ denote the matrix in $\beta^e=F\theta$ given parameters $Q, R, A, B$. Here we omit $A, B$ in the arguments of the functions because they will not change in this paper.

In addition, we introduce the following useful notations: $\underline Q=\mu_f I_n, \bar Q= l_f I_n, \underline R= \mu_g I_m, \bar R=l_g I_m$ for $\mu_f, \mu_g >0$, $0<l_f, l_g<+\infty$; and $\bar P=P^e(\bar Q, \bar R)$ and $\underline P=P^e(\underline Q, \underline R)$. Based on the notations above, we define some sets of matrices to be used later:
	\begin{align*}
	&\Q=\{Q\mid  \underline Q \leq Q \leq \bar Q\},\\
	&\Rc= \{R \mid  \underline R \leq R \leq \bar R \},\\
	&\Pc=\{P \mid  \underline P \leq P \leq \bar P\}.
	\end{align*}

Now, we are ready for the regret bound for the general LQT problem.
\begin{corollary}[Bound on general LQT]\label{cor: LQT regret upper bdd}
	Consider the LQT problem in Example \ref{example: LQT}. Suppose for $t=0, 1, \dots, N-1$, the cost matrices satisfy $Q_t \in \Q$, $R_t \in \Rc$. Suppose the terminal cost function satisfies $Q_N\in \Pc$.\footnote{This additional condition is for technical simplicity and can be removed.}
	Then, the regret of RHTM with initialization FOSS can be bounded by
	\begin{align*}
& \textup{Regret}(RHTM) = O\left( \zeta^2(\frac{\sqrt{\zeta}-1}{\sqrt \zeta})^{2K}\left(\sum_{t=1}^N(\| P^e_t-P^e_{t-1}\| +  \|   \beta_t^e-\beta^e_{t-1}\|) + \sum_{t=0}^{N}\| x_{t-1}^e- x_t^e \|  \right)\right)
\end{align*}
	where  $K= \floor{(W-1)/p}$, $x_{-1}^e=x_0$, $x_N^e=\theta_N$,   $\zeta$ is the condition number of the corresponding $C(\mbf z)$, $(x_t^e, u_t^e)$ is the  optimal steady state under cost $Q_t, R_t, \theta_t$, $P^e_t=P^e(Q_t, R_t)$ and $\beta_t^e= F(Q_t, R_t)\theta_t$ for $t=0, \dots, N-1$ and $\beta_N^e=\theta_N$, $P_N^e = Q_N$.
\end{corollary}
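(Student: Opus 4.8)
The plan is to reduce the statement to Theorem~\ref{thm: initialization} and then control, in the LQT setting, the only term there that is not already present in the target bound. Theorem~\ref{thm: initialization} gives
$\textup{Regret}(\text{RHTM}) = O\!\big(\zeta^2(\tfrac{\sqrt\zeta-1}{\sqrt\zeta})^{2K}\sum_{t=0}^N(\|x^e_{t-1}-x^e_t\| + h^e_{t-1}(x^*_t) - h^e_t(x^*_t))\big)$, valid for any choice of bias functions $h^e_t$; I take the explicit quadratic solutions $h^e_t(x)=\tfrac12(x-\beta^e_t)^\top P^e_t(x-\beta^e_t)$ furnished by Lemma~\ref{lem: he formula}, with the conventions $P^e_N=Q_N,\ \beta^e_N=\theta_N$ (so that $h^e_N=f_N$ fits the same template) and $h^e_{-1}\equiv 0$. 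Since $\sum_t\|x^e_{t-1}-x^e_t\|$ already appears in the claimed bound, the whole task is to show $\sum_{t=1}^N\big(h^e_{t-1}(x^*_t)-h^e_t(x^*_t)\big)=O\!\big(\sum_{t=1}^N(\|P^e_{t-1}-P^e_t\|+\|\beta^e_{t-1}-\beta^e_t\|)\big)$ with a constant independent of $N$ and $W$. The $t=0$ term $h^e_{-1}(x^*_0)-h^e_0(x^*_0)=-h^e_0(x_0)\le 0$ is harmless because $P^e_0\ge 0$, and the $t=N$ term is covered by the same estimate via the conventions above.

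First I would collect the uniform ($N,W$-free) bounds that make the constants honest. By Riccati monotonicity, $\underline Q\le Q_t\le\bar Q$ and $\underline R\le R_t\le\bar R$ force $\underline P\le P^e_t\le\bar P$, so $\|P^e_t\|\le\|\bar P\|$ for $0\le t\le N-1$, and $\|P^e_N\|=\|Q_N\|\le\|\bar P\|$ because $Q_N\in\Pc$. The matrix $F(Q,R)$ in Lemma~\ref{lem: he formula} is built from $P^e(Q,R)$ and the inverse of $R+B^\top P^e(Q,R)B$, hence is continuous in $(Q,R)$ on the compact set $\Q\times\Rc$, so $\bar F\coloneqq\sup_{\Q\times\Rc}\|F(Q,R)\|<\infty$ and $\|\beta^e_t\|=\|F(Q_t,R_t)\theta_t\|\le\bar F\bar\theta\eqqcolon\bar\beta$ for $t\le N-1$, while $\|\beta^e_N\|=\|\theta_N\|\le\bar\theta$. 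The remaining and hardest bound is a uniform $\|x^*_t\|\le\bar x^*$ with $\bar x^*$ independent of $N,W$: I expect to obtain this from the Riccati recursion of the time-varying LQT, writing the optimal control as $u^*_t=-K_tx^*_t+k_t$, using that $(A,B)$ is controllable and $Q_t\in\Q$, $R_t\in\Rc$ keep the finite-horizon Riccati matrices in a fixed compact set so that the closed loop $A-BK_t$ is uniformly exponentially stable and $k_t$ is bounded in terms of $\bar\theta$ and $\|\bar P\|$, followed by a discrete geometric-sum argument (note $\|x^e_t\|$ is already uniformly bounded in Lemma~\ref{lem: bdd xte, ute, xt(0), ut(0)}, but the full optimal trajectory is not a steady state, so that lemma does not directly apply).

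With these bounds in hand the bias-difference estimate is short algebra. For $t=1,\dots,N$ set $a=x^*_t-\beta^e_{t-1}$ and $b=x^*_t-\beta^e_t$, so $a-b=\beta^e_t-\beta^e_{t-1}$ and $\|a\|,\|b\|\le\bar x^*+\bar\beta\eqqcolon c$. Using symmetry of $P^e_{t-1}$,
$h^e_{t-1}(x^*_t)-h^e_t(x^*_t)=\tfrac12(a-b)^\top P^e_{t-1}(a+b)+\tfrac12 b^\top(P^e_{t-1}-P^e_t)b$,
which is at most $c\|\bar P\|\,\|\beta^e_t-\beta^e_{t-1}\|+\tfrac12 c^2\,\|P^e_{t-1}-P^e_t\|$. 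Summing over $t$ and substituting into Theorem~\ref{thm: initialization} yields the stated bound, with all hidden constants depending only on the problem parameters and not on $N$ or $W$.

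The main obstacle is the uniform bound $\|x^*_t\|\le\bar x^*$: everything else (Riccati monotonicity to place $P^e_t$ in $\Pc$, compactness to bound $F$, and the quadratic algebra for the bias-function differences) is routine, whereas controlling the entire optimal trajectory requires the exponential-stability/turnpike behavior of the time-varying Riccati recursion under the assumed uniform definiteness bounds and the terminal condition $Q_N\in\Pc$.
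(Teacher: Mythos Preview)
Your proposal is correct and follows essentially the same route as the paper: invoke Theorem~\ref{thm: initialization}, instantiate $h^e_t$ by the quadratic bias functions from Lemma~\ref{lem: he formula}, drop the $t=0$ term via $-h^e_0(x_0)\le 0$, and reduce the bias-difference telescope to $\|P^e_{t-1}-P^e_t\|$ and $\|\beta^e_{t-1}-\beta^e_t\|$ by exactly the add-and-subtract you wrote (the paper calls the two pieces Part~1 and Part~2). The uniform bounds you list are the paper's Lemmas~\ref{lem: bound xt*}, \ref{lem: uniform bdd on beta e}, \ref{lem: uniform bdd on  Pe}, and your sketch for $\|x^*_t\|\le\bar x^*$ via uniform exponential stability of $A-BK_t$ and a geometric sum is precisely how Lemma~\ref{lem: bound xt*} is established. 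The only stylistic difference is that you bound $\|\beta^e_t\|$ by a compactness/continuity argument on $F(Q,R)$, whereas the paper bounds the companion quantity $\alpha_t=P^e_t\beta^e_t$ directly from its recursion using the same exponential stability; both are fine.
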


\begin{proof}
Before the proof, we  introduce some supportive lemmas on the uniform bounds of $P^e_t, \beta_t^e, x_t^*$ respectively. The intuition behind these uniform bounds is that the cost function coefficients $Q_t, R_t, \theta_t$ are all uniformly bounded by Assumption \ref{ass: general Qt, Rt factors} and \ref{ass: theta xi bdd}.
The proofs are technical and deferred to Appendix \ref{append: LQT proof}.
	
		\begin{lemma}[Upper bound on $x_t^*$]\label{lem: bound xt*}
	For  any $Q_t \in \Q, R_t \in \Rc, Q_N \in \Pc$, there exists $\bar x$ that does not depend on $t$, $N, W$, such that 
	$$\|x_t^*\|_2 \leq  \bar x, \qquad \forall \ 0 \leq t \leq N. $$
\end{lemma}

\begin{lemma}[Upper bound on $\beta^e$]\label{lem: uniform bdd on beta e}
  For any  $Q\in \Q, R\in \Rc$, any $\|\theta\|\leq \bar \theta$, there exists $\bar \beta\geq 0$ that does not depend on $N$ and only depends on $A, B, l_f, \mu_f, l_g, \mu_g, \bar \theta$, such that $\max(\bar \theta,\|\beta^e \|)\leq \bar \beta$, where $\beta^e$ is defined in Lemma \ref{lem: he formula}.
  \end{lemma}
  \begin{lemma}[Upper bound on  $P^e$]\label{lem: uniform bdd on  Pe}
  For any  $Q\in \Q, R\in \Rc$, we have $P^e=P^e(Q, R) \in \Pc$. Consequently, $\| P^e\| \leq \upsilon_{\max}(\bar P)$, where $\upsilon_{\max}(\bar P)$ denotes the largest eigenvalue of $\bar P$.
\end{lemma}

Now, we are ready for the proof of Corollary \ref{cor: LQT regret upper bdd}. 

By Theorem \ref{thm: initialization}, we only need to bound $\sum_{t=0}^{N}(h^e_{t-1}(x_{t}^*)-h^e_{t}(x_{t}^*))$. 
By definition, $P_N^e=Q_N, \beta_N^e=\theta_N$, $h_N^e(x)=f_N(x)$, so we can write $h_t^e(x)= \frac{1}{2}(x-\beta_t^e)^\top  P_t^e(x-\beta_t^e)$ for $0 \leq t \leq N$.

For $0\leq t \leq N-1$, we split $h_t^e(x_{t+1}^*)-h_{t+1}^e(x_{t+1}^*)$ into two parts.
\begin{align*}
h_t^e(x_{t+1}^*)-h_{t+1}^e(x_{t+1}^*)&=\frac{1}{2}(x_{t+1}^*-\beta_t^e)^\top P_t^e(x_{t+1}^*-\beta_t^e)-\frac{1}{2}(x_{t+1}^*-\beta_{t+1}^e)^\top P_{t+1}^e(x_{t+1}^*-\beta_{t+1}^e)\\
  &= \underbrace{\frac{1}{2}(x_{t+1}^*-\beta_t^e)^\top P_t^e(x_{t+1}^*-\beta_t^e)-\frac{1}{2}(x_{t+1}^*-\beta_{t+1}^e)^\top P_{t}^e(x_{t+1}^*-\beta_{t+1}^e)}_{\text{Part 1}}\\
& + \underbrace{\frac{1}{2}(x_{t+1}^*-\beta_{t+1}^e)^\top P_{t}^e(x_{t+1}^*-\beta_{t+1}^e)-\frac{1}{2}(x_{t+1}^*-\beta_{t+1}^e)^\top P_{t+1}^e(x_{t+1}^*-\beta_{t+1}^e)}_{\text{Part 2}} 
\end{align*}
Part 1 can be bounded by the following
\begin{align*}
\text{Part 1} & = \frac{1}{2}(x_{t+1}^*-\beta_t^e+  x_{t+1}^*-\beta_{t+1}^e )^\top P_t^e(x_{t+1}^*-\beta_t^e- (x_{t+1}^*-\beta_{t+1}^e))\\
& \leq \frac{1}{2}\| x_{t+1}^*-\beta_t^e+  x_{t+1}^*-\beta_{t+1}^e \|_2\|P_t^e\|_2 \| \beta_{t+1}^e-\beta_t^e\|_2 \\
& \leq (\bar x + \bar \beta)\upsilon_{max}(\bar P) \| \beta_{t+1}^e-\beta_t^e\|_2 
\end{align*}
where the last inequality is by Lemma \ref{lem: bound xt*}, \ref{lem: uniform bdd on beta e} \ref{lem: uniform bdd on  Pe}.

Part 2 can be bounded by the following when $0\leq t \leq N-1$,
\begin{align*}
\text{Part 2} & = \frac{1}{2}(x_{t+1}^*-\beta_{t+1}^e)^\top (P_{t}^e-P_{t+1}^e)(x_{t+1}^*-\beta_{t+1}^e)\\
& \leq \frac{1}{2}\|x_{t+1}^*-\beta_{t+1}^e\|_2^2 \|P_t^e-P_{t+1}^e\|_2 \leq \frac{1}{2}(\bar x+\bar \beta )^2  \|P_t^e-P_{t+1}^e\|_2
\end{align*}

Therefore, we have
\begin{align}
\sum_{t=0}^{N}(h^e_{t-1}(x_{t}^*)-h^e_{t}(x_{t}^*))& \leq \sum_{t=0}^{N-1}(h_t^e(x_{t+1}^*)-h_{t+1}^e(x_{t+1}^*)) \notag\\
&=O( \sum_{t=0}^{N-1}( \| \beta_{t+1}^e-\beta_t^e\|_2+   \|P_t^e-P_{t+1}^e\|_2)) \label{equ: bdd hk(x)-hk+1(x)}
\end{align}
where the first inequality is by  $h_0^e(x) \geq 0$ and $h_{-1}^e(x)=0$. 
Thus, by  Theorem \ref{thm: initialization}, we have \small{
\begin{align*}
& \text{Regret}(RHTM) = O\left( \zeta^2(\frac{\sqrt{\zeta}-1}{\sqrt \zeta})^{2K}\left(\sum_{t=1}^N\left(\| P^e_t-P^e_{t-1}\| +  \|   \beta_t^e-\beta^e_{t-1}\|\right) + \sum_{t=0}^{N}\| x_{t-1}^e- x_t^e \|  \right)\right)
\end{align*}}
\end{proof}

\subsection{Proof of Corollary \ref{cor: LQT Q, R not change}}

Roughly speaking, the proof is mostly by applying Corollary \ref{cor: LQT regret upper bdd} and by showing $\|\beta_t^e-\beta_{t-1}^e\|$ and $\|x_t^e-x_{t-1}^e\|$ can be bounded by $\|\theta_t-\theta_{t-1}\|$ up to some constants and $\|P_t^e-P^e_{t-1}\|=0$ in the LQT problem \eqref{equ: LQT Q R not changing} where $Q$ and $R$ are not changing. However, directly applying  the results in Theorem \ref{thm: initialization} and Corollary \ref{cor: LQT regret upper bdd} will result in some extra constant terms because some inequalities used to derive  the bounds in Theorem \ref{thm: initialization} and Corollary \ref{cor: LQT regret upper bdd} are not necessary when $Q, R$ are not changing. Therefore, we will need some intermediate results in the proofs of Theorem \ref{thm: initialization} and Corollary \ref{cor: LQT regret upper bdd}  to prove Corollary \ref{cor: LQT Q, R not change}.

Firstly, by  Lemma \ref{lem: J(varphi)-sum lambda e} and Lemma \ref{lem: sum lambda e -J*}, we have
\begin{align*}
    J(FOSS)- J^* & = J(FOSS)-\sum_{t=0}^{N-1} \lambda_t^e + \sum_{t=0}^{N-1} \lambda_t^e-J^*\\
    & \leq  \underbrace{c_1 \sum_{t=0}^{N-1}\| x_{t-1}^e- x_t^e \|}_\text{Part I}+ \underbrace{\sum_{t=0}^{N-1}(h^e_t(x_{t+1}^*)-h^e_{t+1}(x_{t+1}^*))}_\text{Part II} +\underbrace{f_N(x_N(0)) -h^e_0(x_0)}_\text{Part III}
\end{align*}
We are going to bound each part by $\sum_t \|\theta_t -\theta_{t-1}\|$ in the following.

\nbf{Part I:} We will bound Part I  by $\sum_t \|\theta_t-\theta_{t-1}\|$ through showing that $x_t^e=F_1F_2\theta_t$ for some matrices $F_1, F_2$. The representation of $x_t^e$ relies on Lemma \ref{lem: x e x(0) form}.

By Lemma \ref{lem: x e x(0) form}, any steady state $(x,u)$ can be represented as a matrix multiplied by $z$: 
\begin{align*}
x&=(\underbrace{z^1, \dots, z^1}_{p_1}, \underbrace{z^2, \dots, z^2}_{p_2}, \dots, \underbrace{z^m, \dots, z^m}_{p_m})^\top \eqqcolon F_1z \\
u& = (z^1, \dots, z^m)^\top -A(\I, :) x  = (I_m-A(\I, :) F_1) z
\end{align*} 
where $F_1\in \R^{n,m}$ is a binary matrix with full column rank. 

Consider cost function $\frac{1}{2}(x-\theta)^\top Q(x-\theta)+ \frac{1}{2}u^\top  R u$. By the steady-state representation above, the optimal steady state can be solved by the following unconstrained optimization:
$$\min_z (F_1z-\theta)^\top Q (F_1z-\theta)+ z^\top  (I-A(\I, :) F_1)^\top R(I-A(\I, :) F_1) z$$
Since $F_1$ is full column rank, the function is strongly convex and has the unique solution 
\begin{equation}\label{equ: ze = F2 theta}
    z^e=F_2\theta
\end{equation}  
where $F_2 = (F_1^\top  Q F_1+ (I-A(\I, :) F_1)^\top R(I-A(\I, :) F_1))^{-1}F_1^\top Q$. Accordingly, the optimal steady state can be represented as  
\begin{equation}
    x^e= F_1 F_2\theta, \qquad u^e= (I_m-A(\I, :) F_1) F_2 \theta. \label{equ: xe, ue=Ftheta}
\end{equation} Consequently, when $1 \leq t \leq N-1$,$
\| x_t^e-x_{t-1}^e\| \leq \|F_1 F_2\| \| \theta_t -\theta_{t-1}\| $.
When $t=0$, $\|x_0^e -x_{-1}^e\| \leq\| F_1 F_2\| \| \theta_0 -\theta_{-1}\| $ holds since $x_{-1}^e= x_0=\theta_{-1}=0$.
Combining the upper bounds above, we have
\begin{align*}
    \text{Part I} =O\left( \sum_{t=0}^{N-1}\| \theta_t -\theta_{t-1}\| \right)
\end{align*}

\nbf{Part II:} 
By \eqref{equ: bdd hk(x)-hk+1(x)} in the proof of Corollary \ref{cor: LQT regret upper bdd}, and by noticing that $P_t^e=P^e(Q, R)$ does not change, we have
\begin{align*}
 \sum_{t=0}^{N-1}(h_t^e(x_{t+1}^*)-h_{t+1}^e(x_{t+1}^*))& = O\left(\sum_{t=0}^{N-1} \| \beta_{t+1}^e-\beta_t^e\|\right) 
\end{align*}
By Lemma \ref{lem: he formula}, $\beta_t^e = F(Q, R) \theta_t$  for $0\leq t \leq N-1$. In addition, since $\beta_N^e=\theta_N=0$ as defined in \eqref{equ: LQT Q R not changing} and Corollary \ref{cor: LQT regret upper bdd}, we can also write  $\beta_N^e=F(Q, R) \theta_N$.
Thus, 
\begin{equation*}
    \text{Part II}=O\left(\sum_{t=0}^{N-1}  \| \beta_{t+1}^e-\beta_t^e\|\right) =O\left(  \sum_{t=1}^{N} \|\theta_{t} -\theta_{t-1}\|\right)
\end{equation*}

\nbf{Part III:} 
By our condition for the terminal cost function, we have $f_N(x_N(0))= \frac{1}{2}(x_N(0)-\beta_N^e)^\top  P^e (x_N(0)-\beta_N^e)$. By Lemma \ref{lem: he formula}, we have $h_0^e(x_0)= \frac{1}{2}(x_0-\beta_0^e)^\top  P^e (x_0-\beta_0^e)$. So Part III can be bounded by 
\begin{align*}
    \text{Part III}& = \frac{1}{2}(x_N(0)-\beta_N^e)^\top  P^e (x_N(0)-\beta_N^e)-\frac{1}{2}(x_0-\beta_0^e)^\top  P^e (x_0-\beta_0^e)\\
    & = \frac{1}{2}(x_N(0)-\beta_N^e+ x_0-\beta_0^e)^\top  P^e (x_N(0)-\beta_N^e-(x_0-\beta_0^e)) \\
    & \leq  \frac{1}{2} \| x_N(0)-\beta_N^e+ x_0-\beta_0^e\| \| P^e\| \| x_N(0)-\beta_N^e-(x_0-\beta_0^e)\|\\
    &\leq \frac{1}{2}( \sqrt n \bar x^e+ \bar \beta  + \bar \beta)\| P^e\| (\| x_N(0)- x_0\| + \| \beta_N^e -\beta_0^e\|) 
\end{align*}
where the last inequality is by $x_0=0$, Lemma \ref{lem: bdd xte, ute, xt(0), ut(0)}, Lemma \ref{lem: uniform bdd on beta e}.

Next we will bound $\| x_N(0)- x_0\|$ and  $\| \beta_N^e -\beta_0^e\|$ respectively. Firstly,  by $\beta_t^e=F(Q,R)\theta_t$ in Lemma \ref{lem: he formula}, we have
\begin{align*}
    \| \beta_N^e -\beta_0^e\|& \leq \sum_{t=0}^{N-1} \| \beta_{t+1}^e-\beta_t^e\|\leq \| F(Q,R)\|\sum_{t=0}^{N-1} \| \theta_{t+1}-\theta_t\|
\end{align*}
Secondly, we will bound $\| x_N(0)- x_0\| $. 
\begin{align*}
\| x_N(0)- x_0\|  &\leq \| x_N(0)- x_{N-1}^e\| + \| x_{N-1}^e- x_0\|\\
& \leq \| x_N(0)- x_{N-1}^e\| + \sum_{t=0}^{N-1}\|x_t^e - x_{t-1}^e\|\\
&\leq \| x_N(0)- x_{N-1}^e\| +\| F_1 F_2\| \sum_{t=0}^{N-1}\|\theta_t - \theta_{t-1}\| 
\end{align*}
where the second inequality is by $x_0^e=x_0$, the third inequality is by \eqref{equ: xe, ue=Ftheta}.

Next, we will focus on $\| x_N(0)- x_{N-1}^e\|$. By Lemma \ref{lem: x e x(0) form}, 
\begin{align*}
    x_N(0)&= (z_{N-p_1}^{e,1}, \dots, z_{N-1}^{e,1}, z_{N-p_2}^{e,2}, \dots, z_{N-1}^{e,2}, \dots, z_{N-p_m}^{e,m}, \dots, z_{N-1}^{e,m})^\top \\
    x_{N-1}^e&= (z_{N-1}^{e,1}, \dots, z_{N-1}^{e,1}, z_{N-1}^{e,2}, \dots, z_{N-1}^{e,2}, \dots, z_{N-1}^{e,m}, \dots, z_{N-1}^{e,m})^\top
\end{align*}
As a result,
\begin{align*}
    \| x_N(0)-x_{N-1}^e\|^2& \leq \| z_{N-2}^e -z_{N-1}^e\|^2 + \dots + \| z_{N-p}^e - z_{N-1}^e\|^2\\
    & =\|F_2\|^2(\|\theta_{N-2}-\theta_{N-1}\|^2 + \dots + \|\theta_{N-p}-\theta_{N-1}\|^2)
\end{align*}
where the equality is by \eqref{equ: ze = F2 theta}.
Taking square root on both sides yields
\begin{align*}
    \| x_N(0)-x_{N-1}^e\|& \leq \|F_1\|\sqrt{\|\theta_{N-2}-\theta_{N-1}\|^2 + \dots + \|\theta_{N-p}-\theta_{N-1}\|^2}\\
    & \leq \|F_2\| (\|\theta_{N-2}-\theta_{N-1}\| + \dots + \|\theta_{N-p}-\theta_{N-1}\|)\\
    & \leq \|F_2\| (p-1) \sum_{t=N-p}^{N-2}\|\theta_{t+1}-\theta_t\|
\end{align*}
Combining the bounds above leads to
\begin{equation*}
    \text{Part III}=O\left(\sum_{t=0}^{N-1} \|\theta_{t+1} -\theta_{t}\|\right)
\end{equation*}
The proof is completed by summing up the bounds for Part I, II, III.

\section{Proof of Theorem \ref{thm: lower bdd}}\label{append: lower bound}

\nit{Proof intuition:}
By the problem transformation in Section \ref{sec: problem transform}, the fundamental limit of the online control problem is equivalent to the fundamental limit of the online convex optimization problem with objective $C(\mbf z)$. Therefore, we will focus on $C(\mbf z)$. Since the lower bound is for the worst case scenario, we only need to construct some tracking trajectories $\{\theta_t\}$ for Theorem \ref{thm: lower bdd} to hold.   However, it is generally difficult to construct the tracking trajectories, so we consider randomly generated $\theta_t$ and show that the regret in expectation can be lower bounded. Then, there must exist some realization of the randomly generated $\{\theta_t\}$ such that the regret lower bound holds. 


\nit{Formal proof:}

\nbf{Step 1: construct LQ tracking.} For simplicity, we construct a single-input system with $n=p$ and  $A \in \R^{n,n}$ and $B\in \R^{n\times 1}$ as follows: \footnote{It is easy to generalize the construction to multi-input case by constructing $m$ decoupled subsystems. }
\begin{align*}
A= \left(\begin{array}{ccccc}
0 & 1 & \cdots   &0  \\
\vdots & \ddots & \ddots  &\\
& &  0 & 1\\
1 &0 & \cdots  & 0
\end{array}\right), \quad B =\left(
\begin{array}{c}
0 \\
\vdots \\
0 \\
1
\end{array}
\right)
\end{align*}
$(A, B)$ is controllable because $(B, AB, \dots, A^{p-1}B)$ is full rank. $A$'s controllability index is $p=n$. 

Next, we construct $Q$ and $R $. For any $\zeta>1$ and $p$, define $\delta = \frac{4}{(\zeta-1)p}$. Let $Q= \delta I_n$  and $R= 1$ for $0 \leq t \leq N-1$. Let $P^e=P^e(Q, R)$ be the solution to the DARE. The next lemma shows that $P^e$ is a diagonal matrix and its diagonal entries can be characterized.
\begin{lemma}[Form of $P^e$] \label{lem: Pe's form}
    Let $P^e$ denote the solution to the DARE determined by $A, B, Q, R$ defined above. Then $P^e$ satisfies the form
    \begin{align*}
P^e= \left(\begin{array}{ccccc}
q_1 &0 & \cdots &0 \\
0& q_2 & \cdots &0 \\
& & \ddots  &\\
0&&\cdots & q_n
\end{array}\right),
\end{align*}
where $q_i = q_1+(i-1)\delta$ for $1 \leq i \leq n$ and $\delta < q_1 < \delta +1$.
\end{lemma}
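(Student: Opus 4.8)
The plan is to exploit the very special structure of $A$, $B$, $Q=\delta I_n$, $R=1$ to collapse the matrix DARE \eqref{equ: DARE} into a scalar fixed-point equation. First I would write \eqref{equ: DARE} entrywise. Since $B$ is the $n$-th standard basis vector, $B^\top P^e B = P^e_{nn}$ and $B^\top P^e$ is the last row of $P^e$, so the Riccati correction term $P^e B(B^\top P^e B+R)^{-1}B^\top P^e$ is the rank-one matrix $\frac{1}{P^e_{nn}+1}\,p\,p^\top$, where $p$ is the last column of $P^e$ (equal, by symmetry, to the transpose of its last row). The crucial observation is that conjugation by the cyclic shift $A$ only permutes indices: for any matrix $M$ one has $(A^\top M A)_{ij}=M_{\sigma(i),\sigma(j)}$, where $\sigma$ is the cyclic permutation with $\sigma(i)=i-1$ for $2\le i\le n$ and $\sigma(1)=n$. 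Hence the DARE becomes $P^e=\delta I_n+\widetilde M$ with $\widetilde M_{ij}=M_{\sigma(i),\sigma(j)}$ and $M=P^e-\frac{1}{P^e_{nn}+1}pp^\top$.

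Next I would make the ansatz that $P^e$ is diagonal, $P^e=\mathrm{diag}(q_1,\dots,q_n)$ with each $q_i>0$, and check self-consistency. Under this ansatz $p=q_n B$, so $M=\mathrm{diag}\big(q_1,\dots,q_{n-1},\,q_n-\tfrac{q_n^2}{q_n+1}\big)=\mathrm{diag}\big(q_1,\dots,q_{n-1},\tfrac{q_n}{q_n+1}\big)$ is diagonal, and relabelling its diagonal by $\sigma$ keeps it diagonal; matching diagonal entries of $P^e=\delta I_n+\widetilde M$ gives $q_i=\delta+q_{i-1}$ for $2\le i\le n$ (since $\sigma(i)=i-1\le n-1$) and $q_1=\delta+\tfrac{q_n}{q_n+1}$ (since $\sigma(1)=n$, picking out the one ``corrected'' entry). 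Solving the recursion yields $q_i=q_1+(i-1)\delta$; writing $s:=q_n=q_1+(n-1)\delta$ and substituting into the last equation gives $s=n\delta+\tfrac{s}{s+1}$, i.e. $s^2=n\delta(s+1)$, whose unique positive root is $s=\tfrac12\big(n\delta+\sqrt{n^2\delta^2+4n\delta}\big)$. Then $q_1=\delta+\tfrac{s}{s+1}$, and since $s>0$ we get $0<\tfrac{s}{s+1}<1$, hence $\delta<q_1<\delta+1$; in particular every $q_i=q_1+(i-1)\delta$ is strictly positive, so the constructed diagonal matrix is positive definite.

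Finally I would close the argument by uniqueness: the DARE associated with $(A,B,Q,R)$ has a unique positive semidefinite solution because $(A,B)$ is controllable (as noted in the construction) and $(A,Q^{1/2})$ is observable, the latter being immediate since $Q=\delta I_n>0$. As the diagonal matrix built above is positive semidefinite (indeed positive definite) and satisfies \eqref{equ: DARE}, it is exactly $P^e$, giving the claimed form with $q_i=q_1+(i-1)\delta$ and $\delta<q_1<\delta+1$. (Alternatively, diagonality of $P^e$ can be obtained without invoking uniqueness: the finite-horizon Riccati recursion started from $0$ stays diagonal and positive semidefinite — the Riccati map preserves this set by the same index-shift computation, using $B=e_n$ — and converges monotonically to $P^e$.)

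The step I expect to be the main obstacle is the index bookkeeping in the conjugation, namely verifying $(A^\top M A)_{ij}=M_{\sigma(i),\sigma(j)}$ and, in particular, tracking that $\sigma$ maps the distinguished index $n$ — where the rank-one correction lives — to index $1$. This is precisely what produces the ``wrap-around'' term $\tfrac{q_n}{q_n+1}$ in the equation for $q_1$ (rather than a plain $q_{n-1}$), and hence both the arithmetic-progression structure of the $q_i$ and the bound $q_1\in(\delta,\delta+1)$. Once this is in hand, the remaining computation is routine scalar algebra.
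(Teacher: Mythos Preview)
Your proposal is correct and follows essentially the same approach as the paper: make a diagonal ansatz for $P^e$, substitute into the DARE to reduce it to the scalar recursion $q_i=\delta+q_{i-1}$ (for $i\ge2$) together with the wrap-around equation $q_1=\delta+\tfrac{q_n}{q_n+1}$, solve the resulting quadratic for $q_n$, and then invoke uniqueness of the positive-definite DARE solution. Your write-up is more explicit about the index permutation induced by $A$ and about why uniqueness holds (controllability plus observability from $Q=\delta I_n>0$), and you add a pleasant alternative via the finite-horizon Riccati recursion preserving diagonality, but these are elaborations rather than a different route.
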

\begin{proof}[Proof of Lemma \ref{lem: Pe's form}]
The DARE exists a unique positive definite solution \cite{bertsekas3rddynamic}.
Suppose the solution is diagonal and substitute it in the DARE as follows.
 \begin{align*}
 \qquad\qquad\quad  P^e
&= Q+A^\top (P^e-P^eB(B^\top P^eB+R)^{-1}B^\top P^e)A \\
      \left(\begin{array}{ccccc}
q_1 &0 & \cdots &0 \\
0& q_2 & \cdots &0 \\
& & \ddots  &\\
0&&\cdots & q_n
\end{array}\right)
       &= \left(\begin{array}{ccccc}
q_n/(1+q_n)+\delta &0 & \cdots &0 \\
0& q_1+\delta & \cdots &0 \\
& & \ddots  &\\
0&&\cdots & q_{n-1}+\delta
\end{array}\right)
    \end{align*}
    So we have $q_i = q_{i-1}+\delta$ for $1 \leq i \leq n-1$, and $q_n/(1+q_n)+\delta=q_1= q_n-(n-1)\delta$. Thus, $q_n = \frac{n\delta + \sqrt{n^2\delta^2 + 4n \delta}}{2} >n\delta$. It is straightforward that $q_1= q_n-(n-1)\delta >\delta >0$, and $q_1< \delta +1$ by  $q_n/(1+q_n)<1$. So we have found the unique positive definite solution to the DARE.
\end{proof}

Next, we will construct $\theta_t$. Let $\theta_0=\theta_N=\beta_N^e=0$ for simplicity. For $\theta_t$ when $1\leq t \leq N-1$, we divide the $N-1$ stages into $E$ epochs, each with length $\Delta = \ceil{\frac{N-1}{\floor{\frac{L_N}{2\bar\theta}}}}$, possibly except  the last epoch. This is possible because $1\leq \Delta \leq N-1$ by the conditions in Theorem \ref{thm: lower bdd}. Thus, $E= \ceil{\frac{N-1}{\Delta}}$. 
Let $\J$ be the first stage of the each epoch: $\J= \{1, \Delta+1, \dots, (E-1)\Delta +1 \}$. Let $\theta_t$ for $t\in \J$ independently and identically follow the  distribution below. \begin{align*}
\Pr(\theta_t^i=a) &=\begin{cases}
1/2 & \text{if } a=\sigma\\
1/2 & \text{if } a=-\sigma
\end{cases}, \quad \text{i.i.d. for all $i \in[n]$, $t\in \J$,}
\end{align*}
where $\sigma= \frac{\bar \theta}{\sqrt{n}}$. It can be easily verified that $\|\theta\|= \bar \theta$ for any realization of this distribution, so Assumption \ref{ass: theta xi bdd} is satisfied. Let the other  $\theta_t$ in each epoch be equal to the $\theta$ at the start of their corresponding epochs, i.e. $ \theta_{k\Delta +1}=\theta_{k\Delta +2}=\cdots =\theta_{(k+1)\Delta }$, when $k \leq E-1$, and $\theta_{k\Delta +1}=\dots =\theta_{N-1}$ when $k=E$. The following inequalities show that the constructed $\{\theta_t\}$ satisfies the variation budget:
\begin{align*}
     \sum_{t=0}^N \| \theta_t -\theta_{t-1}\|&=\|\theta_1-\theta_0\| + \sum_{k=1}^{E-1}\|\theta_{k\Delta+1}-\theta_{k\Delta}\| + \| \theta_{N-1}-\theta_N\|\\
     & \leq \bar \theta + 2(E-1)\bar \theta + \bar \theta=2\bar \theta E\\
     & \leq 2\bar \theta\floor{\frac{L_N}{2\bar\theta}} \leq 2\bar \theta\frac{L_N}{2\bar\theta}= L_N
\end{align*}
where the first equality is 
by $\theta_0=\theta_{-1}=\theta_N=0$, the first inequality is by $\|\theta_t \|=\bar \theta$ when $1 \leq t \leq N-1$, the second inequality is by $\Delta = \ceil{\frac{N-1}{\floor{\frac{L_N}{2\bar\theta}}}}\geq \frac{N-1}{\floor{\frac{L_N}{2\bar\theta}}}$,   and thus $ \floor{\frac{L_N}{2\bar\theta}} \geq \ceil{\frac{N-1}{\Delta} }=E$.

The  total cost of our constructed LQ tracking problem is
$$J(\mbf x,\mbf u)= \sum_{t=0}^{N-1}(\frac{\delta}{2}\|x_t-\theta_t\|^2 + \frac{1}{2}u_t^2)+ \frac{1}{2}x_N^\top  P^e x_N$$

We will verify that $C(\mbf z)$'s condition number is $\zeta$ in Step 2.

\nbf{Step 2: problem transformation and  the optimal solution $\mbf z^*$.} By the problem transformation in Section \ref{sec: problem transform}, we let $z_t =x_t^n$, and the equivalent cost function $C(\mbf z)$ is given below.
\begin{align*}
& C(\mbf z)=\sum_{t=0}^{N-1}(\frac{\delta}{2}\sum_{i=1}^n ( z_{t-n+i}- \theta_t^i)^2 +\frac{1}{2}(z_{t+1}- z_{t-n+1})^2) +  \frac{1}{2}\sum_{i=1}^n q_iz_{N-n+i}^2
\end{align*}
and $z_t =0$ and $\theta_t=0$ for $t \leq 0$. 

Since $C(\mbf z)$ is strongly convex, $\min C(\mbf z)$ admits a unique optimal solution, denoted as $\mbf z^*$, which is determined by the first-order optimality condition: $\nabla C(\mbf z^*)=0$. In addition, our constructed $C(\mbf z)$ is a quadratic function, so there exists a matrix $H \in \R^{N\times N}$ and a vector $\eta\in \R^N$ such that $\nabla C(\mbf z^*)=H\mbf z^*-\eta=0$. By the partial gradients of $C(\mbf z)$ below,
\begin{align*}
& \frac{\partial C}{\partial z_t}= \delta ( z_t - \theta_t^n + z_t -\theta_{t+1}^{n-1} +\dots + z_t - \theta_{t+n-1}^1) + z_t - z_{t+n}+z_t - z_{t-n}, \ 1 \leq t \leq N-n\\
&  \frac{\partial C}{\partial z_t}= \delta ( z_t - \theta_t^n +\dots + z_t - \theta_{N-1}^{n+t-N+1} ) + q_{n+t-N}z_t+z_t-z_{t-n}, \qquad N-n+1 \leq t \leq N 
\end{align*}
For simplicity and without loss of generality, we assume  that $N/n$ is an integer. Then, by Lemma \ref{lem: Pe's form},
 $H$ can be represented as  the block matrix below 
$$
H = \left(
\begin{array}{cccc}
(\delta n+2)I_n& -I_n & \cdots &\\
-I_n & (\delta n+2)I_n & \ddots & \\
&  \ddots & \ddots & -I_n\\
& & -I_n & (q_n+1)I_n
\end{array}\right)\in \R^{N \times N}.
$$
 $\eta$ is a linear combination of $\theta$: for $1 \leq t \leq N$, we have 
$\eta_t = \delta(  \theta_t^n+\dots +   \theta_{t+n-1}^1) = \delta(e_n^\top   \theta_t  + \dots + e_1^\top   \theta_{t+n-1})$ 
where $e_1, \dots, e_n\in \R^n$ are standard basis vectors and $\theta_t=0$ for $t\geq N$.


By Gergoskin's Disc Theorem and Lemma \ref{lem: Pe's form}, $H$'s condition number is $(\delta n+4)/\delta n =\zeta$ by our choice of $\delta$ in Step 1 and $p=n$. Thus we have shown that $C(\mbf z)$'s condition number is $\zeta$.

Since $H$ is strictly diagonally dominant with positive diagonal entries and nonpositive off-diagonal entries, $H$ is invertible and its inverse, denoted by $Y$, is nonnegative. Consequently,  the optimal solution can be represented as $\mbf z^*=Y\eta$. Since $\eta$ is linear in $\{ \theta_t \}$, $z_t^*$ is also linear in $\{ \theta_t \}$ and can be characterized by the following.
\begin{align}
z_{t+1}^*& = \sum_{i=1}^N Y_{t+1,i} \eta_{i} \nonumber =\delta\sum_{i=1}^N Y_{t+1,i} \sum_{j=0}^{n-1} e_{n-j}^\top    \theta_{i+j}  \nonumber\\
& = \delta \sum_{k=1}^{N-1} \left( \sum_{i=1}^n Y_{t+1,i+k-n}e_i^\top \right) \theta_k\nonumber \\
& \eqqcolon \delta \sum_{k=1}^{N-1} v_{t+1,k}   \theta_k\label{equ: z represented by theta bar}
\end{align}
where  $\theta_t=0$ for $t\geq N$, $Y_{t+1,i}=0$ for $i\leq 0$, and $v_{t+1,k}\coloneqq \sum_{i=1}^n Y_{t+1,i+k-n}e_i^\top$.

In addition, we are able to show in the next lemma that $Y$ has decaying row entries starting at the diagonal entries. The proof is technical and deferred to the Appendix F.1. 
\begin{lemma}\label{lem: H and H inverse properties}
When $N/n$ is an integer, 
the inverse of $H$, denoted by $Y$, can be represented as a block matrix
$$
Y = \left(
\begin{array}{cccc}
y_{1,1} I_n& y_{1,2} I_n & \cdots &y_{1,N/n} I_n\\
y_{2,1} I_n & y_{2,2} I_n & \cdots & y_{2,N/n} I_n\\
\vdots&  \ddots & \ddots & \vdots\\
y_{N/n,1} I_n&y_{N/n,2}I_n & \cdots  & y_{N/n,N/n} I_n
\end{array}\right)
$$
where $y_{t, t+\tau} \geq \frac{1-\rho}{\delta n +2}\rho^\tau>0$ for $\tau \geq 0$ and $\rho= \frac{\sqrt \zeta -1}{\sqrt \zeta +1}$.
\end{lemma}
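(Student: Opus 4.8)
\textbf{Proof proposal for Lemma~\ref{lem: H and H inverse properties}.}
The plan is to reduce the claim about the $N\times N$ matrix $H$ to a statement about an $(N/n)\times(N/n)$ \emph{scalar} tridiagonal matrix, then prove the entrywise lower bound by a monotone comparison with a Toeplitz matrix for which the inverse has a classical closed form. For Step~1, I read off from the block form of $H$ displayed above that $H=\tilde H\otimes I_n$, where $\tilde H\in\R^{(N/n)\times(N/n)}$ is tridiagonal with super/sub\-diagonal entries $-1$ and diagonal $(\delta n+2,\dots,\delta n+2,\,q_n+1)$. Hence $Y=H^{-1}=\tilde H^{-1}\otimes I_n$, which immediately yields the claimed block structure with $y_{t,t'}=(\tilde H^{-1})_{t,t'}$; it remains to show $(\tilde H^{-1})_{t,t+\tau}\ge\frac{1-\rho}{\delta n+2}\rho^\tau$ for $\tau\ge0$, $t\ge1$, $t+\tau\le N/n$.

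For Step~2, let $m=N/n$ and let $T$ be the Toeplitz tridiagonal matrix obtained from $\tilde H$ by replacing the last diagonal entry $q_n+1$ with $\delta n+2$, so that $\tilde H=T-\epsilon e_me_m^\top$ with $\epsilon=(\delta n+1)-q_n>0$ (by Lemma~\ref{lem: Pe's form}, $q_n=q_1+(n-1)\delta<\delta n+1$). The matrix $T$ is a symmetric, irreducible, strictly diagonally dominant tridiagonal $M$-matrix, so $T^{-1}>0$ entrywise; moreover $\tilde H\succ0$ (it is the Hessian of the strongly convex $C(\mbf z)$) and $\det T>0$, so by the matrix determinant lemma $1-\epsilon e_m^\top T^{-1}e_m=\det\tilde H/\det T>0$. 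Sherman--Morrison then gives
\[
\tilde H^{-1}=T^{-1}+\frac{\epsilon}{1-\epsilon e_m^\top T^{-1}e_m}\,(T^{-1}e_m)(T^{-1}e_m)^\top\ \ge\ T^{-1}\quad\text{entrywise},
\]
so it suffices to lower bound $(T^{-1})_{t,t+\tau}$.

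For Step~3, write $\delta n+2=2\cosh\theta$ with $\theta>0$; solving $e^{\theta}+e^{-\theta}=\delta n+2$ and using $\zeta=(\delta n+4)/(\delta n)$ one checks that $e^{-\theta}=\frac{\sqrt\zeta-1}{\sqrt\zeta+1}=\rho$. By the classical formula for the inverse of a Toeplitz tridiagonal matrix, $(T^{-1})_{ij}=\dfrac{\sinh(\min(i,j)\theta)\,\sinh((m+1-\max(i,j))\theta)}{\sinh\theta\,\sinh((m+1)\theta)}$. Taking $i=t$, $j=t+\tau$ and writing $k=t\ge1$, $l=m+1-t-\tau\ge1$ (so $k+l=m+1-\tau$), I use $\sinh k\theta\,\sinh l\theta=\tfrac12(\cosh(k+l)\theta-\cosh(k-l)\theta)$, the bound $\cosh((k-l)\theta)\le\cosh((k+l-2)\theta)$ (valid since $k,l\ge1$), and $\cosh(k+l)\theta-\cosh(k+l-2)\theta=2\sinh\theta\,\sinh(k+l-1)\theta$ to obtain
\[
(T^{-1})_{t,t+\tau}\ \ge\ \frac{\sinh((m-\tau)\theta)}{\sinh((m+1)\theta)}=\frac{e^{-(\tau+1)\theta}\,(1-e^{-2(m-\tau)\theta})}{1-e^{-2(m+1)\theta}}\ \ge\ e^{-(\tau+1)\theta}(1-e^{-2\theta})=\rho^{\tau}\,\rho(1-\rho)(1+\rho),
\]
using $m-\tau\ge1$. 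Finally, since $\delta n+2=(1+\rho^2)/\rho$ and $(1+\rho)(1+\rho^2)\ge1$,
\[
\rho(1-\rho)(1+\rho)=\frac{(1-\rho)\rho\,(1+\rho)(1+\rho^2)}{1+\rho^2}\ \ge\ \frac{(1-\rho)\rho}{1+\rho^2}=\frac{1-\rho}{\delta n+2},
\]
which closes the argument.

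The structural reduction in Step~1 is essentially a bookkeeping observation about the Hessian of the quadratic $C(\mbf z)$, and Step~2 is a short lemma once one notices that positive-definiteness of $\tilde H$ (inherited from strong convexity of $C$) guarantees the Sherman--Morrison denominator is positive. I expect the main obstacle to be Step~3: carrying out the comparison that squeezes the exact $\sinh$-quotient down to the clean geometric form $\tfrac{1-\rho}{\delta n+2}\rho^\tau$, and in particular correctly identifying $\rho=e^{-\theta}$ with the spectral decay rate $\tfrac{\sqrt\zeta-1}{\sqrt\zeta+1}$; every inequality there is elementary but the choices of which terms to discard must be made so that the final constant matches.
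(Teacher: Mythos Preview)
Your proof is correct and takes a genuinely different route from the paper's.

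The paper works directly with the \emph{exact} inverse of the non-Toeplitz tridiagonal matrix $\tilde H$ (with the perturbed last diagonal entry $q_n+1$), invoking a closed-form formula from the literature on block tridiagonal inverses that writes $(\tilde H^{-1})_{ij}=a_ib_j$ with explicit sequences $a_t,b_t$ in powers of $\rho$; the lower bound then comes from bounding the factors $\rho^t a_t$, $\rho^{-N}b_N$, and $\rho^{N-t-\tau}b_{t+\tau}$ separately, including a case split on the sign of $1-(q_n+1)\rho$. Your approach instead isolates the last-entry perturbation via Sherman--Morrison, so that the single inequality $\tilde H^{-1}\ge T^{-1}$ (entrywise) reduces the problem to the pure Toeplitz case; the classical $\sinh$ formula for $T^{-1}$ then yields the bound through product-to-sum identities without any case analysis. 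Both reach the same constant $\frac{1-\rho}{\delta n+2}$. Your decomposition is cleaner in that the role of the boundary term $q_n+1$ is dispatched in one stroke (positivity of the Sherman--Morrison denominator following from $\det\tilde H,\det T>0$), whereas the paper carries the dependence on $q_n$ through the entire calculation. The paper's approach, on the other hand, gives the exact entries of $\tilde H^{-1}$ rather than a lower bound at the intermediate step, which could be useful if one wanted tighter constants. One small wording fix: in Step~2 you say ``$\tilde H\succ0$ (it is the Hessian of the strongly convex $C(\mbf z)$)''; strictly speaking $H=\tilde H\otimes I_n$ is the Hessian, but of course the Kronecker structure transfers positive definiteness to $\tilde H$.
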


\nbf{Step 3: characterize $z_{t+1}(\A^z)$.}
For any online control algorithm $\A$, we can define an equivalent online algorithm for $z$, denoted as $\A^z$. $\A^z$, at each time $t$, outputs $z_{t+1}(\A^z)$  based on the predictions and the history, i.e.,
$$z_{t+1}(\A^z) = \A^z( \{ \theta_s \}_{s=0}^{t+W-1}), \quad t\geq 0$$
For simplicity, we consider online deterministic algorithm.\footnote{The proof can be easily generalized to random algorithms} Notice that $z_{t+1}$ is a random variable because $\theta_1, \dots, \theta_{t+W-1}$ are random. Based on this observation and Lemma \ref{lem: H and H inverse properties}, we are able to provide a regret lower bound in Step 4.

\nbf{Step 4: prove the regret lower bound on $\A$.}
Roughly speaking, the regret occurs when something unexpected happens beyond the prediction window, that is,  at each $t$, the prediction window goes as far as $t+W-1$, but if $\theta_{t+W}$ changes from $\theta_{t+W-1}$, the online algorithm cannot prepare for it, resulting in poor control and positive regret. 
By our construction, when $t+W\in \J$, $\theta_{t+W}$ changes from $\theta_{t+W-1}$. To study such $t$, we  define a set  $\J_1 = \{ 0 \leq t \leq N-W-1\mid t+W \in \J\}$.
It can be shown that the cardinality of $\J_1$ can be lower bounded by $L_N$ up to some constants:
\begin{align}\label{equ: |J1| lower bdd}
    |\J_1| \geq \frac{1}{18 \bar \theta}L_N
\end{align}
The proof of \eqref{equ: |J1| lower bdd} is provided below.
\begin{align*}
    |\J_1| &= | \{ W\leq t \leq N-1\mid t\in \J\}|\\
    &=| \J|-| \{ 1\leq t \leq W-1\mid t\in \J\}|\\
    & = \ceil{\frac{N-1}{\Delta}}- \ceil{\frac{W-1}{\Delta}}\\
    &\geq \floor{\frac{N-W}{\Delta}} \\
    &\geq \frac{1}{2}\frac{N-W}{\Delta}\\
    &\geq  \frac{1}{2}\frac{N-W}{N-1 + \floor{\frac{L_N}{2\bar \theta}}}  \floor{\frac{L_N}{2\bar \theta}} \\
    &\geq \frac{1}{2} \frac{N-\frac{1}{3}N}{N-1+N+1/2}\floor{\frac{L_N}{2\bar \theta}}\geq \frac{1}{6}\floor{\frac{L_N}{2\bar \theta}}\\
    &\geq \frac{1}{6} \frac{2}{3}\frac{L_N}{2\bar \theta}= \frac{1}{18}\frac{L_N}{\bar \theta}
\end{align*}
where the first inequality is by the definition of the ceiling and floor operators, the second inequality is by $\frac{N-W}{\Delta} \geq 1$ under the conditions on $N, W,  L_N$ in Theorem \ref{thm: lower bdd}, the third inequality is by $\Delta = \ceil{\frac{N-1}{\floor{\frac{L_N}{2\bar\theta}}}} \leq \frac{N-1}{\floor{\frac{L_N}{2\bar\theta}}}+1$, the fourth inequality is by $L_N \leq (2N+1)\bar \theta$ in Theorem 3's statement, the last inequality is by $L_N \geq 4 \bar \theta$ in Theorem 3's statement.

Moreover, we can show  in Lemma \ref{lem: bdd E(z A- zopt)2} that, for all  $t \in \J_1$,  the online decision $z_{t+1}(\A^z)$ is different from the optimal solution  $z_{t+1}^*$ and the difference is lower bounded,   

\begin{lemma} \label{lem: bdd E(z A- zopt)2}For any online algorithm $\A^z$,
    when $t \in \J_1$, 
    $$\E |z_{t+1}(\A^z)-z_{t+1}^*|^2 \geq c_{10}\sigma^2\rho^{2K}$$
    where $c_{10}$ is a constant determined by $A, B, n, Q, R$ constructed above and $\rho= \frac{\sqrt \zeta -1}{\sqrt \zeta +1}$.
\end{lemma}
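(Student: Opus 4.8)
The plan is to exploit the mismatch between the information available to $\A^z$ at time $t$ and the dependence of the offline optimum on the ``fresh'' target $\theta_{t+W}$. Since $z_{t+1}(\A^z)=\A^z(\{\theta_s\}_{s=0}^{t+W-1})$, the online decision is a (deterministic) function of $\theta_0,\dots,\theta_{t+W-1}$ only; whereas for $t\in\J_1$ the stage $t+W$ is the first stage of an epoch, so $\vartheta\coloneqq\theta_{t+W}$ is independent of all earlier epoch-start variables, hence of everything the algorithm has observed.

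\textbf{Step 1 (affine decomposition of $z_{t+1}^*$ in $\vartheta$).} Starting from \eqref{equ: z represented by theta bar}, $z_{t+1}^*=\delta\sum_{k=1}^{N-1}v_{t+1,k}\theta_k$, I would separate the terms whose index $k$ lies in the epoch containing $t+W$: by construction all those $\theta_k$ equal $\vartheta$, so $z_{t+1}^*=R_0+c^\top\vartheta$, where $c\coloneqq\delta\sum_{k\in\mathcal E}v_{t+1,k}^\top$ (with $\mathcal E$ the set of stages of that epoch) is a \emph{deterministic} vector and $R_0$ is measurable with respect to the $\sigma$-algebra $\mathcal H$ generated by all epoch-start variables other than $\vartheta$. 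Note $z_{t+1}(\A^z)$ is also $\mathcal H$-measurable (for randomized $\A^z$ one further conditions on its internal randomness, per the footnote).

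\textbf{Step 2 (conditional-variance bound).} Conditioning on $\mathcal H$ and using that for any $\mathcal H$-measurable $a$ one has $\E[(a-c^\top\vartheta)^2\mid\mathcal H]\ge\mathrm{Var}(c^\top\vartheta\mid\mathcal H)=\mathrm{Var}(c^\top\vartheta)$, and that the $n$ coordinates of $\vartheta$ are independent with variance $\sigma^2$, taking $a=z_{t+1}(\A^z)-R_0$ and averaging over $\mathcal H$ gives
\[
\E\,|z_{t+1}(\A^z)-z_{t+1}^*|^2 \;\ge\; \mathrm{Var}(c^\top\vartheta)\;=\;\sigma^2\|c\|^2 .
\]

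\textbf{Step 3 (lower bound on $\|c\|$ via decay of $Y=H^{-1}$).} Since $H$ is strictly diagonally dominant with positive diagonal and nonpositive off-diagonal entries, $Y$ is entrywise nonnegative, so every $v_{t+1,k}$ is entrywise nonnegative and $c\ge\delta\,v_{t+1,t+W}^\top\ge 0$ entrywise (already from the $k=t+W$ term). Hence $\|c\|\ge\|c\|_\infty\ge\delta$ times the $n$-th coordinate of $v_{t+1,t+W}$, which equals $\delta\,Y_{t+1,t+W}$. The entry $Y_{t+1,t+W}$ lies in block $(a,b)$ of $Y$ with $a=\lceil(t+1)/n\rceil$, $b=\lceil(t+W)/n\rceil$ and $b-a\le\lceil(W-1)/n\rceil\le K+1$; by Lemma~\ref{lem: H and H inverse properties}, $Y_{t+1,t+W}=y_{a,b}\ge\frac{1-\rho}{\delta n+2}\rho^{\,b-a}\ge\frac{1-\rho}{\delta n+2}\rho^{K+1}$. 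Combining with Step 2 yields $\E\,|z_{t+1}(\A^z)-z_{t+1}^*|^2\ge c_{10}\sigma^2\rho^{2K}$ with $c_{10}=\bigl(\tfrac{\delta(1-\rho)\rho}{\delta n+2}\bigr)^2$, a constant depending only on $n,Q,R$ (through $\delta$ and $\rho$), as claimed.

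\textbf{Main obstacle.} The delicate point is Steps 1--2: one must argue carefully that the online decision genuinely cannot encode $\theta_{t+W}$, that $z_{t+1}^*$ depends on $\theta_{t+W}$ through (possibly several) stages of its epoch but with a coefficient that is a deterministic \emph{nonnegative} vector dominating $\delta\,v_{t+1,t+W}$ — the nonnegativity of $H^{-1}$ is essential here, as it rules out cancellation among those stages — and that the conditional-variance inequality is invoked with the correct conditioning. The index accounting in Step 3 (locating $Y_{t+1,t+W}$ within the block structure so that the decay exponent is $O(K)$ and not larger) is routine but must be done precisely to land exactly at $\rho^{2K}$.
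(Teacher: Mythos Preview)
Your proposal is correct and follows essentially the same strategy as the paper: isolate the dependence of $z_{t+1}^*$ on the unobserved target(s) at and beyond stage $t+W$, use independence to lower-bound the mean-squared error by a variance, and then extract a single entry of $Y=H^{-1}$ at block distance $O(K)$ via Lemma~\ref{lem: H and H inverse properties} together with the entrywise nonnegativity of $Y$.

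The packaging differs in two minor ways. First, the paper keeps \emph{all} future epochs (splitting $z_{t+1}^*$ at index $t+W-1$ and using that the cross term vanishes by independence and zero mean), whereas you single out only the epoch containing $t+W$ via a conditional-variance inequality; since the paper ultimately retains just one $Y$-entry anyway, your route is slightly more direct. Second, the paper selects the entry $Y_{t+1,\,t+1+nK}$, which sits exactly at block offset $K$ and gives $\rho^{2K}$ on the nose; you use $Y_{t+1,\,t+W}$, whose block offset is at most $K+1$, so you pick up an extra $\rho^{2}$ that is harmlessly absorbed into $c_{10}$. Neither difference affects the validity of the argument.
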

The proof is provided in Appendix F.2.

The lower bound on the difference between the online decision and the optimal decision results in a lower bound on the regret. 
By the $n\delta$-strong convexity of $C(\mbf z)$,
\begin{align*}
\E (C(\mbf z(\A^z))-C(\mbf z^*) )&\geq \frac{\delta n}{2}\sum_{t\in \J_1}\E |z_{t+1}(\A^z)-z_{t+1}^*|^2\\
&\geq |\J_1| c_{10}\sigma^2\rho^{2K}\\
&\geq \frac{L_N}{18 \bar \theta} c_{10}\sigma^2\rho^{2K} = \Omega(L_N \rho^{2K})
\end{align*}

By the equivalence between $\A$ and $\A^z$, we have
$\E J(\A)-\E J^* = \Omega(\rho^{2K}L_N)$. 
By the property of expectation, there must exist some realization of the random $\{\theta_t\}$ such that $ J(\A)- J^* = \Omega(\rho^{2K}L_N)$, where $\rho= \frac{\sqrt \zeta -1}{\sqrt \zeta +1}$. This completes the proof. \qed

\subsection{Proof of Lemma \ref{lem: H and H inverse properties}}

\begin{proof}
	
	Since $H$ is a block matrix
	$$
	H = \left(
	\begin{array}{cccc}
	(\delta n+2)I_n& -I_n & \cdots &\\
	-I_n & (\delta n+2)I_n & \ddots & \\
	&  \ddots & \ddots & -I_n\\
	& & -I_n & (q_n+1)I_n
	\end{array}\right)
	$$
	its inverse matrix $Y$ can also be represented as a block matrix. Moreover, let 
	\[
	H_1 = \left(\begin{array}{cccc}
	\delta n +2 & -1 & \cdots & 0\\
	-1 & \delta n +2 & \ddots & 0\\
	\vdots & \ddots & \ddots & \vdots\\
	0 & \cdots & -1 & q_n+1
	\end{array}\right)
	\] and define $\bar Y=(H_1)^{-1}=(y_{ij})_{i,j =1}^{N/n}$. Then the inverse matrix $Y$ can be represented as the block matrix: $Y=(y_{ij}I_n)_{i,j =1}^{N/n}$. 
	
	Now, it suffices to provide a lower bound on $y_{ij}$.


	Since $H_1$ is a symmetric positive definite tridiagonal matrix, by \cite{concus1985block}, the inverse has an explicit formula given by $(H_1)^{-1}_{ij}= a_ib_j$ and 
	\begin{align*}
	a_t &= \frac{\rho}{1- \rho^2}\left( \frac{1}{\rho^t}   -\rho^t\right)\\
	b_t &= c_3 \frac{1}{\rho^{N-t}} + c_4 \rho^{N-t} \\
	c_3 & = b_N\left( \frac{(q_n+1)\rho -\rho^2}{1-\rho^2}  \right)\\
	c_4 & = b_N \frac{1-(q_n+1)\rho}{1-\rho^2}\\
	b_N& = \frac{1}{- a_{N-1}+ (q_n+1)  a_N}
	\end{align*}
	
	In the following, we will show $y_{t,t+\tau}= a_t b_{t+\tau} \geq \frac{1-\rho}{\delta n +2} \rho^\tau$ when $\tau\geq 0$.  Firstly, it is easy to verify that
	$$\rho^t   a_t = \frac{\rho}{1-\rho^2}(1-\rho^{2t})\geq \rho$$
	since $t\geq 1$ and $\rho<1$.
	
	Secondly, we bound $b_N$ in the following way:
	\begin{align*}
	\rho^{-N} b_N  = \frac{1}{(q_n+1)(1-\rho^{2N})-(\rho-\rho^{2N-1}) } \frac{1-\rho^2}{\rho}\geq \frac{1}{(\delta n+2) } \frac{1-\rho^2}{\rho}  
	\end{align*}
	because $0<(q_n+1)(1-\rho^{2N})-(\rho-\rho^{2N-1}) \leq (\delta n+2)$ by $n\delta <q_n <n\delta +1$ in Lemma \ref{lem: Pe's form}.

	Thirdly, we bound $b_{t+\tau}$. When $1-(q_n+1)\rho \geq 0$
	\begin{align*}
	\rho^{N-t-\tau}b_{t+\tau}&=  b_N\left( \frac{(q_n+1)\rho -\rho^2}{1-\rho^2}  \right) +  b_N \frac{1-(q_n+1)\rho}{1-\rho^2} \rho^{2(N-t-\tau)}\\
	& \geq b_N\left( \frac{(q_n+1)\rho -\rho^2}{1-\rho^2}  \right) \\
	& \geq b_N\left( \frac{(\delta n+1)\rho -\rho^2}{1-\rho^2}  \right) \\
	& = \frac{1-\rho}{1-\rho^2}b_N
	\end{align*}
	where the first inequality is by
	 $1-(q_n+1)\rho \geq 0$, the second inequality is by $qn >n\delta$ in Lemma \ref{lem: Pe's form}, and
	the last equality is by $\rho^2 - (\delta n+2)\rho +1=0$.
	
	When  $1-(q_n+1)\rho < 0$
	\begin{align*}
	\rho^{N-t-\tau}b_{t+\tau}&=  b_N\left( \frac{(q_n+1)\rho -\rho^2}{1-\rho^2}  \right) +  b_N \frac{1-(q_n+1)\rho}{1-\rho^2} \rho^{2(N-t-\tau)}\\
	& \geq b_N\left( \frac{(q_n+1)\rho -\rho^2}{1-\rho^2}  \right) +  b_N \frac{1-(q_n+1)\rho}{1-\rho^2} \\
	& \geq b_N \geq \frac{1-\rho}{1-\rho^2}b_N
	\end{align*}
	where the first inequality is by $1-(q_n+1)\rho < 0, \rho\leq 1$, the second inequality is by $ \rho^{2(N-t-\tau)}\leq 1$. Thus, we obtained a lower bound for $b_{t+\tau}$.
	
	Combining bounds of $a_t, b_{t+\tau}, b_N$ together yields
	\begin{align*}
	y_{t,t+\tau}= a_tb_{t+\tau}&\geq \rho b_N\frac{1-\rho}{1-\rho^2} \rho^{\tau-N}\geq \frac{1-\rho}{(\delta n+2)}\rho^\tau
	\end{align*}
	
\end{proof}

\subsection{Proof of Lemma \ref{lem: bdd E(z A- zopt)2}}

\begin{proof}
    By our construction, $\theta_t$ is random,  $z_{t+1}^\A$ is also random and its randomness is provided by $\theta_1, \dots, \theta_{t+W-1}$, while $z_{t+1}^* $ is determined by all $\theta_t$. When $t\in \J_1$, 
	\begin{align*}
	\E |z_{t+1}^\A-z^*_{t+1}|^2 &= \E | z_{t+1}^\A- \delta\sum_{i=1}^{N-1} v_{t+1,i}  \theta_i|^2  \\
	& = \E |z_{t+1}^\A- \delta\sum_{i=1}^{t+W-1}v_{t+1,i}  \theta_i \|^2 +\delta^2 \E | \sum_{i=t+W}^{N-1} v_{t+1,i}  \theta_i|^2 \\
	& \geq \delta^2 \E | \sum_{i=t+W}^{N-1} v_{t+1,i}  \theta_i|^2,
	\end{align*}
	where the first equality is by \eqref{equ: z represented by theta bar}, the second equality is by $\E\theta_\tau=0$ for all $\tau$, and $\theta_{t+W}, \dots, \theta_N$ are independent of $\theta_1, \dots, \theta_{t+W-1}$ when $t\in \J_1$.
	
	Further,
	\begin{align*}
	     \E \large| \sum_{i=t+W}^{N-1} v_{t+1,i}  \theta_i|^2& = \E | \sum_{i=t+W}^{t+W+\Delta-1} v_{t+1,i}  \theta_{t+W}|^2+\dots+\E | \sum_{i=(E-1)\Delta+1}^{N-1} v_{t+1,i}  \theta_{(E-1)\Delta+1}|^2\\
	    & = \| \sum_{i=t+W}^{t+W+\Delta-1} v_{t+1,i}\|^2 \sigma^2+\dots + \| \sum_{i=(E-1)\Delta+1}^{N-1} v_{t+1,i}\|^2 \sigma^2\\
	    & \geq \sigma^2 \sum_{i=t+W}^{N-1}  \|v_{t+1,i}\|^2  \\
	    &= \sigma^2 \sum_{i=t+W}^{N-1} (\sum_{k=0}^{n-1} Y_{t+1,i-k}^2)  \geq \sigma^2 \sum_{i=t+1+W-n}^{N-1} Y_{t+1,i}^2  \\
	    &=\sigma^2 \sum_{i=t+1+W-n}^{N} Y_{t+1,i}^2
	\end{align*}
	where the first equality is because the theta in one epoch are equal by our construction,
	the second equality is because $\text{cov}(\theta_\tau)=\sigma^2 I_n$, 
	the first inequality is because  the entries of $v_{t+1,i}$  are nonnegative, the third equality is by the definition of $v_{t+1,i}$ in \eqref{equ: z represented by theta bar}, and
	the last equality is because when $t \in \J_1$, $Y_{t+1, N}=0$.
	
		When $1\leq W\leq n$, $\sum_{i=t+1+W-n}^{N} Y_{t+1,i}^2   \geq Y_{t+1,t+1}^2= Y_{t+1, t+1+n \floor{\frac{W-1}{n}}}$. When $W>n$, $\sum_{i=t+1+W-n}^{N} Y_{t+1,i}^2  \geq Y_{t+1,t+1+n\ceil{\frac{W-n}{n}}}^2 $. Moreover, when $W\geq 1$, $\ceil{\frac{W-n}{n}}=\floor{\frac{W-1}{n}}$. In summary, for  $W\geq 1$, 
	\begin{align*}
	&  \sum_{i=t+1+W-n}^{N} Y_{t+1,i}^2   \geq  Y_{t+1,t+1+n\floor{\frac{W-1}{n}}}^2 \geq \rho^{2 K}(\frac{1-\rho}{\delta n +2})^2
	\end{align*}
	where the last inequality is by Lemma \ref{lem: H and H inverse properties}. This completes the proof. 

\end{proof}

\section{Proofs of the LQT's properties used in Appendix \ref{append: LQT}}\label{append: LQT proof}

In this section, we provide proofs for the properties of LQ tracking (LQT) used in Appendix \ref{append: LQT}.

\subsection{Preliminaries: dynamic programming for finite-horizon LQT}\label{subsec: optimal soln to finite LQT}

In this section, we consider a discrete time LQ tracking problem with time-varying cost functions and time-invariant dynamical system:
\begin{align*}
\min_{x_t,u_t}& \ \ \frac{1}{2} \sum_{t=0}^{N-1} \left[ (x_t-\theta_t)^\top  Q_t (x_t-\theta_t)+ u_t^\top  R_t u_t \right] + \frac{1}{2} (x_{N}-\theta_{N})^\top  Q_{N}(x_{N}-\theta_{N})\\
\text{s.t. } \ & \ x_{t+1}=Ax_t + Bu_t, \quad \quad t=0, \dots, N-1
\end{align*}
where $x_0=0$ for simplicity. 

The problem can be solved by dynamic programming.

\begin{theorem}[Dynamic programming for  the finite-horizon LQT]\label{thm: DP solution to LQT}
	Consider a finite-horizon time-varying LQ tracking problem. Let $V_t(x_t)$ be the cost to go from $k=t$ to $k=N$, then 
	\begin{align*}
	V_t(x_t)= \frac{1}{2} (x_{t}-\beta_{t})^\top  P_{t}(x_{t}-\beta_{t})+\frac{1}{2} \sum_{k=t}^{N-1}  (A\theta_k  - \beta_{k+1})^\top  H_k (A\theta_k  - \beta_{k+1})
	\end{align*}
	for $t=0, \dots, N$. The parameters can be obtained by
	\begin{align*}
	 P_t& = Q_t + A^\top  M_t A, \quad t=0, \dots, N-1, \quad P_N=Q_N\\
	 M_t& = P_{t+1}-P_{t+1}B(R_t+B^\top  P_{t+1}B)^{-1}B^{T}P_{t+1}, \quad t=0, \dots, N-1\\
	\beta_t&  = (Q_t + A^\top  M_t A)^{-1} (Q_t \theta_t + A^\top  M_t \beta_{t+1}), \quad t=0, \dots, N-1\\
	\beta_{N}& = \theta_{N}\\
H_t	&  = M_t - M_t A(Q_t +A^\top M_t A)^{-1} A^\top  M_t , \quad t=0, \dots, N-1
	\end{align*}
	
	The optimal controller is 
	\begin{align*}
	u_t^* = - K_t x_t + K_t' \beta_{t+1}, \quad t=0, \dots, N-1 
	\end{align*}
	where the parameters are 
	\begin{align*}
	&K_t=(R_t + B^\top  P_{t+1}B)^{-1} B^\top  P_{t+1}A\\
	&K_t'=(R_t + B^\top  P_{t+1}B)^{-1} B^\top  P_{t+1}
	\end{align*}
	
	There is another way to write the optimal controller: 
	\begin{align*}
	u_t^* =- K_t x_t + K_t^{\alpha} \alpha_{t+1}  \quad t=0, \dots, N-1 
	\end{align*}
	where the parameters are 
	\begin{align*}
	 K_t^\alpha &= (R_t + B^\top  P_{t+1}B)^{-1} B^\top \\
	 \alpha_t &= P_t \beta_t\\
	 \alpha_t& = Q_t \theta_t +  (A-BK_t)^\top \alpha_{t+1}, \quad t=0, \dots, N-1\\
	 \alpha_{N}&=P_{N}\theta_{N}
	\end{align*}

\end{theorem}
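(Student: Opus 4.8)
The plan is to prove the theorem by backward induction on $t$, running from $t=N$ down to $t=0$, using the Bellman recursion $V_t(x_t) = \min_{u_t}[\frac{1}{2}(x_t-\theta_t)^\top Q_t(x_t-\theta_t) + \frac{1}{2}u_t^\top R_t u_t + V_{t+1}(Ax_t+Bu_t)]$. The base case $t=N$ is immediate: $V_N(x_N) = \frac{1}{2}(x_N-\theta_N)^\top Q_N(x_N-\theta_N)$ matches the claimed form with $P_N=Q_N$, $\beta_N=\theta_N$, and an empty sum. For the inductive step I assume $V_{t+1}(x) = \frac{1}{2}(x-\beta_{t+1})^\top P_{t+1}(x-\beta_{t+1}) + c_{t+1}$ with $c_{t+1} = \frac{1}{2}\sum_{k=t+1}^{N-1}(A\theta_k-\beta_{k+1})^\top H_k(A\theta_k-\beta_{k+1})$, substitute $x_{t+1}=Ax_t+Bu_t$, and expand; collecting terms in $u_t$, the bracket is a quadratic $\frac{1}{2}u_t^\top S_t u_t + u_t^\top B^\top P_{t+1}(Ax_t-\beta_{t+1}) + (\text{terms free of }u_t)$ with $S_t := R_t + B^\top P_{t+1}B$, which is invertible since $R_t$ is positive definite and $P_{t+1}$ positive semidefinite (the latter holds by induction).

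First I would carry out the unconstrained minimization over $u_t$. The first-order condition $S_t u_t + B^\top P_{t+1}(Ax_t-\beta_{t+1})=0$ yields $u_t^* = -K_t x_t + K_t'\beta_{t+1}$ with $K_t = S_t^{-1}B^\top P_{t+1}A$ and $K_t' = S_t^{-1}B^\top P_{t+1}$, which is exactly the claimed controller. Substituting the optimizer back and using $\min_u[\frac{1}{2}u^\top S u + b^\top u + c] = c - \frac{1}{2}b^\top S^{-1}b$ with $b = B^\top P_{t+1}(Ax_t-\beta_{t+1})$, I obtain $V_t(x_t) = \frac{1}{2}(x_t-\theta_t)^\top Q_t(x_t-\theta_t) + \frac{1}{2}(Ax_t-\beta_{t+1})^\top M_t(Ax_t-\beta_{t+1}) + c_{t+1}$, where $M_t = P_{t+1} - P_{t+1}BS_t^{-1}B^\top P_{t+1}$, exactly the claimed $M_t$ (and $M_t$ is symmetric and positive semidefinite, preserving the induction hypothesis for $P_t$).

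The main obstacle is the final completing-the-square in $x_t$: I must show $\frac{1}{2}(x_t-\theta_t)^\top Q_t(x_t-\theta_t) + \frac{1}{2}(Ax_t-\beta_{t+1})^\top M_t(Ax_t-\beta_{t+1})$ equals $\frac{1}{2}(x_t-\beta_t)^\top P_t(x_t-\beta_t)$ plus a constant. Expanding, the quadratic-in-$x_t$ part is $\frac{1}{2}x_t^\top(Q_t + A^\top M_t A)x_t = \frac{1}{2}x_t^\top P_t x_t$ with $P_t = Q_t + A^\top M_t A$; the linear part is $-x_t^\top(Q_t\theta_t + A^\top M_t\beta_{t+1})$, and since $Q_t$ positive definite makes $P_t$ invertible, matching against $-x_t^\top P_t\beta_t$ forces $\beta_t = P_t^{-1}(Q_t\theta_t + A^\top M_t\beta_{t+1})$, as claimed. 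The leftover constant is $\frac{1}{2}\theta_t^\top Q_t\theta_t + \frac{1}{2}\beta_{t+1}^\top M_t\beta_{t+1} - \frac{1}{2}\beta_t^\top P_t\beta_t$; a short computation substituting this $\beta_t$ and using $P_t = Q_t + A^\top M_t A$ together with $H_t = M_t - M_t A P_t^{-1}A^\top M_t$ shows it equals $\frac{1}{2}(A\theta_t-\beta_{t+1})^\top H_t(A\theta_t-\beta_{t+1})$, giving $c_t = c_{t+1} + \frac{1}{2}(A\theta_t-\beta_{t+1})^\top H_t(A\theta_t-\beta_{t+1})$ and closing the induction. This bookkeeping — tracking the cross terms and verifying the constant collapses to the $H_t$ form — is the only delicate point; everything else is mechanical.

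Finally I would verify the alternative ($\alpha$) form. Setting $\alpha_t := P_t\beta_t$, the definition of $\beta_t$ gives at once $\alpha_t = Q_t\theta_t + A^\top M_t\beta_{t+1}$, and since $A^\top M_t = A^\top P_{t+1} - A^\top P_{t+1}BS_t^{-1}B^\top P_{t+1} = A^\top P_{t+1} - K_t^\top B^\top P_{t+1} = (A-BK_t)^\top P_{t+1}$ (using $K_t^\top = A^\top P_{t+1}BS_t^{-1}$, which holds because $S_t$ and $P_{t+1}$ are symmetric), we get $A^\top M_t\beta_{t+1} = (A-BK_t)^\top\alpha_{t+1}$, hence $\alpha_t = Q_t\theta_t + (A-BK_t)^\top\alpha_{t+1}$ with $\alpha_N = P_N\theta_N$. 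For the controller, $K_t'\beta_{t+1} = S_t^{-1}B^\top P_{t+1}\beta_{t+1} = S_t^{-1}B^\top\alpha_{t+1} = K_t^\alpha\alpha_{t+1}$, so $u_t^* = -K_t x_t + K_t^\alpha\alpha_{t+1}$ with $K_t^\alpha = S_t^{-1}B^\top = (R_t+B^\top P_{t+1}B)^{-1}B^\top$, as stated.
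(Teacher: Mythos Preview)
Your proposal is correct and follows essentially the same route as the paper's proof: backward induction via the Bellman recursion, first minimizing the quadratic in $u_t$ to obtain $u_t^*=-K_tx_t+K_t'\beta_{t+1}$ and the $M_t$ expression, then completing the square in $x_t$ to read off $P_t$, $\beta_t$, and the $H_t$ constant. If anything, you are more thorough than the paper, which leaves the verification of the $\alpha$-form recursion $\alpha_t = Q_t\theta_t + (A-BK_t)^\top \alpha_{t+1}$ and the identity $K_t'\beta_{t+1}=K_t^\alpha\alpha_{t+1}$ implicit; your derivation of $A^\top M_t = (A-BK_t)^\top P_{t+1}$ fills that in cleanly.
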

\begin{proof}
    The proof is straightforward by following dynamic programming procedures.
    
    Firstly, it is direct to verify that $V_N(x_N)= \frac{1}{2}(x_N-\theta_N)^\top Q_N(x_N-\theta_N)$.
    Then, suppose the claim of Theorem \ref{thm: DP solution to LQT} is true at $t+1$, we will verify the stage $t$ in the following.
    \begin{align*}
        V_t(x_t)=&\min_{u_t}\big[\frac{1}{2}(x_t-\theta_t)^\top Q_t (x_t-\theta_t) +\frac{1}{2} u_t^\top R_t u_t+ V_{t+1}(Ax_t+Bu_t)\big]\\
         =&\frac{1}{2}\min_{u_t}\big[(x_t-\theta_t)^\top Q_t (x_t-\theta_t) + u_t^\top R_t u_t + (Ax_{t}+Bu_t-\beta_{t+1})^\top P_{t+1}(Ax_t+Bu_t -\beta_{t+1})\\
        &  +  \sum_{k=t+1}^{N-1}  (A\theta_k  - \beta_{k+1})^\top  H_k (A\theta_k  - \beta_{k+1})
        \big]\\
         = &\frac{1}{2}(Ax_t-\beta_{t+1})^\top (P_{t+1}-P_{t+1}B(R_t +B^\top P_{t+1}B)^{-1}B^\top P_{t+1})(Ax_t-\beta_{t+1})\\
        &  + \frac{1}{2}(x_t-\theta_t)^\top Q_t (x_t-\theta_t)+ \frac{1}{2}\sum_{k=t+1}^{N-1}  (A\theta_k  - \beta_{k+1})^\top  H_k (A\theta_k  - \beta_{k+1})\\
        =& \frac{1}{2}(Ax_t-\beta_{t+1})^\top M_t (Ax_t-\beta_{t+1})  + \frac{1}{2}(x_t-\theta_t)^\top Q_t (x_t-\theta_t)\\
        &+ \frac{1}{2}\sum_{k=t+1}^{N-1}  (A\theta_k  - \beta_{k+1})^\top  H_k (A\theta_k  - \beta_{k+1})\\
        =& \frac{1}{2}(x_t-\beta_t)^\top P_t(x_t-\beta_t) - \frac{1}{2}(Q_t \theta_t+ A^\top M_t \beta_{t+1})^\top(Q_t +A^\top M_t A)^{-1}(Q_t \theta_t+ A^\top M_t \beta_{t+1})\\
        &+ \frac{1}{2}\theta_t^\top Q_t \theta_t + \frac{1}{2} \beta_{t+1}^\top M_t \beta_{t+1}+
        \frac{1}{2}\sum_{k=t+1}^{N-1}  (A\theta_k  - \beta_{k+1})^\top  H_k (A\theta_k  - \beta_{k+1})\\
        =& \frac{1}{2}(x_t-\beta_t)^\top P_t(x_t-\beta_t)+
        \frac{1}{2}\sum_{k=t}^{N-1}  (A\theta_k  - \beta_{k+1})^\top  H_k (A\theta_k  - \beta_{k+1})
    \end{align*}
    where the third equality is by noticing that the optimal control input is
    $$u_t^*= -(R_t +B^\top P_{t+1}B)^{-1}B^\top P_{t+1}(Ax_t-\beta_{t+1})=-K_tx_t+K_t'\beta_{t+1},$$
    the fourth equality is by $M_t$'s definition, the fifth equality is by combining the two quadratic terms of $x_t$ as one quadratic term with a constant, and the last equality is by definition.
\end{proof}

\subsection{Proof of Lemma \ref{lem: uniform bdd on Pe}}
 In the following, we first prove that the recursive solution $P_t$ to the finite-horizon LQT is bounded. Then,  we can prove Lemma \ref{lem: uniform bdd on Pe} by taking limits.
\begin{lemma}[Bounded $P_t$ for finite-horizon LQT]\label{lem: Pt in Pbar Punderline}
Consider a finite-horizon time-varying LQT problem.	For any $N$, any $0\leq t \leq N$, any $Q_t \in \Q, R_t \in \Rc, Q_N \in \Pc$, we have $P_t \in \Pc$ where $P_t$ is defined in Theorem \ref{thm: DP solution to LQT}.
\end{lemma}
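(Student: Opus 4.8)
The plan is to establish the two-sided bound $\underline P \leq P_t \leq \bar P$ for all $t$ by backward induction on $t$, running from $t=N$ down to $t=0$. The two ingredients I will exploit are: (i) the one-step Riccati map is monotone in all of its arguments with respect to the positive semidefinite order, and (ii) $\bar P$ and $\underline P$ are fixed points of the Riccati maps associated with $(\bar Q,\bar R)$ and $(\underline Q,\underline R)$ respectively. Concretely, for $P\geq 0$ and $R>0$ set
\[
\Phi_{Q,R}(P) \coloneqq Q + A^\top\!\left(P - PB(R+B^\top PB)^{-1}B^\top P\right)A,
\]
so that Theorem~\ref{thm: DP solution to LQT} gives $P_t=\Phi_{Q_t,R_t}(P_{t+1})$ for $0\le t\le N-1$ and $P_N=Q_N$, while by definition of the DARE solution $\Phi_{\bar Q,\bar R}(\bar P)=\bar P$ and $\Phi_{\underline Q,\underline R}(\underline P)=\underline P$ (these solutions exist and are positive definite because $(A,B)$ is controllable, hence stabilizable, by Assumption~\ref{ass: controllable}, and $\bar Q,\underline Q>0$).

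The key step is the monotonicity of $\Phi$, which I would prove through the standard completion-of-squares identity: for any $K\in\R^{m\times n}$,
\[
Q + K^\top R K + (A-BK)^\top P (A-BK) = \Phi_{Q,R}(P) + (K-K^*)^\top(R+B^\top PB)(K-K^*),
\]
with $K^*=(R+B^\top PB)^{-1}B^\top PA$. Since $R+B^\top PB\geq R>0$ is invertible and the correction term is positive semidefinite, this yields the variational representation $\Phi_{Q,R}(P)=\min_K\big[Q+K^\top RK+(A-BK)^\top P(A-BK)\big]$, the minimum taken in the positive semidefinite order and attained at $K^*$. From this, if $0\leq P\leq P'$, $Q\leq Q'$ and $0<R\leq R'$, then $\Phi_{Q,R}(P)\leq\Phi_{Q',R'}(P')$, because for each fixed $K$ the bracketed expression is monotone in $(Q,R,P)$ in the Loewner order, and a function that is pointwise smaller has a smaller minimum value.

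With monotonicity in hand the induction is short. For $t=N$, $P_N=Q_N\in\Pc$ is exactly the hypothesis $Q_N\in\Pc$. Assuming $\underline P\leq P_{t+1}\leq\bar P$, and using $\underline Q\leq Q_t\leq\bar Q$ and $\underline R\leq R_t\leq\bar R$ (from $Q_t\in\Q$, $R_t\in\Rc$), monotonicity together with the fixed-point identities gives $P_t=\Phi_{Q_t,R_t}(P_{t+1})\leq\Phi_{\bar Q,\bar R}(\bar P)=\bar P$ and likewise $P_t=\Phi_{Q_t,R_t}(P_{t+1})\geq\Phi_{\underline Q,\underline R}(\underline P)=\underline P$, so $P_t\in\Pc$, closing the induction. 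I expect the only real obstacle to be the monotonicity claim, handled as above via the min-over-$K$ form; the one minor technical point to verify is that $\Phi_{Q_t,R_t}(P_{t+1})$ is well defined throughout, which holds because the iterates stay $\geq\underline P\geq 0$ and $R_t\geq\underline R>0$, so $R_t+B^\top P_{t+1}B$ is invertible.
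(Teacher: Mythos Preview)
Your proof is correct. The paper, however, takes a different route that avoids the one-step Riccati monotonicity lemma altogether. Instead of induction on the map $\Phi_{Q,R}$, the paper compares the full cost-to-go functionals directly: since $\underline Q\le Q_t\le\bar Q$, $\underline R\le R_t\le\bar R$, and $\underline P\le Q_N\le\bar P$, for every admissible trajectory $(x_s,u_s)_{s\ge k}$ one has
\[
\sum_{s=k}^{N-1}\!\big(x_s^\top \underline Q x_s+u_s^\top \underline R u_s\big)+x_N^\top \underline P x_N
\;\le\;
\sum_{s=k}^{N-1}\!\big(x_s^\top Q_s x_s+u_s^\top R_s u_s\big)+x_N^\top Q_N x_N
\;\le\;
\sum_{s=k}^{N-1}\!\big(x_s^\top \bar Q x_s+u_s^\top \bar R u_s\big)+x_N^\top \bar P x_N,
\]
then minimizes over all admissible trajectories. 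The middle term becomes $x_k^\top P_k x_k$. For the outer terms the paper uses that $\bar P$ and $\underline P$ are DARE fixed points, so the Riccati recursion with constant $(\bar Q,\bar R)$ (resp.\ $(\underline Q,\underline R)$) and terminal cost $\bar P$ (resp.\ $\underline P$) is stationary; hence those minimized costs equal $x_k^\top \bar P x_k$ and $x_k^\top \underline P x_k$. Since this holds for every $x_k$, $\underline P\le P_k\le\bar P$.

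The two arguments are really the global versus local faces of the same idea: your $\min_K$ variational formula for $\Phi_{Q,R}(P)$ is the one-step version of the paper's $\min$ over full trajectories. Your approach buys a reusable operator-monotonicity lemma at the cost of the completion-of-squares calculation; the paper's approach is a bit quicker here because the value-function interpretation $V_k(x_k)=x_k^\top P_k x_k$ (with $\theta_t=0$) is already available from Theorem~\ref{thm: DP solution to LQT}, so the comparison is immediate without isolating the Riccati map.
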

\begin{proof}
In the following, we use the notations and definitions introduced in Appendix \ref{append: LQT}.1 and Theorem \ref{thm: DP solution to LQT}.

Since $P_t$ does not depend on $\theta_t$, we let $\theta_t=0$ and consider the LQR problem for simplicity.
	Since $\underline Q\leq Q_t \leq \bar Q, \underline R\leq R_t \leq \bar R,
	$ for $0\leq t \leq N-1$ and $\underline P\leq Q_N \leq \bar P$, we have for any $x_t, u_t$, $k$, $Q_t, R_t, Q_N$,
	\begin{align*}
	& \sum_{t=k}^{N-1} (x_t^\top  Q_t x_t +u_t^\top  R_t u_t)+ x_N^\top Q_Nx_N \leq \sum_{t=k}^{N-1} (x_t^\top  \bar Q x_t +u_t^\top  \bar R u_t)+ x_N^\top \bar Px_N\\
	& \sum_{t=k}^{N-1} (x_t^\top  Q_t x_t +u_t^\top  R_t u_t)+ x_N^\top Q_Nx_N \geq \sum_{t=k}^{N-1} (x_t^\top  \underline Q x_t +u_t^\top  \underline R u_t)+ x_N^\top \underline Px_N
	\end{align*}
	Taking minimum over all feasible trajectories on both sides yields
	\begin{align*}
	& \min \sum_{t=k}^{N-1} (x_t^\top  Q_t x_t +u_t^\top  R_t u_t)+ x_N^\top Q_Nx_N \leq  \min\sum_{t=k}^{N-1} (x_t^\top  \bar Q x_t +u_t^\top  \bar R u_t)+ x_N^\top \bar Px_N\\
	&  \min\sum_{t=k}^{N-1} (x_t^\top  Q_t x_t +u_t^\top  R_t u_t)+ x_N^\top Q_Nx_N \geq  \min\sum_{t=k}^{N-1} (x_t^\top  \underline Q x_t +u_t^\top  \underline R u_t)+ x_N^\top \underline Px_N
	\end{align*}
	Notice that the left-hand-side terms of both inequalities are equal to $x_k^\top P_k x_k$. Moreover, notice that
	\begin{align*}
	x_k^\top \bar P x_k=\min_{x_{t+1}=Ax_t+Bu_t}\sum_{t=k}^{N-1} (x_t^\top  \bar Q x_t +u_t^\top  \bar R u_t)+ x_N^\top \bar Px_N
	\end{align*}
	because $\bar P=P^e(\bar Q, \bar R)$ is the solution to the DARE. The same holds for $\underline P$.
	Therefore, we have
	$$ x_k^\top \underline P x_k \leq x_k^\top  P_k x_k\leq x_k^\top \bar P x_k$$
	for any $x_k$. Thus, $\underline P \leq P_k \leq \bar P$, i.e. $P_k \in \Pc$.
\end{proof}

\begin{proof}[Proof of Lemma \ref{lem: uniform bdd on Pe}] In the following, we use the notations and definitions introduced in Appendix \ref{append: LQT}.1 and Theorem \ref{thm: DP solution to LQT}. 
Since $P^e$ is not influenced by $\theta_t$, we let $\theta_t=0$ for simplicity. Consider a \textit{finite-horizon} LQR problem: $ \sum_{k=0}^{N-1} (x_k^\top  Q x_k +u_k^\top  R u_k)+ x_N^\top Q_N x_N$, where $Q_N\in \Pc$. By Lemma \ref{lem: Pt in Pbar Punderline}, we have $P_k \in \Pc$. Since $P_k \to P^e$ as $k\to -\infty$ \cite{bertsekas3rddynamic}, and since $\Pc$ is a closed set \cite{zedek1965continuity},  we have $P^e \in \Pc$. Since $P^e$ and $\bar P$ are positive definite, we have $  \|P^e \|_2\leq \upsilon_{max}(\bar P)$.

	
\end{proof}

\subsection{Proof of Lemma \ref{lem: he formula}}
In the following, we will provide and prove an enhanced version of Lemma \ref{lem: he formula} with detailed characterization of the solution to the Bellman equations in Proposition \ref{prop: opt avg cost LQT}. 

\begin{proposition}[Optimal solution to average-cost LQ tracking]\label{prop: opt avg cost LQT}
	Suppose $(A, B)$ is controllable, $Q,R$ are positive definite. The optimal average cost $\lambda^e$ does not depend on the initial state $x_0$ and is equal to
	\begin{align*}
	\lambda^e =  \frac{1}{2}(A\theta-\beta^e)^\top  H^e(A\theta-\beta^e),
	\end{align*}
	where $M^e=P^e-P^eB(R+B^\top P^eB)^{-1}B^\top P^e$ and $H^e=M^e-M^eA(Q+A^\top M^eA)^{-1}A^\top M^e$.
	
	In addition, a bias function of the Bellman equations $h^e(x)+\lambda^e= \min_u (f(x)+g(u)+h^e(Ax+Bu))$ can be represented by
	$$h^e(x)=\frac{1}{2}(x-\beta^e)^\top  P^e(x-\beta^e).$$
	where $P^e=P^e(Q,R)$.
	
	The optimal controller is $$u=-K^e x +K'\beta^e$$
	 where 
	 $K^e=(R+B^\top P^eB)^{-1}B^\top P^eA$, $K'=(R+B^\top P^eB)^{-1}B^\top P^e$, and $\beta^e$ satisfies
	 \begin{align}
	     \beta^e=(P^e)^{-1}\alpha^e=F\theta \label{equ: formula for beta e}
	 \end{align}
	 where $\alpha^e= Q\theta+(A-BK^e)^\top \alpha^e$ and thus 
	 $F= (P^{e})^{-1}(I-(A-BK^e)^\top )^{-1}Q$. 
	 
\end{proposition}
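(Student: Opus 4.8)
The plan is to verify directly that the candidate pair $(h^e,\lambda^e)$ solves the Bellman equations and then invoke the standard verification argument for average-cost problems to conclude that $\lambda^e$ is the optimal average cost, independent of $x_0$, and that the stated feedback law is optimal. I would first record the classical LQR facts forced by the hypotheses: since $(A,B)$ is controllable and $Q,R>0$, the DARE \eqref{equ: DARE} has a unique positive definite solution $P^e$, the matrix $L\coloneqq R+B^\top P^eB>0$, and the closed-loop matrix $A-BK^e$ with $K^e=L^{-1}B^\top P^eA$ is Schur stable. Stability makes $I-(A-BK^e)^\top$ invertible, so $\beta^e\coloneqq (P^e)^{-1}(I-(A-BK^e)^\top)^{-1}Q\theta=F\theta$ is well defined, equivalently $\alpha^e\coloneqq P^e\beta^e$ solves $\alpha^e=Q\theta+(A-BK^e)^\top\alpha^e$. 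I would also note the identity $(A-BK^e)^\top P^e=A^\top M^e$ with $M^e=P^e-P^eBL^{-1}B^\top P^e$ (expand $K^e$), which shows the fixed point for $\beta^e$ is equivalently $P^e\beta^e=Q\theta+A^\top M^e\beta^e$, matching the stationary limit of the recursion for $\beta_t$ in Theorem~\ref{thm: DP solution to LQT}.

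Next I would carry out the completion of the square. With $f(x)=\tfrac12(x-\theta)^\top Q(x-\theta)$ and $g(u)=\tfrac12 u^\top Ru$, minimizing $f(x)+g(u)+h^e(Ax+Bu)$ over $u$ gives the first-order condition $Lu=-B^\top P^e(Ax-\beta^e)$, hence the minimizer $u^*=-K^ex+K'\beta^e$ with $K'=L^{-1}B^\top P^e$, as claimed. Substituting $u^*$ back, the $\tfrac12u^\top Ru$ term and the quadratic-in-$(Ax+Bu-\beta^e)$ term combine, via the Riccati identity $(I-BL^{-1}B^\top P^e)^\top P^e(I-BL^{-1}B^\top P^e)+P^eBL^{-1}RL^{-1}B^\top P^e=M^e$, into $\tfrac12(Ax-\beta^e)^\top M^e(Ax-\beta^e)$. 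Using the DARE $P^e=Q+A^\top M^eA$ to merge $\tfrac12(x-\theta)^\top Q(x-\theta)$ and $\tfrac12(Ax-\beta^e)^\top M^e(Ax-\beta^e)$ and then completing the square in $x$, the $x$-dependent part collapses to exactly $\tfrac12(x-\beta^e)^\top P^e(x-\beta^e)=h^e(x)$, precisely because $\beta^e$ satisfies $P^e\beta^e=Q\theta+A^\top M^e\beta^e$; the residual constant, which I call $\lambda^e$, is read off, and a short computation (or direct comparison with the per-stage constant $\tfrac12(A\theta-\beta^e)^\top H^e(A\theta-\beta^e)$ in Theorem~\ref{thm: DP solution to LQT}, with $H^e=M^e-M^eA(Q+A^\top M^eA)^{-1}A^\top M^e$) yields the claimed closed form.

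Finally I would close with the verification argument. Telescoping the Bellman identity $f(x_t)+g(u_t)\ge \lambda^e+h^e(x_t)-h^e(x_{t+1})$ along any admissible trajectory gives $\tfrac1N\sum_{t=0}^{N-1}[f(x_t)+g(u_t)]\ge \lambda^e+\tfrac1N(h^e(x_0)-h^e(x_N))$; since $h^e$ is a nonnegative quadratic, any policy has average cost at least $\lambda^e-o(1)$ (policies failing to keep $x_N$ bounded incur cost diverging to $+\infty$, hence are not competitive), while the stationary policy $u_t=-K^ex_t+K'\beta^e$ keeps the state bounded, because $A-BK^e$ is Schur, and attains equality in the Bellman equations at every step, so its average cost equals $\lambda^e$. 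Therefore the optimal average cost is $\lambda^e$ for every $x_0$, $h^e$ is a bias function, and $u^*$ is optimal. The main obstacle is purely computational: keeping the Riccati-type identities straight through the completion of the square — in particular checking that the residual constant collapses to exactly $\tfrac12(A\theta-\beta^e)^\top H^e(A\theta-\beta^e)$ — together with making sure the stability facts underpinning the definition of $\beta^e$ and the verification step are correctly attributed to controllability plus $Q,R>0$.
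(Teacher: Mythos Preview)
Your proposal is correct. Both you and the paper carry out the same completion-of-the-square computation to verify that $(h^e,\lambda^e)$ solves the Bellman equation, and that the minimizing $u$ is $-K^ex+K'\beta^e$; this is exactly Part~ii) of the paper's proof. The difference is in how optimality of $\lambda^e$ and of the controller are established. The paper (Part~i) identifies $\lambda^e$ as the limit of the finite-horizon averaged optimum $J_N^*(x_0)/N$, which requires showing $P_k\to P^e$, $H_k\to H^e$, and $\beta_k\to\beta^e$ as $k\to-\infty$; the last of these is obtained via a separate technical Claim about time-varying linear recursions $x_{t+1}=A_tx_t+\eta$ with $A_t\to A$ Schur. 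Part~iii) then upper bounds the controller's average cost by $\lambda^e$ using Theorem~\ref{thm: DP solution to LQT}. Your route instead invokes the standard average-cost verification theorem directly: once Bellman holds, telescoping gives both the lower bound for arbitrary policies and equality for the stated stationary policy in one step, bypassing the finite-horizon limit argument and the convergence Claim entirely. Your approach is more self-contained; the paper's approach has the side benefit of explicitly tying the stationary quantities $P^e,\beta^e,H^e$ to the finite-horizon recursions in Theorem~\ref{thm: DP solution to LQT}, which is used elsewhere (e.g., Lemma~\ref{lem: uniform bdd on beta e}). One small point to tighten in your write-up: the lower-bound step needs $\liminf_N h^e(x_N)/N\le 0$ along any policy with finite average cost; your parenthetical about unbounded $x_N$ forcing divergent cost is the right idea, and can be made precise via $\|x_N\|\le \|A\|\,\|x_{N-1}\|+\|B\|\,\|u_{N-1}\|$ together with $Q,R>0$.
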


\begin{proof}[Proof of Proposition \ref{prop: opt avg cost LQT}]
It is easy to see that the formulas of $\lambda^e$, $h^e(x)$, and the optimal controller are  the limits of the corresponding formulas or the limiting solutions to the corresponding iterative equations  under fixed $Q, R, \theta$ in Theorem \ref{thm: DP solution to LQT}. However, to formally prove these formulas, we still need to prove the existence of the limits, which is the focus of the following proof. In particular, the proof consists of three parts: i) verify the formula of the optimal average cost $\lambda^e$, ii) verify the formula of the  bias function $h^e(x)$, iii) verify the formula of the optimal controller.

\textit{Part i): Verify the formula of $\lambda^e$.} 
		Consider a finite horizon LQT problem:
	\begin{align*}
	\min_{x_t,u_t}& \ \ \frac{1}{2} \sum_{t=0}^{N-1} \left[ (x_t-\theta)^\top  Q(x_t-\theta)+ u_t^\top  R u_t \right] \\
	\text{s.t. } \ & \ x_{t+1}=Ax_t + Bu_t, \quad \quad t=0, \dots, N-1
	\end{align*}
	Given an initial state $x_0$, by Theorem \ref{thm: DP solution to LQT}, the total optimal cost in $N$ time steps is 
	$$J_N^*(x_0) =\frac{1}{2} (x_{0}-\beta_{0})^\top  P_{0}(x_{0}-\beta_{0})+\frac{1}{2} \sum_{k=0}^{N-1}  (A\theta  - \beta_{k+1})^\top  H_k (A\theta  - \beta_{k+1})$$
	If we can show that  $\beta_k \to \beta^e$ and $P_k\to P^e$ and $H_k\to H^e$ as $k \to -\infty$, then, consequently, we will have $\frac{1}{2} (A\theta  - \beta_{k+1})^\top  H_k (A\theta  - \beta_{k+1}) \to \frac{1}{2}  (A\theta  - \beta^e)^\top  H^e (A\theta  - \beta^e)$ as $k\to -\infty$, and  bounded $\frac{1}{2} (x_{0}-\beta_{0})^\top  P_{0}(x_{0}-\beta_{0})$ for fixed $x_0$.  Then the formula of the optimal average cost in infinite horizon can be proved by   
	\begin{align*}
	\lambda^e &=  \lim_{N \to +\infty} \left[\frac{1}{N} \min_{x_{t+1}=Ax_t + Bu_t}\left( \frac{1}{2} \sum_{t=0}^{N-1} \left( (x_t-\theta)^\top  Q(x_t-\theta)+ u_t^\top  R u_t \right) \right) \right]\\
	&=
	\lim_{N \to +\infty}\left[\frac{1}{N}\left(\frac{1}{2} (x_{0}-\beta_{0})^\top  P_{0}(x_{0}-\beta_{0})+\frac{1}{2} \sum_{k=0}^{N-1}  (A\theta  - \beta_{k+1})^\top  H_k (A\theta  - \beta_{k+1})\right)\right]\\
	&=\frac{1}{2}(A\theta-\beta^e)^\top  H^e(A\theta-\beta^e),
	\end{align*}
		Therefore, it suffices to prove $\beta_k \to \beta^e$, $P_k\to P^e$ and $H_k\to H^e$ as $k \to -\infty$. 
		
		By Proposition 4.4.1 \cite{bertsekas3rddynamic}, $P_k \to P^e$ as $k \to -\infty$. Then, $M_k \to M^e$  as $k \to -\infty$ since $M_k$ is a continuous function of $P_k$ by noticing that the matrix inverse operator is continuous when the matrix is invertible. Similarly, $H_k \to H^e$ as $k \to -\infty$ since $H_k$ is a continuous function of $M_k$. In addition, $K_k\to K^e$, and $K_k^\alpha \to K^{\alpha}$ and $K'_k \to K'$ as $k \to -\infty$ since $K_k, K_k^\alpha, K'_k$ are continuous functions of $P_k$. 
		
		To show $\beta_k \to \beta^e$, we only need to show 
		$\alpha_k \to \alpha^e$ as $k \to -\infty$ since $\beta_k = P_k^{-1}\alpha_k$. $\alpha_k$ satisfies the recursive equation $\alpha_k= Q\theta +(A-BK_{k+1})^\top \alpha_{k+1}$. Since $(A-BK_k)^\top  \to (A-BK^e)^\top $ as $k \to -\infty$ and $(A-BK^e)^\top$ is a stable matrix, by the claim below, we can show $\alpha_k \to \alpha^e$ as $k \to -\infty$. Then, the proof of Part i) is completed.
		
		\nbf{Claim: }
		\textit{If $A_t\to A$ and $A$ is stable, then the state of the system $x_{t+1}=A_t x_t +\eta$  will converge to $x^s$, where $x^s=Ax^s+\eta$, for any bounded initial value $x_0$.}
		
The proof of the claim lemma is provided at the end of this subsection.

\nit{Part ii): Verify $h^e(x)$'s formula.}
The proof is by showing the Bellman equations hold under the formulas of $h^e(x)$ and $\lambda^e$ provided in the statement of the lemma. 
\begin{align*}
 \min_u & \left[\frac{1}{2} (x-\theta)^\top  Q(x-\theta)+ \frac{1}{2}u^\top  R u + \frac{1}{2}(Ax+Bu-\beta^e)^\top  P^e(Ax+Bu-\beta^e)\right]\\
  = \ &\frac{1}{2} (x-\theta)^\top  Q(x-\theta)+ \frac{1}{2} (Ax-\beta^e)^\top M^e (Ax-\beta^e)\\
 &+\min_u \frac{1}{2}(u+ K^e x-K'\beta^e)^\top  (R+B^\top P^e B)( u +K^e x-K'\beta^e)\\
 = \ & \frac{1}{2} (x-\theta)^\top  Q(x-\theta) + \frac{1}{2}(Ax -\beta^e)^\top  M^e(Ax -\beta^e) \\
 = \ & \frac{1}{2}(A\theta-\beta^e)^\top  H^e(A\theta-\beta)+ \frac{1}{2}(x-\beta^e)^\top  P^e(x-\beta^e)
\end{align*}
where the last equality is by $Q+A^\top M^e A= P^e$,  $\beta^e = (P^e)^{-1}\alpha^e$,  $\alpha^e = Q\theta +(A-BK^e)^\top \alpha^e$ and the formulas of $K^e, M^e$.


\nit{Part iii): Verify the formula of the optimal controller.} We prove $u=-K^e x +K'\beta^e$ is the optimal controller  by showing that the average cost by implementing this controller is no more than the optimal average cost $\lambda^e$. Let $x_t,u_t$ be the state and control at $t$ by implementing the controller $u=-K^e x +K'\beta^e$.
	\begin{align*}
&\frac{1}{N}\left( \frac{1}{2} \sum_{t=0}^{N-1} \left[ (x_t-\theta)^\top  Q(x_t-\theta)+ u_t^\top  R u_t \right]\right) \\
&\leq \frac{1}{N}\left(\frac{1}{2} \sum_{t=0}^{N-1} \left[ (x_t-\theta)^\top  Q(x_t-\theta)+ u_t^\top  R u_t \right]+ \frac{1}{2}(x_N-\beta^e)^\top P^e(x_N-\beta^e)\right)\\
 &= \frac{1}{N}\left(\frac{1}{2} (x_{0}-\beta^e)^\top  P^e(x_{0}-\beta^e)+\frac{1}{2} \sum_{k=0}^{N-1}  (A\theta  - \beta^e)^\top  H^e (A\theta  - \beta^e)\right)
\end{align*}
where the last equality is by Theorem \ref{thm: DP solution to LQT}.
Taking $N\to +\infty$ on both sides, we have
\begin{align*}
\lim_{N\to +\infty}\frac{1}{N} \frac{1}{2} \sum_{t=0}^{N-1} \left[ (x_t-\theta)^\top  Q(x_t-\theta)+ u_t^\top  R u_t \right] 
\leq \frac{1}{2}  (A\theta  - \beta^e)^\top  H^e (A\theta  - \beta^e)=\lambda^e
\end{align*}
This completes the proof.
\end{proof}

\nbf{Proof of Claim:}
Define the error term $d_t =x_t-x^s$. The dynamics of $d_t$ is $d_{t+1}=Ad_t+w_t$, where $w_t=(A_t-A)(d_t+x^s)$. It suffices to show $d_t \to 0$ as $t\to +\infty$. In the following, we will first prove two facts, based on which we prove $d_t \to 0$.

\nbf{Fact 1:} \textit{Consider a stable matrix $A$ and a sequence of uniformly bounded vectors: $\| v_t \|_2 \leq D$ for all $t$. There exists a constant $c_6>0$ determined by $A$, such that, for any $k=1, 2, \dots,$ 
$$ \| \sum_{t=0}^{k-1} A^t v_t \|_2 \leq c_6 D.$$}

\nbf{Proof of Fact 1:} This is a consequence of the fact that exponential stability implies bounded-input-bounded-output stability. To see this, 	consider a system $x_{t+1}=Ax_t+u_t$ with $x_0=0$. Since $A$ is stable, the system is exponentially stable. By Theorem 9.4 \cite{hespanha2018linear}, the exponential stability implies the bounded-input-bounded-output stability. Thus, there exists $c_6$ such that $\|x_k\|_2 \leq c_6 D$ for any $k$ and  any  input sequence satisfying $\|u_t \|_2 \leq D$ for all $t$. 

For any $k\geq 0$, consider inputs $u_t=v_{k-1-t}$ for $0 \leq t \leq k-1$, then $x_k= \sum_{t=0}^{k-1} A^{t}v_t$. Since $\|u_t \|\leq D$,  we have
$    \|x_k\| \leq c_6 D$, which completes the proof.
\qed 

\vspace{16pt}

\nbf{Fact 2:} \textit{There exists a constant $D>0$, such that $\max_{t\geq 0}(\|x^s\|, \|d_t \|)\leq D$.}

\nbf{Proof of Fact 2:}
Since $A_t\to A$, for  $\epsilon_1= 1/(4c_6)$, there exists $N_1$, such that when $t\geq N_1$, $\|A_t-A\| \leq \epsilon_1$. 
Since $A$ is stable, we have $A^t  \to 0$, so for  $\epsilon_2=1/2$, there exists $N_2$, such that when $t>N_2$, $\|A^t \| \leq \epsilon_2$.

	 Let $D=\max(\|d_0\|, \dots, \|d_{N_1+N_2}\|, \|x^s\|)$. By definition, $\|d_{t}\| \leq D$ for $t\leq N_1+N_2$. We can show  $\|d_{N_1+N_2+1}\| \leq D$ in the following.
	\begin{align*}
	\|d_{N_1+N_2+1}\|&=\| A^{N_2+1}d_{N_1}+ w_{N_1+N_2} +Aw_{N_1+N_2-1}+\dots + A^{N_2}w_{N_1}\|\\
	 &\leq \|A^{N_2+1}\|_2 D + \| w_{N_1+N_2} +Aw_{N_1+N_2-1}+\dots + A^{N_2}w_{N_1}\|\\
	&  \leq \epsilon_2 D + c_6 \max_{N_1\leq k \leq N_1+N_2} \|w_k\|\\
	&  \leq \epsilon_2 D + 2c_6 \epsilon_1 D  = (1/2+1/2)D=D
	\end{align*}
	where the second inequality is by Fact 1 and the definitions of $\epsilon_2$, and the third inequality is by $w_k= (A_k-A)(d_{k}+ x^s)$, $k\geq N_1$, and the definitions of $D$ and $\epsilon_1$. 
	
	It can be shown by induction that $\|d_t \|\leq D$ for any $t \geq N_1+N_2+1$ in the same way, which completes the proof. 	\qed


\vspace{16pt}

\nbf{Prove $d_t \to0$.}
We will show that for any $\epsilon_3>0$, there exists $N_3$, such that $\|d_t\|_2\leq \epsilon_3$ when $t> N_3$. The proof is very similar to the proof of Fact 2.

Since $A_t\to A$, when $\epsilon'_1= \epsilon_3/(4c_6D)$, there exists $N'_1$, such that when $t\geq N'_1$, $\|A_t-A\| \leq \epsilon'_1$, where $D$ is defined in Fact 2. 
Since $A$ is stable, we have $A^t  \to 0$, so when $\epsilon'_2= \epsilon_3/(2D)$, there exists $N'_2$, such that when $t>N'_2$, $\|A^t \| \leq \epsilon'_2$. 
Let $N_3=N'_1+N'_2$. When $t>N_3$,
\begin{align*}
\|d_{t+1}\|&= \| A^{t-N'_1+1}d_{N'_1}+ w_t +Aw_{t-1}+\dots + A^{t-N'_1}w_{N'_1}\| \\
 &\leq \|A^{t-N'_1+1}\| D + \| w_t +Aw_{t-1}+\dots + A^{t-N'_1}w_{N'_1}\|_2\\
& \leq \epsilon'_2 D + c_6 \max_{N'_1\leq k \leq t} \|w_k\|\\
&\leq \epsilon'_2 D + 2c_6 \epsilon'_1 D = (1/2+1/2)\epsilon_3=\epsilon_3
\end{align*}
	where the second inequality is by Fact 1 and the definitions of $\epsilon_2$, and the third inequality is by $w_k= (A_k-A)(d_{k}+ x^s)$, $k\geq N_1$, $\max_{k\geq 0}(\|d_k\|, \|x^s\|)\leq D$, and the definition of $\epsilon'_1$. 

This completes the proof of the claim. \qed

\subsection{Proof of Lemma \ref{lem: bound xt*}. }
Let $x_t^*, u_t^*$ denote the optimal state and the optimal control input at time $t$ respectively. By Theorem \ref{thm: DP solution to LQT}, the optimal controller is $u_t^*=-K_t x_t^* +K_t^\alpha \alpha_t$. For ease of notation, define
$$ D_t \coloneqq A-BK_t.$$
Then, the dynamical system of $x_t^*$ can be represented as 
$$x_{t+1}^* = D_t x_t^* + BK_t^\alpha \alpha_{t+1}.$$

\textbf{Proof outline: } We will prove $x_t^*$ is bounded by three steps: 1) show that system $x_{t+1}=D_t x_t$ is exponentially stable, 2) show that $BK_t^\alpha \alpha_{t+1}$ is bounded, 3) show $x_t^*$ is bounded by the fact that exponentially stable systems are bounded-input-bounded-output stable.

\nit{Step 1: show $x_{t+1}=D_t x_t$ is exponentially stable by a Lyapunov function.}

\begin{lemma}[Exponential stability]\label{lem: exp stable}Consider dynamical system $x_{t+1}=D_t x_t$.
	Define the state transition matrix: $$\Phi(t,t_0)=D_{t-1}\cdots D_{t_0}$$
	for $t\geq t_0$, and $\Phi(t,t)=I$. 
	For any $N$, any $0\leq t_0 \leq N$ $t_0\leq t \leq N$, any $Q_t \in \Q, R_t \in \Rc, Q_N \in \Pc$, and for any $x_{t_0}$, the system is exponentially stable, i.e.
	\begin{align}
	 	\|\Phi(t,t_0) \| &\leq c_7 c_2^{t-t_0} 
	\end{align} 
	where $c_7=\sqrt{\frac{\upsilon_{max}(\bar P)}{\upsilon_{min}(\underline P)}}$, $c_2=\sqrt{1-\frac{\mu_f}{\upsilon_{max}(\bar P)}}\in [0,1)$.
\end{lemma}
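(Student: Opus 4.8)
The plan is to use the Riccati matrix $P_t$ from Theorem~\ref{thm: DP solution to LQT} to build a time-varying quadratic Lyapunov function $V_t(x)=x^\top P_t x$ for the closed-loop system $x_{t+1}=D_t x_t$, and to extract a \emph{uniform} exponential bound from the uniform two-sided bound $\underline P \le P_t \le \bar P$ proved in Lemma~\ref{lem: Pt in Pbar Punderline}. This uniformity is exactly what makes the argument go through: a time-varying linear system need not be exponentially stable merely because each $D_t$ is individually stable, so one cannot argue via eigenvalues of a single matrix, and the constants $c_7,c_2$ must be certified independent of $N,t_0,t$.

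First I would establish the closed-loop Riccati identity
\[
D_t^\top P_{t+1} D_t = P_t - Q_t - K_t^\top R_t K_t ,
\]
obtained by expanding $(A-BK_t)^\top P_{t+1}(A-BK_t)$, substituting $K_t=(R_t+B^\top P_{t+1}B)^{-1}B^\top P_{t+1}A$, and simplifying with the definitions $M_t=P_{t+1}-P_{t+1}B(R_t+B^\top P_{t+1}B)^{-1}B^\top P_{t+1}$ and $P_t=Q_t+A^\top M_t A$; this is a routine completion-of-squares computation. Since $R_t>0$ we have $K_t^\top R_t K_t\ge 0$, so along any trajectory of $x_{t+1}=D_t x_t$,
\[
x_{t+1}^\top P_{t+1} x_{t+1} = x_t^\top D_t^\top P_{t+1}D_t x_t \le x_t^\top P_t x_t - x_t^\top Q_t x_t .
\]
By Lemma~\ref{lem: Pt in Pbar Punderline}, $Q_t\ge \underline Q=\mu_f I_n$ and $P_t\le \bar P$, hence $x_t^\top Q_t x_t \ge \mu_f\|x_t\|^2 \ge \frac{\mu_f}{\upsilon_{max}(\bar P)} x_t^\top P_t x_t$, which gives
\[
x_{t+1}^\top P_{t+1} x_{t+1} \le \Big(1-\frac{\mu_f}{\upsilon_{max}(\bar P)}\Big) x_t^\top P_t x_t = c_2^2\, x_t^\top P_t x_t .
\]
Here $c_2\in[0,1)$: the DARE gives $\bar P\ge \bar Q=l_f I_n\ge \mu_f I_n$, so $\upsilon_{max}(\bar P)\ge \mu_f$, while $\mu_f>0$ forces $c_2<1$.

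Iterating this inequality from $t_0$ to $t$ yields $x_t^\top P_t x_t \le c_2^{2(t-t_0)} x_{t_0}^\top P_{t_0} x_{t_0}$. Writing $x_t=\Phi(t,t_0)x_{t_0}$ and using $P_t\ge \underline P$ on the left (with $\upsilon_{min}(\underline P)>0$, since $\underline P=P^e(\underline Q,\underline R)$ is positive definite) and $P_{t_0}\le \bar P$ on the right, we obtain
\[
\|\Phi(t,t_0)x_{t_0}\|^2 \le \frac{\upsilon_{max}(\bar P)}{\upsilon_{min}(\underline P)}\, c_2^{2(t-t_0)}\|x_{t_0}\|^2 ,
\]
and taking the supremum over $x_{t_0}$ gives $\|\Phi(t,t_0)\|\le c_7 c_2^{t-t_0}$ with $c_7=\sqrt{\upsilon_{max}(\bar P)/\upsilon_{min}(\underline P)}$, as claimed. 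There is no genuinely hard step; the only points requiring care are getting the Riccati identity exactly right and consistently invoking Lemma~\ref{lem: Pt in Pbar Punderline} so that all constants are independent of $N$, $t_0$, and $t$.
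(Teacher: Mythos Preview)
Your proposal is correct and follows essentially the same route as the paper: build the time-varying Lyapunov function $x^\top P_t x$, use the closed-loop Riccati identity $D_t^\top P_{t+1}D_t = P_t - Q_t - K_t^\top R_t K_t$ together with the uniform bounds $\underline P\le P_t\le \bar P$ from Lemma~\ref{lem: Pt in Pbar Punderline} to get a uniform contraction factor $c_2^2$, and then iterate. The only trivial nit is that $Q_t\ge \mu_f I_n$ comes directly from the standing assumption $Q_t\in\Q$ rather than from Lemma~\ref{lem: Pt in Pbar Punderline}.
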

\begin{proof}
We prove the exponential stability by constructing a Lyapunov function: $L(t, x_t)=x_t^\top P_t x_t$ for $t\geq 0$. 

\nbf{Claim: } \textit{For any $x_t$, the Lyapunov function satisfies
$$\upsilon_{min}(\underline P)\|x_t\|_2^2 \leq L(t,x_t)\leq \upsilon_{max}(\bar P)\|x_t\|^2, \quad
	 L(t+1, D_t x_t)-L(t,x_t)\leq -\mu_f \|x_t\|_2^2.$$
	 where $\upsilon_{max}(\cdot)$ and $ \upsilon_{min}(\cdot)$ denote the maximum and minimum eigenvalues of a matrix  respectively.}
	 
\nbf{Proof of Claim:}
	By Lemma \ref{lem: Pt in Pbar Punderline}, $P_t \in \Pc$, so
	$ \upsilon_{min}(\underline P) I_n \leq \underline P \leq P_t \leq \bar P \leq \upsilon_{max}(\bar P)I_n$.
	Thus, for any $x_t$, 
	$$\upsilon_{min}(\underline P)\|x_t\|_2^2 \leq L(t,x_t)=x_t^\top  P_t x_t\leq\upsilon_{max}(\bar P)\|x_t\|^2$$
	
Besides,
	\begin{align*}
	L(t+1, D_t x_t)-L(t,x_t)&= x_t^\top D_t^\top P_{t+1}D_t x_t -x_t^\top  P_t x_t\\
	& = x_t^\top (D_t^\top P_{t+1}D_t-P_t)x_t\\
	&= x_t^\top (-Q_t-K_t^\top R_tK_t)x_t \\
	& \leq -x_t^\top \underline Qx_t 
	= -\mu_f \|x_t\|^2 
	\end{align*}
	where the third equality is by Theorem \ref{thm: DP solution to LQT}, the first inequality and the last equality are by $Q_t+K_t^\top R_tK_t\geq Q_t \geq \underline Q=\mu_f I_n$.
	\qed
	
	By the claim above, 
	\begin{align*}
	L(t+1, x_{t+1})-L(t,x_t)\leq -\mu_f \|x_t\|_2^2 \leq -\frac{\mu_f}{\upsilon_{max}(\bar P)}L(t,x_t)
	\end{align*}
	Thus, $ L(t+1, x_{t+1}) \leq c_2^2 L(t,x_t)$ where $c_2= \sqrt{1-\frac{\mu_f}{\upsilon_{max}(\bar P)}}$. Here, $c_2$ is well-defined because $0\leq \mu_f I_n \leq \bar Q \leq \bar P\leq \upsilon_{max}(\bar P)$.

	For any $t_{t_0}$ and any $x_{t_0}$, it is easy to verify that the state $x_t$ satisfies $x_t=\Phi(t, t_{t_0})x_{t_0}$. Therefore, 
	\begin{align*}
	    \|\Phi(t, t_0)\|&= \max_{x_{t_0}\not =0} \frac{\|x_t\|}{\|x_{t_0}\|}\\
	    & \leq \max_{x_{t_0}\not =0}\sqrt{ \frac{\upsilon_{max}(\bar P)}{\upsilon_{min}(\underline P)}\frac{L(t,x_t)}{L(t_0, x_{t_0})} }\\
	    & \leq \sqrt{\frac{\upsilon_{max}(\bar P)}{\upsilon_{min}(\underline P)}} c_2^{t-t_0}
	\end{align*}
	\end{proof}

\nit{Step 2: show that $BK_t^\alpha \alpha_{t+1}$ is bounded.} We will show that 
\begin{equation}
    \| \alpha_t \| \leq \frac{c_7}{1-c_2}\upsilon_{max}(\bar P)\bar \theta\eqqcolon \bar \alpha, \qquad \| BK_t^\alpha \alpha_t\|_2 \leq \|B\|_2^2 \frac{\bar \alpha}{\mu_g}.\label{equ: bdd for alphat}
\end{equation} 

By Theorem \ref{thm: DP solution to LQT}, $\alpha_t$ satisfies the dynamical system $\alpha_t= D_t^\top \alpha_{t+1}+ Q_t\theta_t$, with initial condition $\alpha_N=Q_N\theta_N$. As a result, we can write $\alpha_t$ in terms of $\theta_s$ and the transition matrix $\Phi(t, s)$ as follows
\begin{align*}
	\alpha_t& =Q_t\theta_t + D_t^\top  Q_{t+1}\theta_{t+1}+ \dots+ D_t^\top  \dots D_{N-1}^\top  Q_N\theta_{N}\\
	&=\Phi(t,t)^\top Q_t\theta_t+ \Phi(t+1,t)^\top  Q_{t+1}\theta_{t+1}+ \dots+ \Phi(N,t)^\top  Q_N\theta_{N}
	\end{align*}
Then, the bound of $\alpha_t$ can be derived as follows.
	\begin{align*}
	\|\alpha_t\|& \leq \|\Phi(t,t)^\top \|\|Q_t\theta_t\|+ \dots+  \|\Phi(N,t)^\top \|\| Q_N\theta_{N}\|\\
	& = \|\Phi(t,t)\|\|Q_t\theta_t\|+ \dots+  \|\Phi(N,t)\|\| Q_N\theta_{N}\| \\
	& \leq c_7 \upsilon_{max}(\bar P) \bar \theta + \dots +  c_7c_2^{N-t} \upsilon_{max}(\bar P)\bar \theta  \\
	& \leq c_7 \upsilon_{max}(\bar P)\bar \theta  \frac{1}{1-c_2}=\bar \alpha
	\end{align*}
	where the second inequality is by Lemma \ref{lem: exp stable}, $Q_t \leq \bar Q \leq \bar P$ and $Q_N \in \Pc$. 
	
	Consequently, 
		\begin{align*}
	\|BK_t^\alpha \alpha_t\| & = \|B(R_t + B^\top  P_{t+1}B)^{-1} B^\top \alpha_t \|\\ &\leq \|B\|^2 \|(R_t + B^\top  P_{t+1}B)^{-1}\|\|\alpha_t\| \\
	& \leq \|B\|^2 \frac{\bar \alpha}{\mu_g} 
	\end{align*}
	

\nit{Step 3: bound $x_t^*$.}
\begin{align*}
    \|x_t^*\| & = \| \Phi(t,t)BK_{t-1}^\alpha \alpha_t+\Phi(t,t-1)BK_{t-2}^\alpha \alpha_{t-1}+\dots \Phi(t,1)BK_{0}^\alpha \alpha_1\|\\
    & \leq c_7 \|B\|^2 \frac{\bar \alpha}{\mu_g}(1+c_2 + c_2^2 +\dots)= c_7 \frac{1}{1-c_2}\|B\|^2 \frac{\bar \alpha}{\mu_g}\eqqcolon \bar x
\end{align*}

\subsection{Proof of Lemma \ref{lem: uniform bdd on beta e}}
By Theorem \ref{thm: DP solution to LQT} and \eqref{equ: bdd for alphat}, $\|\beta _k \|= \|P_k^{-1}\alpha_k\|\leq \frac{1}{\upsilon_{min}(\underline P)}\bar \alpha$ for any $k$, any $N$ and any $Q_t \in \Q, R_t \in \Rc, Q_N \in \Pc$. When $Q_t=Q \in \Q, R_t=R=\Rc$ for all $t$, $\beta_k\to \beta^e$ as $k \to -\infty$ by the proof (Part i)) of Proposition \ref{prop: opt avg cost LQT}. Thus, $\|\beta^e \| \leq \frac{1}{\upsilon_{min}(\underline P)}\bar \alpha$. 
Define $\bar \beta=\max(\bar \theta, \frac{1}{\upsilon_{min}(\underline P)}\bar \alpha)$. This completes the proof.


\section{Simulation descriptions} \label{append: simulation}

\subsection{LQT}

The experiment settings are as follows. Let $A=[0,1; -1/, 5/6 ],B=[0;1]$, $N=30$. Consider diagonal $Q_t, R_t$ with diagonal entries i.i.d.  from $\text{Unif}[1,2]$. Let $\theta_t$ i.i.d. from $\text{Unif}[-10,10]$. 
 The stepsizes of RHGD and RHTM are based on the conditions in Theorem \ref{thm: RHGM regret bdd}.  The stepsizes of RHAG can be viewed as RHTM with $\delta_c=1/l_c, \delta_y=\delta_{\omega}=\frac{\sqrt \zeta-1}{\sqrt \zeta +1}$ and $\delta_z=0$. 

\subsection{Robotics tracking}
Consider the following discrete-time counterpart of the kinematic model 
\begin{subequations}
\begin{align*}
   x_{t+1} & = x_t + \Delta t \cdot\cos \delta_t \cdot v_t \\
   {y}_{t+1} & = y_t+\Delta t\cdot \sin \delta_t \cdot v_t\\
   {\delta}_{t+1} & =\delta_t + \Delta t\cdot w_t
\end{align*}
\end{subequations}
Thus we have 
\begin{subequations}
\begin{align*}
    \delta_t & = \arctan( \frac{y_{t+1}-y_{t}}{x_{t+1}-x_t} )\\
    v_t & = \frac{1}{\Delta t}\cdot \sqrt{(x_{t+1}-x_t)^2 + (y_{t+1}-y_t)^2} \\
    w_t& = \frac{\delta_{t+1} - \delta_t}{\Delta t} = \frac{1}{\Delta t}\cdot \left[   \arctan( \frac{y_{t+2}-y_{t+1}}{x_{t+2}-x_{t+1}} ) - \arctan( \frac{y_{t+1}-y_{t}}{x_{t+1}-x_t} )                 \right]
\end{align*}
\end{subequations}

So that $(\delta_t,v_t,w_t)$ can be expressed by the state variables $(x_t,y_t)$.

In the simulation, the given reference trajectory is 
\begin{subequations}
\begin{align*}
    x^r_t & = 16\sin^3(t-6)\\
    y^r_t& = 13\cos(t)-5 \cos(2t-12)-2\cos(3t-18)-\cos(4t-24)
\end{align*}
\end{subequations}
 As for the objective function, we set the cost coefficients as
\begin{align*}
    c_t = \begin{cases}
    0, & t = 0\\
    1, & \text{otherwise}
    \end{cases}\qquad
        c_t^v = \begin{cases}
    0, & t = N\\
    15\Delta t^2, & \text{otherwise}
    \end{cases}\qquad     c_t^w = \begin{cases}
    0, & t = N\\
    15\Delta t^2, & \text{otherwise}
    \end{cases}
\end{align*}
The discrete-time resolution for online control is 0.025 second, i.e., $\Delta t =0.025s$. When implementing each control decision, a much smaller time resolution of $0.001s$ is used to simulate the real motion dynamics of the robot.

\end{document}